\documentclass{amsart}

\usepackage{amsmath,amssymb,amsfonts}
\usepackage{esint, cancel}
\usepackage{amsthm}
\usepackage{mathrsfs}
\usepackage{geometry}
\usepackage{verbatim}
\usepackage{graphicx}
\usepackage{color}

\usepackage[colorlinks=true,urlcolor=blue, citecolor=red,linkcolor=blue,
linktocpage,pdfpagelabels, bookmarksnumbered,bookmarksopen]{hyperref}
\usepackage[hyperpageref]{backref}

\newtheorem{theorem}{Theorem}[section]
\newtheorem{lemma}[theorem]{Lemma}
\newtheorem{proposition}[theorem]{Proposition}
\newtheorem{corollary}[theorem]{Corollary}
\newtheorem*{main}{Main Theorem}
\newtheorem*{coro1}{Corollary 1}
\newtheorem*{coro2}{Corollary 2}
\theoremstyle{definition}
\newtheorem{remark}[theorem]{Remark}
\newtheorem{example}[theorem]{Example}
\newtheorem{definition}[theorem]{Definition}

\newtheorem*{ack}{Acknowledgments}

\numberwithin{equation}{section}

\author[Bozzola]{Francesco Bozzola}
\address[F.\ Bozzola]{Dipartimento di Scienze Matematiche, Fisiche e Informatiche
	\newline\indent
	Universit\`a di Parma
	\newline\indent
	Parco Area delle Scienze 53/a, Campus, 43124 Parma, Italy}
\email{francesco.bozzola@unipr.it}

\author[Brasco]{Lorenzo Brasco}
\address[L.\ Brasco]{Dipartimento di Matematica e Informatica
	\newline\indent
	Universit\`a degli Studi di Ferrara
	\newline\indent
	Via Machiavelli 35, 44121 Ferrara, Italy}
\email{lorenzo.brasco@unife.it}

\dedicatory{In loving memory of Silver Sirotti, 50 years after his death}

\date{\today}

\keywords{Poincar\'e inequality, inradius, capacity, Cheeger's constant.}
\subjclass[2010]{35P15, 35P30, 31C45}

\title{Capacitary inradius and Poincar\'e-Sobolev inequalities}

\begin{document}

\begin{abstract}
We prove a two-sided estimate on the sharp $L^p$ Poincar\'e constant of a general open set, in terms of a capacitary variant of its inradius.
This extends a result by Maz'ya and Shubin, originally devised for the case $p=2$, in the subconformal regime.
We cover the whole range of $p$, by allowing in particular the extremal cases $p=1$ (Cheeger's constant) and $p=N$ (conformal case), as well. We also discuss the more general case of the sharp Poincar\'e-Sobolev embedding constants and get an analogous result. Finally, we present a brief discussion on the superconformal case, as well as some examples and counter-examples.
\end{abstract}

\maketitle

\begin{center}
\begin{minipage}{10cm}
\small
\tableofcontents
\end{minipage}
\end{center}

\section{Introduction}

\subsection{Balls VS. Poincar\'e}
To start with, let us introduce the following two quantities
\[
\lambda_p(\Omega)=\inf_{\varphi\in C^\infty_0(\Omega)} \left\{\int_\Omega |\nabla \varphi|^p\,dx\, :\, \|\varphi\|_{L^p(\Omega)}=1\right\},
\]
and
\[
r_\Omega=\sup\Big\{r>0\, :\, \text{there exists a ball}\ B_r(x_0)\subseteq\Omega\Big\},
\]
which are associated to an open set $\Omega\subseteq\mathbb{R}^N$. Here the exponent $p$ is between $1$ and $\infty$, while the dimension $N$ will always be larger than or equal to $2$. The symbol $C^\infty_0(\Omega)$ indicates the space of infinitely differentiable functions, whose support is a compact subset of $\Omega$. 
\par
The constant $\lambda_p(\Omega)$ is the sharp $L^p$ Poincar\'e constant for functions ``vanishing at the boundary'' of $\Omega$. It may happen that this constant is zero: accordingly, the set $\Omega$ does not support such an inequality. This occurs for example whenever $\Omega$ contains balls of arbitrarily large radius. This fact can be made ``quantitative'' through the following upper bound
\begin{equation}
\label{1intro}
\lambda_p(\Omega)\le \lambda_p(B_1)\,\left(\frac{1}{r_\Omega}\right)^p.
\end{equation}
where $B_1=\{x\in\mathbb{R}^N\, :\, |x|<1\}$.
We thus obtain a first connection between these two quantities.
%In this paper, we pursue our investigation of \cite{BozBra} and inquire about the relation between this two objects.
We also know that for $p>N$, it is possible to prove the lower bound
\begin{equation}
\label{2intro}
\lambda_p(\Omega)\ge C_{N,p}\,\left(\frac{1}{r_\Omega}\right)^p,
\end{equation}
as well.
This is again valid for {\it every} open set $\Omega\subseteq\mathbb{R}^N$, see \cite[Theorem 1.4.1]{Po1}, \cite[Theorem 1.1]{Vit} and more recently \cite[Theorem 4.5]{BozBra} and \cite[Theorem 5.4]{BraPriZag2}. 
\par
On the contrary, when $1\le p\le N$ it is no more possible to bound $\lambda_p(\Omega)$ from below in terms of $r_\Omega$. There is a problem of ``removability'' in this case. In other words,
the geometric object $r_\Omega$ is affected by the removal of single points, while the latter are ``invisible'' sets for $\lambda_p(\Omega)$, in the range $1\le p\le N$. The typical counterexample to the lower bound is then given by $\mathbb{R}^N\setminus \mathbb{Z}^N$. Imposing to functions to vanish in an arbitrarily small neighborhood of the points of a lattice is not enough to get a $L^p$ Poincar\'e inequality, when $1\le p\le N$. 
\par
In more precise words, in general the quantity $\lambda_p(\Omega)$ is not affected by the removal of compact subsets $\Sigma\subseteq\Omega$ such that their {\it $p-$capacity relative to a ball $B_R(x_0)$} 
\[
\mathrm{cap}_p(\Sigma;B_R(x_0))=\inf_{\varphi\in C^\infty_0(B_R(x_0))}\left\{\int_{B_R(x_0)} |\nabla \varphi|^p\,dx\, :\, \varphi \ge 1 \text{ on } \Sigma\right\},\qquad \Sigma\Subset B_R(x_0),
\]
is zero (see for example \cite[Chapter 2]{Flu} or \cite[Chapter 2]{Maz} for more details on $p-$capacity).
\vskip.2cm\noindent
To restore the situation, without imposing any additional condition on the open sets, a natural idea would be that of replacing the {\it inradius} $r_\Omega$ with a ``relaxed'' version, which allows the balls to be contained in $\Omega$ only in a ``capacitary sense''. Thus, a first naive attempt would be that of replacing the usual inradius $r_\Omega$ with the following capacitary variant
\begin{equation}
\label{inradiocap}
\mathfrak{R}_{\Omega}:=\sup\Big\{r>0\, :\, \exists x_0\in\mathbb{R}^N\ \text{such that}\ \mathrm{cap}_p\left(\overline{B_r(x_0)}\setminus\Omega;B_{2r}(x_0)\right)=0\Big\}.
\end{equation}
Observe that it is necessary to use the concept of {\it relative} capacity, in order to include in the discussion the conformal case $p=N$, as well. Indeed, we recall that for every compact set $\Sigma\subseteq\mathbb{R}^N$ its {\it absolute $N-$capacity}, defined by 
\[
\mathrm{cap}_N(\Sigma):=\inf_{\varphi\in C^\infty_0(\mathbb{R}^N)}\left\{\int_{\mathbb{R}^N} |\nabla \varphi|^N\,dx\,:\, \varphi\ge 1\ \text{on}\ \Sigma\right\},
\]
is always zero,
due to the scale invariance of the $N-$Dirichlet integral (see \cite[pages 148--149]{Maz}). We point out that the will of including the case $p=N$ in the discussion is not a mere technicality: indeed, for $p=2$ the quantity $\lambda_p(\Omega)$ coincides with the bottom of the spectrum of the Dirichlet-Laplacian on $\Omega$ (see \cite[Chapter 10, Section 1.1]{BirSol}). Thus, the case $p=N=2$ is important in Spectral Theory.
\vskip.2cm\noindent
However, even by using the inradius defined by \eqref{inradiocap}, one could show that a lower bound on $\lambda_p(\Omega)$ is not possible, without any further assumption on the open set $\Omega$. We refer to Example \ref{exa:gamma0} below for a counter-example. The main problem in the definition \eqref{inradiocap} is the lack of some ``uniformity'' in the portion of complement $\mathbb{R}^N\setminus\Omega$ that this capacitary variant of the inradius can detect.
\par
In order to circumvent this problem, in \cite{MS} Maz'ya and Shubin proposed to work with the concept of {\it negligible set} (in the sense of Molchanov), for a fixed parameter $0<\gamma<1$ (see also \cite[Chapter 18, Section 7]{Maz}). This leads us to the following
\begin{definition}
Let $1\le p<\infty$ and $0<\gamma<1$, we say that a compact set $\Sigma\subseteq \overline{B_r(x_0)}$ 
is {\it $(p,\gamma)-$negligible} if 
\[
\mathrm{cap}_p(\Sigma;B_{2r}(x_0))\le \gamma\,\mathrm{cap}_p\left(\overline{B_r(x_0)};B_{2r}(x_0)\right).
\]
Accordingly, we consider the {\it capacitary inradius of $\Omega$}, defined as follows\footnote{
%Observe that the definition of $R_{p,\gamma}(\Omega)$ depends on both $p$ and $\gamma$. For notational simplicity, we omit to indicate the dependence on $p$. For the same reason, 
For ease of simplicity, we prefer to simply call it {\it capacitary inradius}, rather than {\it $(p,\gamma)-$capacitary inradius} or something similar.}
\begin{equation}
\label{inradiocapmod}
R_{p,\gamma}(\Omega):=\sup\Big\{r>0\, :\, \exists x_0\in\mathbb{R}^N\ \text{such that}\ \overline{B_r(x_0)}\setminus\Omega\ \text{is $(p,\gamma)-$negligible}\Big\}.
\end{equation}
see Figure \ref{fig:capin}. From its definition, we can immediately record the following two properties
\[
r_\Omega\le R_{p,\gamma}(\Omega),\ \text{for every}\ 0<\gamma<1,\qquad\text{and}\qquad \gamma \mapsto R_{p,\gamma}(\Omega) \ \text{is monotone non-decreasing}.
\]
\end{definition}
\begin{figure}
\includegraphics[scale=.25]{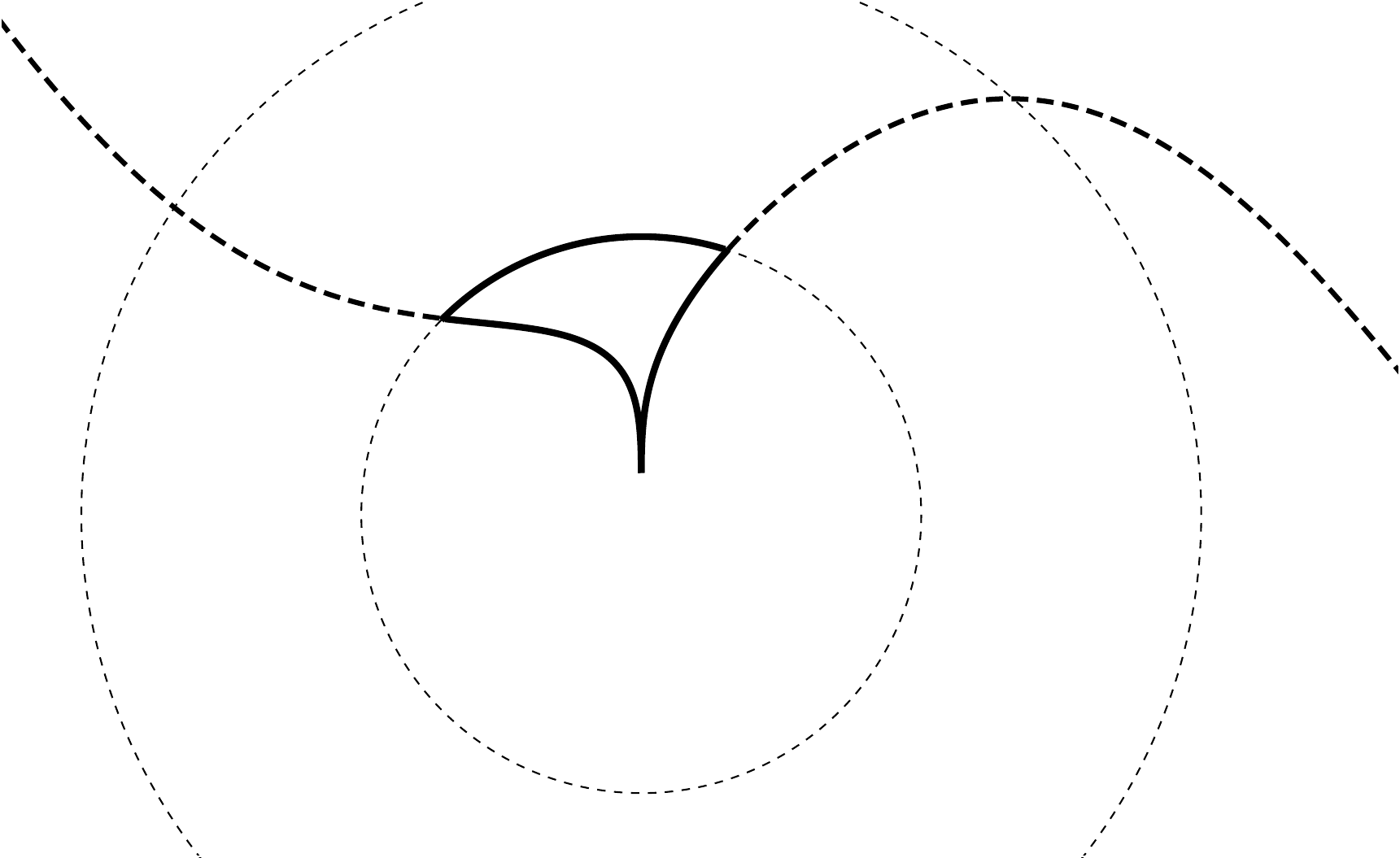}
\caption{Contoured by the bold line, the set $\overline{B_r(x_0)}\setminus\Omega$. For $\gamma$ small enough, its $p-$capacity is smaller than $\gamma$ times the capacity of the whole ball (the smaller one, in dashed line). Accordingly, this radius $r$ is a feasible competitor in the definition of the capacitary inradius. The largest ball in dashed line corresponds to the ``box'' $B_{2r}(x_0)$ which is used to compute the relative capacity.}
\label{fig:capin}
\end{figure}

\begin{remark}
The analysis of the paper \cite{MS} was confined to the case $p=2$. Moreover, the definition of capacitary inradius there contained is slightly different from ours \eqref{inradiocapmod}, since the authors use the absolute capacity
\[
\mathrm{cap}_2(\Sigma):=\inf_{\varphi\in C^\infty_0(\mathbb{R}^N)}\left\{\int_{\mathbb{R}^N} |\nabla \varphi|^2\,dx\,:\, \varphi\ge 1\ \text{on}\ \Sigma\right\}.
\]
For this reason, the case $p=N=2$ is not explicitly treated in \cite{MS}, in light of the discussion above. We will come back on a comparison between our result and those of \cite{MS} in a while.
\end{remark}
\subsection{Results of the paper}
The following two-sided estimate is the main result of the paper. This can be seen as an extension of \cite[Theorem 1.1]{MS}, to the case $p\not=2$. We refer to Remark \ref{rem:constant} below for a comment on the constant $C_{N,p,\gamma}$.
\begin{main}
Let $1 \leq p \leq N$, $0<\gamma<1$ and let $\Omega\subseteq\mathbb{R}^N$ be an open set. Then we have 
\[
\sigma_{N,p}\,\gamma\,\left(\frac{1}{R_{p,\gamma}(\Omega)}\right)^p\le \lambda_p(\Omega)\le C_{N,p,\gamma}\,\left(\frac{1}{R_{p,\gamma}(\Omega)}\right)^p,
\]
with the constant $C_{N,p,\gamma}$ which diverges to $+\infty$, as $\gamma$ goes to $1$. In particular, we have
\[
\lambda_p(\Omega)>0\qquad \Longleftrightarrow\qquad R_{p,\gamma}(\Omega)<+\infty,
\]
and the last condition does not depend on $0<\gamma<1$.
\end{main}
As in \cite{MS}, the proof of this result is constructive and thus the constants $\sigma_{N,p}$ and $C_{N,p,\gamma}$ are computable, in principle. However, since they are very likely not sharp and their explicit expression is not particularly pleasant, we prefer to avoid writing them in the statement above.
\vskip.2cm\noindent
Before going further, we wish to highlight a couple of consequences of our main result: the first one is a simple rewriting of the statement, in the case $p=1$.
Indeed, we recall that for $p=1$, the quantity $\lambda_p(\Omega)$ actually coincides with the so-called {\it Cheeger constant} of $\Omega$, defined by
\[
h(\Omega)=\inf\left\{\frac{\mathcal{H}^{N-1}(\partial E)}{|E|}\, :\, E\Subset \Omega\ \text{open set with smooth boundary}\right\},
\]
see for example \cite[Theorem 2.3.1]{Maz} for a proof of this fact. We refer to \cite{Leo} and \cite{Parini} for an introduction to the Cheeger constant and the interesting problems connected with it. 
\par
We get the following two-sided estimate, which deserves to be explicitly written. 
\begin{coro1}
Let $0<\gamma<1$ and let $\Omega\subseteq\mathbb{R}^N$ be an open set. Then we have 
\[
\sigma_{N,1}\,\gamma\,\frac{1}{R_{1,\gamma}(\Omega)}\le h(\Omega)\le C_{N,1,\gamma}\,\frac{1}{R_{1,\gamma}(\Omega)},
\]
with the constant $C_{N,1,\gamma}$ which diverges to $+\infty$, as $\gamma$ goes to $1$. In particular, we have
\[
h(\Omega)>0\qquad \Longleftrightarrow\qquad R_{1,\gamma}(\Omega)<+\infty,
\]
and the last condition does not depend on $0<\gamma<1$.
\end{coro1}
A second consequence concerns the so-called {\it $p-$torsion function} of an open set $\Omega$. This function, denoted by $w_\Omega$, is informally defined as the solution of 
\[
-\Delta_p w_\Omega=1,\qquad \text{in}\ \Omega,
\] 
with homogeneous Dirichlet boundary conditions on $\partial\Omega$. For the precise definition in the case of a general open set, we refer to \cite[Section 2]{BR}, for example. The importance of this function in the context of the theory of Sobolev spaces is encoded in the following equivalence
\[
\lambda_p(\Omega)>0\qquad \Longleftrightarrow\qquad w_{\Omega}\in L^\infty(\Omega).
\]
Actually, this equivalence can be made ``quantitative''. Indeed, from \cite[Theorem 1.3]{BR}  and \cite[Theorem 9]{vdBB}, we know that
\begin{equation}
\label{lambdaw}
1\le \lambda_p(\Omega)\,\|w_{\Omega}\|_{L^\infty(\Omega)}^{p-1}\le \mathbf{D}_{N,p}.
\end{equation}
We also refer to \cite[Proposition 6]{BE} and \cite[Lemma 4.1]{GS} for the leftmost estimate, in the case of smooth bounded domains.
\par
By joining this two-sided estimate with that of the Main Theorem, we get the following
\begin{coro2}
Let $1< p \leq N$, $0<\gamma<1$ and let $\Omega\subseteq\mathbb{R}^N$ be an open set. Then we have
\[
\left(\frac{1}{C_{N,p,\gamma}}\right)^\frac{1}{p-1}\,\Big(R_{p,\gamma}(\Omega)\Big)^\frac{p}{p-1}\le \|w_{\Omega}\|_{L^\infty(\Omega)}\le \left(\frac{\mathbf{D}_{N,p}}{\gamma\,\sigma_{N,p}}\right)^\frac{1}{p-1}\,\Big(R_{p,\gamma}(\Omega)\Big)^\frac{p}{p-1},
\]
where $\sigma_{N,p}$ and $C_{N,p,\gamma}$ are the same constants as in the Main Theorem.
\end{coro2}
\begin{remark}
For completeness, let us discuss the counterpart of the previous result, with the classical inradius $r_\Omega$ in place of $R_{p,\gamma}(\Omega)$. The lower bound holds for every open set.
Indeed, by the comparison principle for the $p-$Laplacian, for every ball $B_r(x_0)\subseteq\Omega$ we have
\[
w_\Omega(x)\ge w_{B_{r}(x_0)}(x)=\frac{p-1}{p}\,\frac{1}{N^\frac{1}{p-1}}\,\left(r^\frac{p}{p-1}-|x-x_0|^\frac{p}{p-1}\right)_+.
\]
By passing to the essential supremum and using the arbitrariness of the ball, we obtain the sharp lower bound
\[
\|w_\Omega\|_{L^\infty(\Omega)}\ge \frac{p-1}{p}\,\frac{1}{N^\frac{1}{p-1}}\,\Big(r_\Omega\Big)^\frac{p}{p-1}.
\]
The upper bound on the contrary is not always true, unless some restrictions are imposed on the open sets. Once again, removability issues can be held responsible for the failure. It is known to be true for {\it convex sets} and for {\it planar multiply connected sets}, for example. In the first case, this is contained in \cite[Theorem 1.2]{DPGG} (see also \cite[Corollary 5.3]{BraPriZag}). In the second case, it can be obtained by combining the rightmost inequality in \eqref{lambdaw}, with the lower bound on $\lambda_p(\Omega)$ given by \cite[Theorem 3.4]{BozBra}. The special case $p=2$ for an open simply connected subset of the plane was contained in \cite[Corollary 1, equation (0.6)]{BC}.
\end{remark}

\subsection{Some comments on the main result}
We fairly admit that the present paper is very much inspired to \cite{MS}. Indeed, it was our original intention to expand the analysis of \cite{MS}, shed some light on the methods therein used and extend the results to the general case of the $L^p$ Poincar\'e inequality (and more generally to $L^q-L^p$ Poincar\'e-Sobolev inequalities, see Section \ref{sec:6}). 
\par
We remark at first that a two-sided estimate like that of our Main Theorem, still valid for every $1\le p\le N$, was already contained in the old version of Maz'ya's book \cite{Maz85}: with a brave and careful inspection, one could trace it back to \cite[Theorem 11.4.1]{Maz85} (this is \cite[Theorem 15.4.1]{Maz} in the new version). To be more precise, the latter is concerned with a slight variant of the capacitary inradius $R_{p,\gamma}(\Omega)$ introduced above, defined by replacing balls with cubes\footnote{In the notation and terminology of \cite[Theorem 11.4.1]{Maz85} and \cite[Theorem 15.4.1]{Maz}, this is the quantity $D_{p,l}(\Omega)$ with $l=1$, called {\it $(p,l)-$inner diameter} (see \cite[Definition 10.2.2]{Maz85} or \cite[Definition 14.2.2]{Maz}, by taking $Q_d=\mathbb{R}^n$, with the notation there). In the aforementioned result, the author proved that
\[
D_{p,1}(\Omega)\lesssim C\lesssim D_{p,1}(\Omega),
\]
where the constant $C$ in \cite{Maz, Maz85} coincides with $(\lambda_p(\Omega))^{-1/p}$, in our notation.}. 
Apart for the use of cubes in place of balls, the key point which marks the big difference with both \cite[Theorem 1.1]{MS} and our result, is that \cite[Theorem 11.4.1]{Maz85} is proved under a {\it restriction on the negligibility parameter} $\gamma$. In other words, for the arguments used in \cite{Maz, Maz85} it is needed that
\[
0<\gamma\le \gamma_{N,p}<1, 
\]
with $\gamma_{N,p}$ explicit and exponentially decaying to $0$, as $N$ goes to $\infty$ (see \cite[equation (10.1.2)]{Maz85} or \cite[equation (14.1.2)]{Maz}).
\par 
Maz'ya and Shubin in their paper \cite{MS} dropped this restriction, at least in the quadratic case $p=2$. Our main result then permits to overcome this limitation on $\gamma$ {\it for the whole range of $p$}, as well. Moreover, at the same price, we can get the same type of two-sided estimate for the quantities 
\[
\lambda_{p,q}(\Omega):= \inf_{\varphi\in C^\infty_0(\Omega)} \left\{\int_\Omega |\nabla u|^p\,dx\, :\, \|u\|^p_{L^q(\Omega)}=1\right\},
\]
for every subcritical exponent $q>p$. %At the same time, we show that for $q<p$ only the upper bound is
\par
 Indeed, the main interest of both \cite[Theorem 1.1]{MS} and our Main Theorem lies in the fact that the results hold {\it for every} $0<\gamma<1$. This is quite remarkable, since as $\gamma$ gets closer and closer to $1$, a ball which is $(p,\gamma)-$negligible is admitted to catch more and more portion of the complement of $\Omega$. This means that $R_{p,\gamma}(\Omega)$ starts to keep less and less memory of $\Omega$: nevertheless, as far as $\gamma<1$, it carries an information which is still enough to assure the validity of the $L^p$ Poincar\'e inequality (and even of the $L^q-L^p$ Poincar\'e-Sobolev inequalities).
%Curiously enough, this connection was not very much stressed in the new edition of the book \cite{Maz}, where the results of the paper \cite{MS} are reported in Chapter 18. 
\vskip.2cm\noindent
Even if we follow quite closely \cite{MS}, this does not mean that the proof of the Main Theorem is just a straightforward transposition of that of \cite[Theorem 1.1]{MS}. For example, in the proof of the upper bound, Maz'ya and Shubin rely very much on the representation formula for the {\it capacitary potential}, i.e. the function attaining the minimum value
\[
\mathrm{cap}_2\left(\overline{B_r(x_0)}\setminus\Omega;B_{2r}(x_0)\right).
\]
Such a potential can be expressed in terms of the fundamental solution of the Laplacian (more precisely, in terms of the Green function, at least in our case which uses the relative capacity). It is probably superfluous to mention that this is not possible for $p\not=2$, due to the nonlinearity of the relevant equation. Whenever possible, we also tried to simplify certain technical points of the original paper and add some explanations. 
 \par
We now wish to make some comments on the proofs.
\begin{itemize}
\item {\it Lower bound}: we proceed quite similarly to Maz'ya and Shubin. The key point is the use of a Maz'ya-Poincar\'e inequality for functions in a cube or a ball, vanishing on a Dirichlet region with positive capacity (the prototype of this type of results is \cite[Theorem 14.1.2]{Maz}).
We partially amend this strategy, by using a variant of such an inequality for functions on cubes, but with the capacity of the Dirichlet region computed with respect to a ball (this is a sort of ``mixed'' strategy, taken from our recent paper \cite{BozBra}). This permits to get the result by a {\it tiling argument} with cubes, rather than by a {\it covering argument} with balls as in \cite{MS}. This simplifies the argument, to a certain extent (it is not necessary to take into account the dimensional-dependent multiplicity factor of the covering). This gives a constant which is quantitatively rougher than that of \cite{MS}, but it is qualitatively comparable in terms of $\gamma$, i.e. our lower bound still decays to $0$ linearly with $\gamma$, when this goes to $0$ (compare it with \cite[equation (3.19)]{MS}).
\vskip.2cm\noindent
\item {\it Upper bound}: this is the point that requires greater care, in order to allow the parameter $\gamma$ to be arbitrarily close to $1$. Here as well we follow Maz'ya and Shubin, but as remarked above a nonlinear approach is now needed to get (or to judiciously estimate) the sharp constant in a subtle $L^1-L^p$ Poincar\'e-type inequality, for $p\not=2$. Even if we are not able to get the explicit expression for the extremals of this inequality, by suitably using some integral identities we can determine the optimal constant. The expression of such a constant is a bit involved and difficult to handle: nevertheless, by using a monotonicity property of the relative $p-$capacity of balls, which can be seen as a weaker version of {\it Gr\"otzsch's lemma} (see Lemma \ref{lm:cazzatella} and Remark \ref{rem:cazzatella}), we can finally get an estimate of the sharp constant which is handy and good enough for our purposes. All this part is the content of Section \ref{sec:3}.
\par
At a technical level, we also avoid the delicate approximation argument used in \cite{MS}, to replace $\overline{B_r(x_0)}\setminus\Omega$ (which may be very rough) with a smoother set. This is needed in \cite{MS} so to work with a capacitary potential which is sufficiently smooth and exploit the fact that this is harmonic. 
Here, on the contrary, we work directly with $\overline{B_r(x_0)}\setminus \Omega$ and show that, in place of a capacitary potential of this set, it is sufficient to take any ``almost'' minimizer of the relative $p-$capacity (and by density, this can be taken as smooth as we wish). Thus, we can be dispensed with the use of the PDE and simply use the minimality (or almost minimality) property of the function. This simplifies the argument, at the price of a slight quantitative worsening of the constant. This is not a big deal, since in any case the constants involved in the two-sided estimate are not sharp, both in \cite{MS} and in our paper.
On the contrary, at a qualitative level, our final estimate in terms of $\gamma$ is as good as that of Maz'ya and Shubin (see Remark \ref{rem:constant} and compare with \cite[equation (4.16)]{MS}).
\end{itemize}

\subsection{Plan of the paper}
The main notation and basic definitions are settled in Section \ref{sec:2}. In particular, we recall some basic properties of the capacity of balls and prove a ``quantified'' monotonicity property of this capacity, as a function of the radius (see Lemma \ref{lm:cazzatella}). In Section \ref{sec:3}, we obtain an explicit sharp $L^1-L^p$  Poincar\'e-type constant on balls (see Proposition \ref{coro:sharp1p}): as previously explained, this will be a key ingredient in the proof of the upper bound of the Main Theorem. Incidentally, as a byproduct of our analysis, we obtain the explicit value of a curious Cheeger-type constant for a ball (see Corollary \ref{coro:sharp11} and Remark \ref{oss:cheegertype}). 
We then enter into the core of the paper with Section \ref{sec:4}: by using a tiling argument in combination with a Maz'ya-Poincar\'e--type inequality, we derive the lower bound of our Main Theorem. %for the first eigenvalue of the Dirichlet $p$-Laplacian in terms of the new notion of inradius, provided that $1 \leq p \leq N$. 
The proof of the upper bound can be found in the subsequent Section \ref{sec:5}.
%As well, the upper bound is obtained thanks to the estimates over annuli: this is the content of Section \ref{sec:5}. 
Particular attention is given to the quality of the constants involved in both the estimates. Section \ref{sec:6} is devoted to prove the extension of the Main Theorem to the case of general Poincar\'e-Sobolev embedding constants. In Section \ref{sec:7} we discuss the superconformal case $p>N$ and exhibit a sharp condition on $\gamma$ for having $R_{p,\gamma}(\Omega)=r_\Omega$. The paper is complemented by Appendix \ref{sec:A}, containing some counter-examples, aimed at clarifying some subtle aspects of the limit cases $\gamma=0$ and $\gamma\nearrow 1$, as well as to complete the discussion of Section \ref{sec:6} for the case $q<p$.

\begin{ack}
We thank Francesca Prinari for some useful conversations. The paper has been finalized during a one month staying of F.B. at the Laboratoire de Math\'ematiques of the Universit\'e Savoie Mont Blanc. The hosting institution and its facilities are kindly acknowledged. 
\par
F.\,B. is a member of the {\it Gruppo Nazionale per l'Analisi Matematica, la Probabilit\`a
e le loro Applicazioni} (GNAMPA) of the Istituto Nazionale di Alta Matematica (INdAM). F.\,B. has been financially supported by the joint Ph.D. program of the Universities of Ferrara, Modena \& Reggio Emilia and Parma. L.\,B. has been financially supported by the {\it Fondo di Ateneo per la Ricerca} {\sc FAR 2021} and the {\it Fondo per l'Incentivazione alla Ricerca Dipartimentale} {\sc FIRD 2022} of the University of Ferrara. 
\end{ack}

\section{Preliminaries}
\label{sec:2}

\subsection{Notation and basic definitions}

We will use the usual standard notations for $N-$dimensional balls and hypercubes, that is
\[
B_R(x_0)=\Big\{x\in\mathbb{R}^N\, :\, |x-x_0|<R\Big\},\qquad \text{for}\ x_0\in\mathbb{R}^N,\ R>0,
\]
and
\[
Q_R(x_0)=\prod_{i=1}^N (x_0^i-R,x_0^i+R),\qquad \text{for}\ x_0=(x_0^1,\dots,x_0^N)\in\mathbb{R}^N,\ R>0.
\]
When the center $x_0$ coincides with the origin, we will simply write $B_R$ and $Q_R$, respectively.
\par
For $1\le p<\infty$ and for an open set $E\subseteq\mathbb{R}^N$, we will denote by $W^{1,p}(E)$ 
the standard Sobolev space
\[
W^{1,p}(E)=\Big\{u\in L^p(E)\, :\, \nabla u\in L^p(E;\mathbb{R}^N)\Big\},
\]
endowed with its natural norm. The symbol $W^{1,p}_0(\Omega)$ will denote the closure of $C^\infty_0(\Omega)$ in $W^{1,p}(\Omega)$.
\begin{definition}
\label{defi:capacity}
Let $1\le p<\infty$, for every $E\subseteq\mathbb{R}^N$ open set and every $\Sigma\subseteq E$ compact set, we define the {\it $p-$capacity of $\Sigma$ relative to $E$} through the following minimization problem
\[
\mathrm{cap}_p(\Sigma;E)=\inf_{\varphi\in C^\infty_0(E)}\left\{\int_E |\nabla \varphi|^p\,dx\, :\, \varphi \ge 1 \text{ on } \Sigma\right\}.
\]
\end{definition}
\begin{remark}
\label{oss:altrefunzioni}
By using standard approximation methods, it is not difficult to see that the infimum above does not change, if we replace $C^\infty_0(E)$ by the space of Lipschitz functions, compactly supported in $E$. We indicate this space by $\mathrm{Lip}_0(E)$. We observe that, for every $\varphi\in \mathrm{Lip}_0(E)$ with $\varphi \ge 1$ on $\Sigma$, the new function
\[
\widetilde{\varphi}:=\min\{|\varphi|,1\},
\]
still belongs to $\mathrm{Lip}_0(E)$ and is such that 
\[
\int_E |\nabla \widetilde{\varphi}|^p\,dx\le \int_E |\nabla \varphi|^p\,dx,\qquad 0\le \widetilde\varphi\le 1\qquad \text{and}\qquad \widetilde\varphi =1 \text{ on } \Sigma.
\]
This shows that we also have the following equivalent characterization
\begin{equation}
\label{capalt}
\mathrm{cap}_p(\Sigma;E)=\inf_{\varphi\in \mathrm{Lip}_0(E)}\left\{\int_E |\nabla \varphi|^p\,dx\, :\, 0\le \varphi\le 1,\ \varphi =1 \text{ on } \Sigma\right\}.
\end{equation}
\end{remark}
%\begin{remark}
%	\label{cap-pot}
%	For $1 < p < \infty$ and if $E$ is an open bounded set, the Direct Method in the Calculus of Variations and the strict convexity of the functional which is minimized imply existence and uniqueness of a function \[P_{\Sigma, E} \in W_0^{1,p}(E), \] 
%	called the {\it $p$-capacitary potential of $\Sigma$ relative to $E$}, satisfying \[
%	\int_{E} |\nabla P_{\Sigma, E}|^p\,dx = \mathrm{cap}_p(\Sigma; E).
%	\]
%	Here, $W_0^{1,p}(E)$ indicates the {\it closure} of $C^\infty_0(E)$ in $W^{1,p}(E)$. 
%\end{remark}

\subsection{Capacity of balls}
For $N\ge 2$, we recall the expression for the $p-$capacity of a ball relative to a concentric ball (see \cite[page 148]{Maz}). Due to translation invariance, we can suppose that all the balls are centered at the origin.
We have to distinguish the cases $p=1$, $p\in(1,N)\cup(N,\infty)$ or $p=N$. For every $0<r<R$, this is given by\footnote{The reader should keep in mind that in \cite{Maz} the constant $\omega_N$ stands for the perimeter of $B_1(0)$, rather than for its volume. This explains the apparent difference with the formulas here given.}
\begin{equation}
\label{cappalla0}
\mathrm{cap}_1\left(\overline{B_r};B_{R}\right)=N\,\omega_N\,r^{N-1},
\end{equation}
\begin{equation}
\label{cappalla}
\mathrm{cap}_p\left(\overline{B_r};B_{R}\right)=N\,\omega_N\, \left|\frac{N-p}{p-1}\right|^{p-1}\,\frac{r^{N-p}}{\left|1-\left(\dfrac{r}{R}\right)^\frac{N-p}{p-1}\right|^{p-1}}%:=\frac{\Theta_{N,p}}{r^{p-N}}
, \qquad \text{ if } p\in(1,N)\cup(N,\infty),
\end{equation}
and 
\begin{equation}
\label{cappallaconf}
\mathrm{cap}_N\left(\overline{B_r};B_{R}\right)=N\, \omega_N\, \left(\log \left(\frac{R}{r}\right)\right)^{1-N}.
\end{equation}
For $p>N$, we can even take the limit as $r$ goes to $0$ and get
\begin{equation}
\label{cappunto}
\mathrm{cap}_p\left(\{0\};B_{R}\right)=N\,\omega_N\, \left(\frac{p-N}{p-1}\right)^{p-1}\,R^{N-p}, \qquad \text{ if } p>N.
\end{equation}
\begin{remark}
\label{oss:scaling}
We observe in particular that we have the following scaling relations
\[
\mathrm{cap}_p\left(\overline{B_r};B_{R}\right)=r^{N-p}\,\mathrm{cap}_p\left(\overline{B_1};B_{R/r}\right), \qquad \text{ if } 1\le p<\infty,
\]
\end{remark}
The following technical result will be useful. It is a sort of ``quantified'' monotonicity inequality for the relative $p-$capacity of balls, with a geometric remainder term.
\begin{lemma}
\label{lm:cazzatella}
Let $N\ge 2$ and $1<p\le N$, for every $0<r_1<r_2< R$ we have
\[
\frac{|B_{r_2}\setminus B_{r_1}|}{(\mathcal{H}^{N-1}(\partial B_{r_2}))^\frac{p}{p-1}} 
+\left(\frac{1}{\mathrm{cap}_p(\overline{B_{r_2}};B_R)}\right)^\frac{1}{p-1}\le \left(\frac{1}{\mathrm{cap}_p(\overline{B_{r_1}};B_R)}\right)^\frac{1}{p-1}.
\]
\end{lemma}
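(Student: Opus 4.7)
The plan is to reduce everything to a single integral representation of the capacity of a ball that works uniformly for $1<p\le N$. Specifically, either by solving the radial Euler-Lagrange equation $(\rho^{N-1}|\phi'|^{p-2}\phi')'=0$ for the $p$-capacitary potential on the annulus $B_R\setminus\overline{B_r}$, or by direct algebraic manipulation of the closed-form expressions (\ref{cappalla}) and (\ref{cappallaconf}), I would first establish the identity
\[
\left(\frac{1}{\mathrm{cap}_p(\overline{B_r};B_R)}\right)^{\frac{1}{p-1}} = \frac{1}{(N\omega_N)^{\frac{1}{p-1}}}\,\int_r^R s^{-\frac{N-1}{p-1}}\,ds, \qquad 0<r<R.
\]
This is what unifies the subconformal and conformal cases: the subconformal integral returns $\frac{p-1}{N-p}(r^{-(N-p)/(p-1)}-R^{-(N-p)/(p-1)})$, matching (\ref{cappalla}), while the case $p=N$ produces $\log(R/r)$, matching (\ref{cappallaconf}).

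The second step is subtraction. Applying this identity at $r=r_1$ and $r=r_2$, the inequality in the lemma becomes equivalent to
\[
\frac{|B_{r_2}\setminus B_{r_1}|}{\bigl(\mathcal{H}^{N-1}(\partial B_{r_2})\bigr)^{\frac{p}{p-1}}} \le \frac{1}{(N\omega_N)^{\frac{1}{p-1}}}\int_{r_1}^{r_2} s^{-\frac{N-1}{p-1}}\,ds,
\]
since the capacitary difference is precisely the integral on the right, up to the prefactor.

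The third step is a pointwise comparison under the integral sign. I would rewrite the numerator on the left using the coarea/layer-cake identity $|B_{r_2}\setminus B_{r_1}|=\int_{r_1}^{r_2}N\omega_N\, s^{N-1}\,ds$ and expand $\mathcal{H}^{N-1}(\partial B_{r_2})^{p/(p-1)}=(N\omega_N)^{p/(p-1)} r_2^{(N-1)p/(p-1)}$. After simplifying the common factor $(N\omega_N)^{-1/(p-1)}$, the inequality reduces to checking
\[
\frac{s^{N-1}}{r_2^{\frac{(N-1)p}{p-1}}} \le s^{-\frac{N-1}{p-1}}, \qquad s\in[r_1,r_2],
\]
which is equivalent to $(s/r_2)^{(N-1)p/(p-1)}\le 1$, a triviality since $s\le r_2$ and $(N-1)p/(p-1)\ge 0$ under $N\ge 2$, $p>1$.

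I do not anticipate any real obstacle: once the integral representation of $\mathrm{cap}_p(\overline{B_r};B_R)^{-1/(p-1)}$ is in hand, the rest is essentially bookkeeping of exponents. The only modest care is in unifying the cases $p<N$ and $p=N$ via the integral (rather than handling them separately by using formulas (\ref{cappalla}) and (\ref{cappallaconf}) one at a time, which would double the work and obscure the monotonicity structure that gives the lemma its geometric meaning).
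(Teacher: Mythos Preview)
Your proposal is correct and takes a genuinely different route from the paper's argument. The paper plugs in the explicit formulas for the capacities and performs successive algebraic reductions, introducing the new variables $t=(R/r_1)^N$, $s=(R/r_2)^N$ and $\alpha=(N-p)/(N(p-1))+1$, until the claimed inequality becomes exactly the tangent-line inequality $t^\alpha\ge s^\alpha+\alpha\,s^{\alpha-1}(t-s)$ for the convex function $\tau\mapsto\tau^\alpha$; the conformal case $p=N$ is then handled separately by letting $p\nearrow N$ in the subconformal inequality. You instead recognize the $p$-modulus $(\mathrm{cap}_p(\overline{B_r};B_R))^{-1/(p-1)}$ as the single integral $(N\omega_N)^{-1/(p-1)}\int_r^R s^{-(N-1)/(p-1)}\,ds$, which covers both cases at once, and then the inequality collapses to a pointwise comparison of integrands that is just $(s/r_2)^{(N-1)p/(p-1)}\le 1$. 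Your route is shorter, avoids the case distinction and the limit argument, and makes the subadditivity of the $p$-modulus over nested annuli (cf.\ the Remark following the lemma in the paper) completely transparent. The paper's approach, on the other hand, is more purely computational and does not require identifying the integral representation of the modulus.
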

\begin{proof}
The proof is just based on writing explicitly all the involved quantities and then using a convexity inequality. We start from the case $1<p<N$: by using \eqref{cappalla}, the claimed inequality is equivalent to
\[
\begin{split}
\frac{1}{(N\,\omega_N)^\frac{1}{p-1}}&\, \left(\frac{p-1}{N-p}\right)\,\frac{1-\left(\dfrac{r_1}{R}\right)^\frac{N-p}{p-1}}{\left(\dfrac{r_1}{R}\right)^\frac{N-p}{p-1}}\,\frac{1}{R^\frac{N-p}{p-1}}\\
&\ge \frac{1}{(N\,\omega_N)^\frac{1}{p-1}}\, \left(\frac{p-1}{N-p}\right)\,\frac{1-\left(\dfrac{r_2}{R}\right)^\frac{N-p}{p-1}}{\left(\dfrac{r_2}{R}\right)^\frac{N-p}{p-1}}\,\frac{1}{R^\frac{N-p}{p-1}}+\frac{1}{N\,(N\,\omega_N)^\frac{1}{p-1}}\,\frac{1}{r_2^\frac{N-p}{p-1}}\,\left(1-\left(\frac{r_1}{r_2}\right)^N\right).
\end{split}
\]
In turn, this is equivalent to the following inequality
\[
\begin{split}
\frac{1-\left(\dfrac{r_1}{R}\right)^\frac{N-p}{p-1}}{\left(\dfrac{r_1}{R}\right)^\frac{N-p}{p-1}}&\ge \frac{1-\left(\dfrac{r_2}{R}\right)^\frac{N-p}{p-1}}{\left(\dfrac{r_2}{R}\right)^\frac{N-p}{p-1}}+\frac{N-p}{N\,(p-1)}\,\left(\frac{R}{r_2}\right)^\frac{N-p}{p-1}\,\left(1-\left(\frac{r_1}{r_2}\right)^N\right),
\end{split}
\]
which is the same as
\[
\left(\dfrac{R}{r_1}\right)^\frac{N-p}{p-1}\ge \left(\dfrac{R}{r_2}\right)^\frac{N-p}{p-1}+\frac{N-p}{N\,(p-1)}\,\left(\frac{R}{r_2}\right)^\frac{N-p}{p-1}\,\left(1-\left(\frac{r_1}{r_2}\right)^N\right).
\]
This can be further rewritten as follows
\begin{equation}
\label{perseverance}
\left(\dfrac{R}{r_1}\right)^\frac{N-p}{p-1}\ge \left(\frac{N-p}{N\,(p-1)}+1\right)\,\left(\dfrac{R}{r_2}\right)^\frac{N-p}{p-1}-\frac{N-p}{N\,(p-1)}\,\left(\frac{R}{r_2}\right)^\frac{N-p}{p-1}\,\left(\frac{r_1}{r_2}\right)^N
\end{equation}
We now introduce the following notation
\[
t=\left(\dfrac{R}{r_1}\right)^N,\qquad s=\left(\dfrac{R}{r_2}\right)^N,\qquad \alpha=\frac{N-p}{N\,(p-1)}+1.
\]
In light of this notation, the above inequality \eqref{perseverance}  can be written as
\[
t^{\alpha-1}\ge \alpha\,s^{\alpha-1}-(\alpha-1)\,s^{\alpha-1}\,\frac{s}{t}.
\]
By multiplying both sides by the positive number $t$, the latter is equivalent to
\[
t^\alpha\ge \alpha\,s^{\alpha-1}\,t-(\alpha-1)\,s^\alpha,
\]
that is
\[
t^\alpha\ge s^\alpha+\alpha\,s^{\alpha-1}\,(t-s).
\]
We finally observe that this last inequality holds true for every $t,s\ge 0$, since this is nothing but the ``above tangent'' property of the convex function $\tau\mapsto \tau^\alpha$ (recall that $\alpha>1$, by definition). This concludes the proof for the case $1<p<N$.
\vskip.2cm\noindent
For the case $p=N$, one could simply observe that for every $0<r<R$, we have 
\[
\begin{split}
\lim_{p\nearrow N}\left(\frac{1}{\mathrm{cap}_p(\overline{B_{r}};B_R)}\right)^\frac{1}{p-1}&=\lim_{p\nearrow N}\frac{1}{(N\,\omega_N)^\frac{1}{p-1}}\, \left(\frac{p-1}{N-p}\right)\,\frac{1-\left(\dfrac{r}{R}\right)^\frac{N-p}{p-1}}{\left(\dfrac{r}{R}\right)^\frac{N-p}{p-1}}\,\frac{1}{R^\frac{N-p}{p-1}}\\
&=\frac{1}{(N\,\omega_N)^\frac{1}{N-1}}\,\lim_{p\nearrow N}\left(\frac{p-1}{N-p}\right)\,\left[\left(\dfrac{R}{r}\right)^\frac{N-p}{p-1}-1\right]\\
&=\frac{1}{(N\,\omega_N)^\frac{1}{N-1}}\,\left(\log \left(\frac{R}{r}\right)\right)=\left(\frac{1}{\mathrm{cap}_N(\overline{B_{r}};B_R)}\right)^\frac{1}{N-1}.
\end{split}
\]
Thus, it is sufficient to take the limit in the inequality for the case $1<p<N$, in order to conclude.
\end{proof}
\begin{remark}
\label{rem:cazzatella}
If $1<p<\infty$ and $\Sigma$ is a compact subset of the open set $E\subseteq \mathbb{R}^N$, the quantity
\[
\left(\frac{1}{\mathrm{cap}_p(\Sigma;E)}\right)^\frac{1}{p-1},
\]
is sometimes called the {\it $p-$modulus of $\Sigma$ relative to $E$}, see for example \cite[Chapter 2]{Flu}. In light of this, the estimate of Lemma \ref{lm:cazzatella} could also be seen as a consequence of the subadditivity of the $p-$modulus, which in the case of concentric balls reads as follows
\[
\left(\frac{1}{\mathrm{cap}_p(\overline{B_{r_1}};B_{r_2})}\right)^\frac{1}{p-1}
+\left(\frac{1}{\mathrm{cap}_p(\overline{B_{r_2}};B_R)}\right)^\frac{1}{p-1}\le \left(\frac{1}{\mathrm{cap}_p(\overline{B_{r_1}};B_R)}\right)^\frac{1}{p-1},
\] 
see \cite[Lemma 2.1]{Flu}. Indeed, by using the explicit expression of the quantities involved, it is not too difficult to see that 
\[
\left(\frac{1}{\mathrm{cap}_p(\overline{B_{r_1}};B_{r_2})}\right)^\frac{1}{p-1}\ge \frac{|B_{r_2}\setminus B_{r_1}|}{\big(\mathcal{H}^{N-1}(\partial B_{r_2})\big)^{\frac{p}{p-1}}}.
\]
We preferred to give here an elementary proof of the estimate which is needed for our purposes.
\par
For $p=2$, the previous subadditivity property of the $p-$modulus is also known as {\it Gr\"otzsch's lemma} (see for example \cite[Lemma 1.2]{Dub} and \cite[page 52, equation (8)]{PS}). 
\end{remark}

\section{Analysis of a Poincar\'e--type constant in a ball}
\label{sec:3}

The following result will be expedient in order to get the upper bound of the Main Theorem. The main point is the identity \eqref{secret} below.
\begin{lemma}
\label{lm:V}
Let $N\ge 2$ and $1<p\le N$. For $0<r_1<r_2< R$, we set
\[
S_{r_1,r_2}:=B_{r_2}\setminus \overline{B_{r_1}}=\Big\{x\in\mathbb{R}^N\, :\, r_1<|x|<r_2\Big\}.
\]
Let $V$ be the unique minimizer of the following problem
\[
\min_{\varphi\in W^{1,p}_0(B_R)} \left\{\frac{1}{p}\,\int_{B_R} |\nabla \varphi|^p\,dx-\int_{S_{r_1,r_2}} \varphi\,dx\right\}.
\]
Then $V$ is a $C^1(\overline{B_R})$ radially symmetric non-increasing function and it satisfies
\begin{equation}
\label{EL-V}
\int_{B_R} \langle |\nabla V|^{p-2}\,\nabla V,\nabla \varphi\rangle\,dx=\int_{S_{r_1,r_2}} \varphi\,dx,\qquad \text{ for every } \varphi\in W^{1,p}_0(B_R).
\end{equation}
Moreover, we have 
\begin{equation}
\label{secret}
\begin{split}
\int_{B_R} |\nabla V|^p\,dx&=\int_{S_{r_1,r_2}} \left(\frac{|x|}{N}\,\left(1-\left(\frac{r_1}{|x|}\right)^N\right)\right)^\frac{p}{p-1}dx+|S_{r_1,r_2}|^\frac{p}{p-1}\,\left(\frac{1}{\mathrm{cap}_p(\overline{B_{r_2}};B_R)}\right)^\frac{1}{p-1}.
\end{split}
\end{equation}
%Finally, for every $x$ such that $|x|=r_1$, we have the following estimate
%\begin{equation}
%\label{vladimiro}
%(V(x))^{p-1}\le \frac{|S_{r_1,r_2}|}{\mathrm{cap}_p\Big(\overline{B_{r_1}};B_{R}\Big)}.
%\end{equation}
\end{lemma}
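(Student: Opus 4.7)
The plan is to produce $V$ by the direct method of the calculus of variations, reduce to a one-dimensional ODE via symmetry, and then perform an explicit radial computation yielding \eqref{secret}. The functional $J(\varphi):=\frac{1}{p}\int_{B_R}|\nabla\varphi|^p\,dx-\int_{S_{r_1,r_2}}\varphi\,dx$ is strictly convex on $W^{1,p}_0(B_R)$ (since $p>1$), and it is coercive because H\"older combined with Poincar\'e gives
\[
\left|\int_{S_{r_1,r_2}}\varphi\,dx\right|\le |S_{r_1,r_2}|^{1-1/p}\,\|\varphi\|_{L^p(B_R)}\le C\,|S_{r_1,r_2}|^{1-1/p}\,\|\nabla\varphi\|_{L^p(B_R)},
\]
so $J(\varphi)\to+\infty$ as $\|\nabla\varphi\|_{L^p}\to+\infty$. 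The direct method then yields a unique minimizer $V$ whose first variation is exactly \eqref{EL-V}. Since both $B_R$ and $\mathbf{1}_{S_{r_1,r_2}}$ are $O(N)$-invariant, the composition $V\circ T$ is again a minimizer for every rotation $T$, and uniqueness forces $V(x)=v(|x|)$ with $v(R)=0$.

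Writing \eqref{EL-V} in radial form and integrating once between $0$ and $r$ (using boundedness of $v'$ near the origin) gives
\[
-r^{N-1}\,|v'(r)|^{p-2}\,v'(r)=\int_0^r s^{N-1}\,\mathbf{1}_{[r_1,r_2]}(s)\,ds,
\]
which produces the piecewise expressions $v'\equiv 0$ on $[0,r_1]$, $(-v'(r))^{p-1}=\frac{r}{N}(1-(r_1/r)^N)$ on $[r_1,r_2]$, and $(-v'(r))^{p-1}=(r_2^N-r_1^N)/(N\,r^{N-1})$ on $[r_2,R]$. These match continuously at $r=r_1$ and $r=r_2$, so $V\in C^1(\overline{B_R})$, and the sign $v'\le 0$ throughout gives the radial monotonicity of $V$.

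To establish \eqref{secret}, I split $B_R=B_{r_1}\cup S_{r_1,r_2}\cup(B_R\setminus\overline{B_{r_2}})$. On $B_{r_1}$ the gradient vanishes. On $S_{r_1,r_2}$ the explicit formula for $|v'|$ immediately gives $|\nabla V(x)|^p=\bigl(\frac{|x|}{N}(1-(r_1/|x|)^N)\bigr)^{p/(p-1)}$, which integrates to the first summand of \eqref{secret}. On the outer annulus $V$ is $p$-harmonic with $V=v(r_2)$ on $\partial B_{r_2}$ and $V=0$ on $\partial B_R$; by $p$-homogeneity of this Dirichlet problem, $V=v(r_2)\,\psi$, where $\psi$ is the capacitary potential of $\overline{B_{r_2}}$ relative to $B_R$, whence
\[
\int_{B_R\setminus\overline{B_{r_2}}}|\nabla V|^p\,dx=v(r_2)^p\,\mathrm{cap}_p(\overline{B_{r_2}};B_R).
\]
The final and main technical point is the identification of $v(r_2)=\int_{r_2}^R|v'(s)|\,ds$: using the explicit formula for $|v'|$ on $[r_2,R]$ this reduces to the one-dimensional quadrature $\int_{r_2}^R s^{-(N-1)/(p-1)}\,ds$, which is exactly the integral controlling $\mathrm{cap}_p(\overline{B_{r_2}};B_R)$ through \eqref{cappalla} (or \eqref{cappallaconf} for $p=N$); comparing the two expressions gives $v(r_2)=|S_{r_1,r_2}|^{1/(p-1)}\,(\mathrm{cap}_p(\overline{B_{r_2}};B_R))^{-1/(p-1)}$, and substitution produces the second summand of \eqref{secret}. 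The case $p=N$ can be handled either directly via \eqref{cappallaconf} or by a limiting argument $p\nearrow N$ as in the proof of Lemma \ref{lm:cazzatella}.
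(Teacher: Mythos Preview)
Your argument is correct and follows the same global architecture as the paper (direct method, uniqueness, rotational symmetry, reduction to a radial ODE, three-annuli decomposition), but two of the key steps are handled by a genuinely different and more direct route.

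First, for monotonicity the paper argues \emph{before} any ODE analysis, by building the competitor $\widetilde v(t)=\int_t^R|v'(\tau)|\,d\tau$ and using minimality to force $v=\widetilde v$; you instead read $v'\le 0$ directly off the explicit first integral of the ODE. Second, and more substantially, the paper never writes down $v'$ on $[r_2,R]$; it identifies $V/v(r_2)$ with the capacitary potential of $\overline{B_{r_2}}$ in $B_R$ (as you do), but then obtains $v(r_2)$ indirectly, by testing \eqref{EL-V} with a mollified $\mathbf 1_{S_{r_1,r_2}}$ to derive a second expression $\int_{B_R\setminus B_{r_2}}|\nabla V|^p=v(r_2)\,|S_{r_1,r_2}|$ and comparing it with $v(r_2)^p\,\mathrm{cap}_p(\overline{B_{r_2}};B_R)$. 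You bypass this test-function trick entirely by computing $v(r_2)=\int_{r_2}^R|v'|$ as an explicit quadrature and matching it to \eqref{cappalla}/\eqref{cappallaconf}. Your route is shorter and more elementary; the paper's is more structural and would survive in situations where the outer profile is not explicit.

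One small point to tighten: you invoke ``boundedness of $v'$ near the origin'' to fix the integration constant at $r=0$, but at that stage you have not yet established $C^1$ regularity. Either cite Lieberman's $C^{1,\alpha}$ result up front (as the paper does), or argue directly that on $(0,r_1)$ the first integral gives $r^{N-1}|v'|^{p-1}\equiv C$, and $C\neq 0$ would make $|v'|^p\,r^{N-1}\sim r^{-(N-1)/(p-1)}$ non-integrable near $0$ for $1<p\le N$, contradicting $V\in W^{1,p}_0(B_R)$.
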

\begin{proof}
Existence of a minimizer follows from the Direct Method in the Calculus of Variations. Uniqueness is a consequence of the strict convexity of the functional which is minimized.
Finally, we can observe that \eqref{EL-V} is the Euler-Lagrange equation of this minimization problem, thus $V$ satisfies it just by minimality. We can also infer that $V\in C^{1,\alpha}(\overline{B_R})$ for some $0<\alpha<1$, thanks to the classical regularity result \cite[Theorem 1]{Lie88}.
\par
The radial symmetry of $V$ follows from its uniqueness and the fact that the data of the problem are rotationally invariant. Thus, we must have 
\[
V(x)=v(|x|),\qquad \text{ for } x\in B_R,
\]
where $v$ is a function of one variable. We want to prove that $v$ is non-increasing: at this aim, we set
\[
\widetilde{v}(t)=\int_t^R |v'(\tau)|\,d\tau,\qquad \text{ for } t\in (0,R).
\]
Thus, by definition $\widetilde{v}$ is non-increasing. Moreover, we have 
\[
\widetilde{v}'(t)=-|v'(t)|,\qquad \text{for}\ t\in(0,R),
\]
and 
\[
\widetilde{v}(t)=\int_t^R |v'(\tau)|\,d\tau\ge \left|\int_t^R v'(\tau)\,d\tau\right|=|v(t)|\ge v(t).
\]
These facts show that if we set $\widetilde{V}(x)=\widetilde{v}(|x|)$, then 
\[
\frac{1}{p}\,\int_{B_R} |\nabla \widetilde{V}|^p\,dx-\int_{S_{r_1,r_2}} \widetilde{V}\,dx\le \frac{1}{p}\,\int_{B_R} |\nabla V|^p\,dx-\int_{S_{r_1,r_2}} V\,dx. 
\]
By minimality of $V$, we must have $V=\widetilde{V}$ and thus the claimed monotonicity follows.
\vskip.2cm\noindent
We now need to prove formula \eqref{secret}. We observe at first that by testing \eqref{EL-V} with $\varphi=V$, we obtain
\begin{equation}
\label{testate}
\int_{B_R} |\nabla V|^p\,dx=\int_{S_{r_1,r_2}} V\,dx.
\end{equation}
Still from \eqref{EL-V}, we get in particular
\[
\int_{B_R} \langle |\nabla V|^{p-2}\,\nabla V,\nabla \varphi\rangle\,dx=0,\qquad \text{ for every } \varphi\in W^{1,p}_0(B_{r_1}).
\]
Thus, the function $V$ is weakly $p-$harmonic in the ball $B_{r_1}$. Moreover, thanks to its radial symmetry, it is constant on $\partial B_{r_1}$. By uniqueness of the Dirichlet problem for the $p-$Laplacian, we obtain that $V$ must be constant on the whole $B_{r_1}$. Thus, we obtain
\begin{equation}
\label{presplit}
\int_{B_R} |\nabla V|^p\,dx=\int_{B_R\setminus B_{r_1}} |\nabla V|^p\,dx.
\end{equation}
In turn, we split the last integral in two parts
\begin{equation}
\label{split}
\int_{B_R\setminus B_{r_1}} |\nabla V|^p\,dx=\int_{S_{r_1,r_2}} |\nabla V|^p\,dx+\int_{B_R\setminus B_{r_2}} |\nabla V|^p\,dx.
\end{equation}
In order to determine the first integral on the right-hand side, we take $h\in C^\infty_0((r_1,r_2))$ and use \eqref{EL-V} with test function $\varphi(x)=h(|x|)$. By using spherical coordinates and recalling the notation $V(x)=v(|x|)$, we get
\[
\int_{r_1}^{r_2} |v'|^{p-2}\,v'\,h'\,\varrho^{N-1}\,d\varrho=\int_{r_1}^{r_2} h\,\varrho^{N-1}\,d\varrho.
\]
We integrate by parts the last term, so to obtain
\[
\int_{r_1}^{r_2} \left[|v'|^{p-2}\,v'\,\varrho^{N-1}+\frac{\varrho^N}{N}\right]\,h'\,d\varrho=0,\qquad \text{ for every }h\in C^\infty_0((r_1,r_2)).
\]
This implies that there exists a constant $C$ such that
\[
|v'|^{p-2}\,v'\,\varrho^{N-1}+\frac{\varrho^N}{N}=C,\qquad \text{ on } (r_1,r_2).
\]
By recalling that $v'\le 0$, from this identity we get
\[
(-v'(\varrho))^{p-1}=\frac{\varrho}{N}-\frac{C}{\varrho^{N-1}},\qquad \text{ for } \varrho\in(r_1,r_2).
\]
The constant $C$ can be determined, by recalling that $v$ is $C^1$ and that $v$ is constant on $[0,r_1]$, from the above discussion. Thus, it must result
\[
0=(-v'(r_1))^{p-1}=\frac{r_1}{N}-\frac{C}{r_1^{N-1}}\qquad \text{ that is }\qquad C=\frac{r_1^N}{N}.
\]
In conclusion, we get that 
\[
|\nabla V(x)|^{p}=(-v'(|x|))^p=\left(\frac{|x|}{N}-\frac{r_1}{N}\,\left(\frac{r_1}{|x|}\right)^{N-1}\right)^\frac{p}{p-1}=\left(\frac{|x|}{N}\,\left(1-\left(\frac{r_1}{|x|}\right)^N\right)\right)^\frac{p}{p-1}.
\]
By integrating it over $S_{r_1,r_2}$, we get
\begin{equation}
\label{I1}
\int_{S_{r_1,r_2}} |\nabla V|^p\,dx=\int_{S_{r_1,r_2}}\left(\frac{|x|}{N}\,\left(1-\left(\frac{r_1}{|x|}\right)^N\right)\right)^\frac{p}{p-1}\,dx.
\end{equation}
We still need to determine the second integral in \eqref{split}.
To this aim, we take for every $n\in\mathbb{N}$ sufficiently large, the following function
\[
\varphi_n=1_{S_{r_1,r_2}}\ast \rho_n,
\]
where $\{\rho_n\}_{n\in\mathbb{N}\setminus\{0\}}$ is the usual family of radial smoothing kernels. By using \eqref{EL-V} with $\varphi=V\,\varphi_n$, we get
\[
\int_{B_R} |\nabla V|^{p}\,\varphi_n\,dx+\int_{B_R} \langle |\nabla V|^{p-2}\,\nabla V,\nabla \varphi_n\rangle\,V\,dx=\int_{S_{r_1,r_2}} V\,\varphi_n\,dx.
\]
By using the properties of convolutions, the regularity of $V$ and the radial symmetry of both $V$ and $\varphi_n$, the limit as $n$ goes to $\infty$ yields
\[
\int_{S_{r_1,r_2}} |\nabla V|^{p}\,dx+v(r_2)\,(-v'(r_2))^{p-1}\,\mathcal{H}^{N-1}(\partial B_{r_2})=\int_{S_{r_1,r_2}} V\,dx.
\]
Observe that we also used that $v'(r_1)=0$, as explained above. By using \eqref{testate}, \eqref{presplit} and \eqref{split}, this in turn implies
\[
\cancel{\int_{S_{r_1,r_2}} |\nabla V|^{p}\,dx}+v(r_2)\,(-v'(r_2))^{p-1}\,\mathcal{H}^{N-1}(\partial B_{r_2})=\cancel{\int_{S_{r_1,r_2}} |\nabla V|^p\,dx}+\int_{B_R\setminus B_{r_2}} |\nabla V|^p\,dx.
\]
That is
\[
\int_{B_R\setminus B_{r_2}} |\nabla V|^p\,dx=v(r_2)\,(-v'(r_2))^{p-1}\,\mathcal{H}^{N-1}(\partial B_{r_2}).
\]
The term $v'(r_2)$ can be computed, thanks to the fact that $v$ is $C^1$ and to the exact determination of $v'$ on the interval $(r_1,r_2)$. We must have
\[
(-v'(r_2))^{p-1}=\frac{r_2}{N}-\frac{r_1}{N}\,\left(\frac{r_1}{r_2}\right)^{N-1}=\frac{r_2}{N}\,\left(1-\left(\frac{r_1}{r_2}\right)^N\right).
\]
Thus, up to now we have obtained
\begin{equation}
\label{quasirotto}
\int_{B_R\setminus B_{r_2}} |\nabla V|^p\,dx=v(r_2)\,\frac{r_2}{N}\,\left(1-\left(\frac{r_1}{r_2}\right)^N\right)\,\mathcal{H}^{N-1}(\partial B_{r_2})=v(r_2)\,|S_{r_1,r_2}|.
\end{equation}
We still need to determine $v(r_2)$. To this aim, it is sufficient to observe that, thanks to both the monotonicity and the $p-$harmonicity of $V$, the function 
\[
W=\min\left\{\frac{V}{v(r_2)},\,1\right\},
\] 
is a weakly $p-$harmonic function in $B_R\setminus \overline{B_{r_2}}$, vanishing on $\partial B_R$ and is equal to $1$ on $\overline{B_{r_2}}$. Thus, it must be the $p-$capacitary potential of $\overline{B_{r_2}}$, relative to $B_R$, i.e. we have\footnote{It is not difficult to see that there exists a sequence $\{W_n\}_{n\in\mathbb{N}}\subseteq \mathrm{Lip}_0(B_R)$ such that $0\le W_n\le 1$, $W_n\equiv 1$ on $\overline{B_{r_2}}$ and
\[
\lim_{n\to\infty} \|\nabla W_n\|_{L^p(B_R)}=\|\nabla W\|_{L^p(B_R)}.
\]
Thus, in light of \eqref{capalt}, we have 
\[
\mathrm{cap}_p(\overline{B_{r_2}};B_R)\le \lim_{n\to\infty} \int_{B_R} |\nabla W_n|^p\,dx=\int_{B_R} |\nabla W|^p\,dx=\int_{B_R\setminus B_{r_2}} |\nabla W|^p\,dx.
\]
On the other hand, for every $\varphi\in\mathrm{Lip}_0(B_R)$ such that $0\le \varphi\le 1$ and $\varphi\equiv 1$ on $\overline{B_{r_2}}$, we have 
\[
\int_{B_R} |\nabla \varphi|^p\,dx=\int_{B_R\setminus B_{r_2}} |\nabla \varphi|^p\,dx\ge \int_{B_R\setminus B_{r_2}} |\nabla W|^p\,dx+p\,\int_{B_R\setminus B_{r_2}} \langle |\nabla W|^{p-2}\,\nabla W,\nabla \varphi-\nabla W\rangle\,dx.
\]
By using \eqref{EL-V}, the fact that $\varphi-W\in W^{1,p}_0(B_R)$ and $\varphi-W\equiv0$ on $\overline{B_{r_2}}$, we get that the rightmost integral vanishes. By arbitrariness of $\varphi$ and using again \eqref{capalt}, we obtain the claimed identity.}
\[
\int_{B_R\setminus B_{r_2}} |\nabla W|^p=\mathrm{cap}_p(\overline{B_{r_2}};B_R).
\]
This is the same as 
\[
\int_{B_R\setminus B_{r_2}} |\nabla V|^p\,dx=v(r_2)^p\,\mathrm{cap}_p(\overline{B_{r_2}};B_R).
\]
By comparing the previous  two expressions for $\int_{B_R\setminus B_{r_2}} |\nabla V|^p\,dx$, we get
\[
v(r_2)\,|S_{r_1,r_2}|=v(r_2)^p\,\mathrm{cap}_p(\overline{B_{r_2}};B_R).
\]
This finally gives
\[
v(r_2)=\left(\frac{|S_{r_1,r_2}|}{\mathrm{cap}_p(\overline{B_{r_2}};B_R)}\right)^\frac{1}{p-1}.
\]
By using this expression in \eqref{quasirotto}, we end up with 
\begin{equation}
\label{I2}
\int_{B_R\setminus B_{r_2}} |\nabla V|^p\,dx=|S_{r_1,r_2}|^\frac{p}{p-1}\,\left(\frac{1}{\mathrm{cap}_p(\overline{B_{r_2}};B_R)}\right)^\frac{1}{p-1}.
\end{equation}
By using \eqref{I1} and \eqref{I2} in \eqref{split} and recalling \eqref{presplit}, we finally obtain the desired formula.
\end{proof}
As a consequence of the properties of the function $V$, we can estimate a suitable Poincar\'e--type constant. This is the main result of this section, contained in the following
\begin{proposition}
\label{coro:sharp1p}
Let $N\ge 2$ and $1 < p\le N$. With the same notation of Proposition \ref{lm:V}, we have
\begin{equation}
\label{torsiontype1}
\sup_{\varphi\in W^{1,p}_0(B_R)\setminus\{0\}} \frac{\displaystyle\left(\int_{S_{r_1,r_2}} |\varphi|\,dx\right)^p}{\displaystyle \int_{B_R}|\nabla \varphi|^p\,dx}=\left(\int_{B_R} |\nabla V|^p\,dx\right)^{p-1}.
\end{equation}
In particular, for every $\varphi\in W^{1,p}_0(B_R)$ we get
\begin{equation}
\label{torsiontype2}
\begin{split}
\left(\fint_{S_{r_1,r_2}} |\varphi|\,dx\right)^p&\le \left[\frac{|S_{r_1,r_2}|}{(\mathcal{H}^{N-1}(\partial B_{r_2}))^\frac{p}{p-1}} 
+\left(\frac{1}{\mathrm{cap}_p(\overline{B_{r_2}};B_R)}\right)^\frac{1}{p-1}\right]^{p-1}\,\int_{B_R}|\nabla \varphi|^p\,dx.
\end{split}
\end{equation}
\end{proposition}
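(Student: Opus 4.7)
The plan is to deduce identity \eqref{torsiontype1} by a standard convex-duality/rescaling argument built on Lemma \ref{lm:V}, and then to obtain \eqref{torsiontype2} by combining \eqref{torsiontype1} with the explicit formula \eqref{secret} and a one-line monotonicity observation.

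For \eqref{torsiontype1}, denote by $\Phi$ the functional minimized in Lemma \ref{lm:V}. For any $\varphi\in W^{1,p}_0(B_R)\setminus\{0\}$ with $\int_{S_{r_1,r_2}}|\varphi|\,dx>0$, I consider the scalar map $t\mapsto \Phi(t|\varphi|)$. Since $|\nabla|\varphi||=|\nabla\varphi|$ almost everywhere, this is a simple polynomial in $t$, and a direct minimization in $t>0$ yields
\[
\min_{t>0}\Phi(t|\varphi|)=-\frac{p-1}{p}\,\frac{\left(\int_{S_{r_1,r_2}}|\varphi|\,dx\right)^{p/(p-1)}}{\left(\int_{B_R}|\nabla \varphi|^p\,dx\right)^{1/(p-1)}}.
\]
Since $\Phi(|\psi|)\le \Phi(\psi)$ for every $\psi\in W^{1,p}_0(B_R)$, passing to the infimum over $\varphi$ on both sides recovers exactly $\inf_{W^{1,p}_0(B_R)}\Phi$ on the left. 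On the other hand, using $V$ itself as a test function in \eqref{EL-V} yields \eqref{testate}, from which $\Phi(V)=-\frac{p-1}{p}\int_{B_R}|\nabla V|^p\,dx$. Matching the two expressions for the infimum and taking $(p-1)$-th powers produces \eqref{torsiontype1}.

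For \eqref{torsiontype2}, I divide \eqref{torsiontype1} by $|S_{r_1,r_2}|^p$ and plug in \eqref{secret}: the desired inequality then reduces to
\[
\int_{S_{r_1,r_2}}\left(\frac{|x|}{N}\,\left(1-\left(\frac{r_1}{|x|}\right)^N\right)\right)^{p/(p-1)}dx\le |S_{r_1,r_2}|\,\left(\frac{|S_{r_1,r_2}|}{\mathcal{H}^{N-1}(\partial B_{r_2})}\right)^{p/(p-1)}.
\]
The key algebraic observation is that evaluating the integrand at $|x|=r_2$ gives exactly $|S_{r_1,r_2}|/\mathcal{H}^{N-1}(\partial B_{r_2})$, since
\[
\frac{r_2}{N}\,\left(1-\left(\frac{r_1}{r_2}\right)^N\right)=\frac{\omega_N(r_2^N-r_1^N)}{N\omega_N\,r_2^{N-1}}=\frac{|S_{r_1,r_2}|}{\mathcal{H}^{N-1}(\partial B_{r_2})}.
\]
A one-line derivative computation shows that the integrand is an increasing radial function on $(r_1,r_2)$, so bounding it by its value at $r_2$ and integrating over $S_{r_1,r_2}$ closes the proof.

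The argument is essentially mechanical once Lemma \ref{lm:V} is in hand; the only step that requires a bit of foresight is the algebraic identity just displayed, which explains why the $L^\infty$-bound on the gradient profile of $V$ on the ring $S_{r_1,r_2}$ matches precisely the geometric quantity $|S_{r_1,r_2}|/\mathcal{H}^{N-1}(\partial B_{r_2})$ appearing on the right-hand side of \eqref{torsiontype2}.
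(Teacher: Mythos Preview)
Your proof is correct and, for \eqref{torsiontype2}, identical to the paper's: both bound the radial integrand in \eqref{secret} by its value at $|x|=r_2$ via monotonicity, and then recognize that this value equals $|S_{r_1,r_2}|/\mathcal{H}^{N-1}(\partial B_{r_2})$.

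For \eqref{torsiontype1} there is a minor methodological difference worth noting. The paper obtains the upper bound directly: it tests the Euler--Lagrange equation \eqref{EL-V} with $|\varphi|$ and applies H\"older's inequality, which gives $\int_{S_{r_1,r_2}}|\varphi|\,dx\le \|\nabla V\|_{L^p}^{p-1}\|\nabla\varphi\|_{L^p}$ in one line. You instead use a scaling argument on $t\mapsto\Phi(t|\varphi|)$ to rewrite the infimum of $\Phi$ as $-\tfrac{p-1}{p}$ times the $(p/(p-1))$-th power of the supremum in question, and then match this with $\Phi(V)$. Both routes are standard convex-duality manifestations of the same fact and are equally short; the paper's is perhaps more direct, while yours has the conceptual advantage of making explicit that the Poincar\'e-type constant is exactly the (negative) rescaled minimum of the torsion-type functional.
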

\begin{proof}
By using $V$ as a test function, we have 
\[
\sup_{\varphi\in W^{1,p}_0(B_R)\setminus\{0\}} \frac{\displaystyle\left(\int_{S_{r_1,r_2}} |\varphi|\,dx\right)^p}{\displaystyle \int_{B_R}|\nabla \varphi|^p\,dx}\ge \frac{\displaystyle\left(\int_{S_{r_1,r_2}} V\,dx\right)^p}{\displaystyle \int_{B_R}|\nabla V|^p\,dx}=\left(\int_{B_R} |\nabla V|^p\,dx\right)^{p-1}.
\]
We also used the identity \eqref{testate}. On the other hand, by taking $\varphi\in W^{1,p}_0(B_R)$ and testing the equation \eqref{EL-V} with $|\varphi|\in W^{1,p}_0(B_R)$, we get
\[
\int_{S_{r_1,r_2}} |\varphi|\,dx=\int_{B_R} \langle |\nabla V|^{p-2}\,\nabla V,\nabla |\varphi|\rangle\,dx\le \left(\int_{B_R} |\nabla V|^p\,dx\right)^\frac{p-1}{p}\,\left(\int_{B_R} |\nabla \varphi|^p\,dx\right)^\frac{1}{p}.
\]
The desired conclusion \eqref{torsiontype1} now follows, thanks to the arbitrariness of $\varphi\in W^{1,p}_0(B_R)$. 
\par
The estimate \eqref{torsiontype2} will simply follow from \eqref{torsiontype1}, once we recall the expression \eqref{secret} for the $L^p$ norm of $\nabla V$. We estimate from above the latter:
observe that the function
\[
t\mapsto \left(\frac{t}{N}\right)^\frac{p}{p-1}\,\left(1-\left(\frac{r_1}{t}\right)^N\right)^\frac{p}{p-1},
\]
is monotone increasing. Thus, the estimate \eqref{secret} implies
\[
\begin{split}
\int_{B_R} |\nabla V|^p\,dx&\le \left(\frac{r_2}{N}\,\left(1-\left(\frac{r_1}{r_2}\right)^N\right)\right)^\frac{p}{p-1}\, |S_{r_1,r_2}| +\left(\frac{|S_{r_1,r_2}|^{p}}{\mathrm{cap}_p(\overline{B_{r_2}};B_R)}\right)^\frac{1}{p-1}.\\
\end{split}
\]
We then observe that 
\[
\left(\frac{r_2}{N}\,\left(1-\left(\frac{r_1}{r_2}\right)^N\right)\right)^\frac{p}{p-1}=\left(\frac{|B_{r_2}|}{\mathcal{H}^{N-1}(\partial B_{r_2})}\,\frac{|S_{r_1,r_2}|}{|B_{r_2}|}\right)^\frac{p}{p-1}.
\]
Thus, we get
\[
\begin{split}
\frac{\left(\displaystyle\int_{B_R} |\nabla V|^p\,dx\right)^{p-1}}{|S_{r_1,r_2}|^p}%&\le \left(\frac{1}{\mathcal{H}^{N-1}(\partial B_{r_2})}\right)^p\,\left[|S_{r_1,r_2}| +\left(\frac{\Big(\mathcal{H}^{N-1}(\partial B_{r_2})\Big)^{p}}{\mathrm{cap}_p(\overline{B_{r_2}};B_R)}\right)^\frac{1}{p-1}\right]^{p-1}\\
&\le \left[\frac{|S_{r_1,r_2}|}{(\mathcal{H}^{N-1}(\partial B_{r_2}))^\frac{p}{p-1}} 
+\left(\frac{1}{\mathrm{cap}_p(\overline{B_{r_2}};B_R)}\right)^\frac{1}{p-1}\right]^{p-1}.
\end{split}
\]
This concludes the proof.
\end{proof}	
\begin{remark}
We observe that, by using the geometric estimate of Lemma \ref{lm:cazzatella} in \eqref{torsiontype2}, one can also get the slightly rougher (but definitely handier) estimate 
\begin{equation}
\label{torsiontype3}
\left(\fint_{S_{r_1,r_2}} |\varphi|\,dx\right)^p\le \frac{1}{\mathrm{cap}_p(\overline{B_{r_1}};B_R)}\,\int_{B_R}|\nabla \varphi|^p\,dx,\qquad \text{for every}\ \varphi\in W^{1,p}_0(B_R).
\end{equation}
%As explained in the Introduction, this will be useful in the proof of our main result.
\end{remark}
By a limiting argument, we can cover the case $p=1$, as well. In this case, the sharp constant has a simpler and nicer expression.
\begin{corollary}
\label{coro:sharp11}
Let $N\ge 2$. With the same notation of Proposition \ref{lm:V}, we have
\begin{equation}
\label{torsiontype21}
\begin{split}
\sup_{\varphi\in W^{1,1}_0(B_R)\setminus\{0\}} \frac{\displaystyle\int_{S_{r_1,r_2}} |\varphi|\,dx}{\displaystyle \int_{B_R}|\nabla \varphi|\,dx}=\frac{|S_{r_1,r_2}|}{\mathrm{cap}_1(\overline{B_{r_2}};B_R)}.
\end{split}
\end{equation}
\end{corollary}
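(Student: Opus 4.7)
The plan is to follow the limiting argument alluded to in the paper, extracting the $p=1$ identity from the $p>1$ estimate \eqref{torsiontype2} and complementing it with an explicit near-extremal sequence. I would establish the two matching inequalities separately.

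For the upper bound, I would fix $\varphi \in C^\infty_0(B_R)$ and apply \eqref{torsiontype2} for $1 < p \le N$, then take a $p$-th root to obtain
\[
\fint_{S_{r_1,r_2}} |\varphi|\,dx \le K_p^{(p-1)/p}\,\|\nabla\varphi\|_{L^p(B_R)}, \qquad K_p := \frac{|S_{r_1,r_2}|}{A^{p/(p-1)}} + \left(\frac{1}{c_p}\right)^{\frac{1}{p-1}},
\]
with $A := \mathcal{H}^{N-1}(\partial B_{r_2})$ and $c_p := \mathrm{cap}_p(\overline{B_{r_2}};B_R)$. The delicate step is to show $K_p^{(p-1)/p} \to 1/A$ as $p \to 1^+$. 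Factoring out $A^{-1/(p-1)}$ yields
\[
K_p^{p-1} = \frac{1}{A}\,\left[\frac{|S_{r_1,r_2}|}{A} + \left(\frac{c_p}{A}\right)^{-\frac{1}{p-1}}\right]^{p-1},
\]
and plugging in the explicit formula \eqref{cappalla} with $q = p-1$ I would verify that the scalar factor $q^{-q} \to 1$ controls the rate, producing $(c_p/A)^{-1/(p-1)} \to 0$ as $p \to 1^+$. The bracket then tends to the positive constant $|S_{r_1,r_2}|/A$, and raising to the vanishing power $p-1$ yields $1$; hence $K_p^{p-1} \to 1/A$ and therefore $K_p^{(p-1)/p} \to 1/A$. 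Combined with $\|\nabla\varphi\|_{L^p(B_R)} \to \|\nabla\varphi\|_{L^1(B_R)}$ (dominated convergence, since $|\nabla\varphi|$ is bounded on the bounded set $B_R$), this delivers $\int_{S_{r_1,r_2}} |\varphi|\,dx \le (|S_{r_1,r_2}|/A)\int_{B_R}|\nabla\varphi|\,dx$ for smooth $\varphi$; the density of $C^\infty_0(B_R)$ in $W^{1,1}_0(B_R)$ then extends the inequality to every admissible function.

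For the matching lower bound, I would construct an explicit near-extremal sequence: for $n$ large enough that $r_2 + 1/n < R$, set
\[
\varphi_n(x) := \begin{cases} 1, & |x| \le r_2,\\ n\,(r_2 + 1/n - |x|), & r_2 < |x| < r_2 + 1/n,\\ 0, & |x| \ge r_2 + 1/n,\end{cases}
\]
so that $\varphi_n \in W^{1,1}_0(B_R)$ with $\int_{S_{r_1,r_2}}\varphi_n\,dx = |S_{r_1,r_2}|$ and $\int_{B_R}|\nabla\varphi_n|\,dx = n\,|B_{r_2+1/n}\setminus B_{r_2}| \to \mathcal{H}^{N-1}(\partial B_{r_2}) = A$; the corresponding ratio thus tends to $|S_{r_1,r_2}|/A$, matching the upper bound and giving the claimed equality. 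The main obstacle is the asymptotic analysis of $K_p^{p-1}$: it relies on the precise rate $(c_p/A)^{-1/(p-1)} \to 0$ coming from the explicit capacity formula, and it is exactly this cancellation that makes the sharper \eqref{torsiontype2} yield the correct constant $|S_{r_1,r_2}|/A$, rather than the strictly larger $|S_{r_1,r_2}|/\mathcal{H}^{N-1}(\partial B_{r_1})$ that the handier bound \eqref{torsiontype3} would produce in the limit.
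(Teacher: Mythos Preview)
Your proposal is correct and follows essentially the same approach as the paper: the upper bound is obtained by passing to the limit $p\searrow 1$ in \eqref{torsiontype2}, and the lower bound comes from testing with an approximation of $1_{B_{r_2}}$. The only cosmetic differences are that you factor out $A^{-1/(p-1)}$ while the paper factors out $c_p^{-1/(p-1)}$ (equivalent, since $c_1=A$), and you use an explicit piecewise-linear radial cut-off in place of the paper's mollified indicator $1_{B_{r_2}}\ast\rho_n$.
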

\begin{proof}
As above, we take for every $n\in\mathbb{N}$ the following function
\[
\varphi_n=1_{B_{r_2}}\ast \rho_n,
\]
where $\{\rho_n\}_{n\in\mathbb{N}\setminus\{0\}}$ is the usual family of radial smoothing kernels.
Since $B_{r_2} \Subset B_R$, for $n$ sufficiently large we have that $\varphi_n\in C^\infty_0(B_R)$. By the properties of convolutions and by \cite[page 121]{AFP}, we have
\[
\sup_{\varphi\in W^{1,1}_0(B_R)\setminus\{0\}} \frac{\displaystyle\int_{S_{r_1,r_2}} |\varphi|\,dx}{\displaystyle \int_{B_R}|\nabla \varphi|\,dx}\ge\lim_{n \to\infty} \frac{\displaystyle \int_{S_{r_1,r_2}} |\varphi_n| \,dx} {\displaystyle \int_{B_R} |\nabla \varphi_n|\,dx}=\dfrac{|S_{r_1,r_2}|}{\mathcal{H}^{N-1}(\partial B_{r_2})}=\dfrac{|S_{r_1,r_2}|}{\mathrm{cap}_1(\overline{B_{r_2}};B_R)}.
\]
In order to prove the reverse inequality, we first observe that 
\[
\lim_{p\searrow 1} \frac{1}{\mathrm{cap}_p(\overline{B_{r_2}};B_R)}=\frac{1}{\mathrm{cap}_1(\overline{B_{r_2}};B_R)}.
\]
This simply follows by recalling the expressions \eqref{cappalla} and \eqref{cappalla0}. We also claim that 
\[
\lim_{p\searrow 1}\left[\frac{|S_{r_1,r_2}|}{\mathcal{H}^{N-1}(\partial B_{r_2})}\,\left(\frac{\mathrm{cap}_p(\overline{B_{r_2}};B_R)}{\mathcal{H}^{N-1}(\partial B_{r_2})}\right)^\frac{1}{p-1}+1\right]^{p-1}=1.
\]
Indeed, we have
\[
\frac{|S_{r_1,r_2}|}{\mathcal{H}^{N-1}(\partial B_{r_2})}\,\left(\frac{\mathrm{cap}_p(\overline{B_{r_2}};B_R)}{\mathcal{H}^{N-1}(\partial B_{r_2})}\right)^\frac{1}{p-1}=\frac{r_2^N-r_1^N}{N\,r_2^N}\,\left(\frac{N-p}{p-1}\right)\,\frac{1}{\left(1-\left(\dfrac{r_2}{R}\right)^\frac{N-p}{p-1}\right)}.
\]
Thus, 
for $p$ converging to $1$
\[
\begin{split}
(p-1)\,&\log \left(\frac{|S_{r_1,r_2}|}{\mathcal{H}^{N-1}(\partial B_{r_2})}\,\left(\frac{\mathrm{cap}_p(\overline{B_{r_2}};B_R)}{\mathcal{H}^{N-1}(\partial B_{r_2})}\right)^\frac{1}{p-1}+1\right)\\
&=(p-1)\,\log\left(\frac{r_2^N-r_1^N}{N\,r_2^N}\,\left(\frac{N-p}{p-1}\right)\,\frac{1}{\left(1-\left(\dfrac{r_2}{R}\right)^\frac{N-p}{p-1}\right)}+1\right)\sim (p-1)\,\log\left(\frac{N-p}{p-1}\,\right).
\end{split}
\]
Since the last quantity is infinitesimal, we get the claimed limit.
\par
We now take $\varphi\in C^\infty_0(B_R)\setminus\{0\}$. By taking the limit as $p$ goes to $1$ in \eqref{torsiontype2} and using the previous results, we get
\[
\frac{\displaystyle\int_{S_{r_1,r_2}} |\varphi|\,dx}{\displaystyle \int_{B_R}|\nabla \varphi|\,dx}\le \frac{|S_{r_1,r_2}|}{\mathrm{cap}_1(\overline{B_{r_2}};B_R)}.
\]
By arbitrariness of $\varphi$ and by density of $C^\infty_0(B_R)$ in $W^{1,1}_0(B_R)$, we get the conclusion.
\end{proof}
\begin{remark}[A Cheeger--type constant]
\label{oss:cheegertype}
It is not difficult to see that 
\[
\inf_{\varphi\in C^\infty_0(B_R)\setminus\{0\}} \frac{\displaystyle \int_{B_R}|\nabla \varphi|\,dx}{\displaystyle\int_{S_{r_1,r_2}} |\varphi|\,dx}=\inf\left\{\dfrac{\mathcal{H}^{N-1}( \partial E)}{|E \cap S_{r_1,r_2}|}\, : \, E \Subset B_R \text{ has smooth boundary}\right\},
\]
see for example \cite[Theorem 2.1.3]{Maz}. We tacitly assume that the last ratio is $+\infty$, for those sets $E$ such that $|E\cap S_{r_1,r_2}|=0$.
In light of Corollary \ref{coro:sharp11}, we thus have 
\[
\inf\left\{\dfrac{\mathcal{H}^{N-1}( \partial E)}{|E \cap S_{r_1,r_2}|}\, : \, E \Subset B_R \text{ has smooth boundary}\right\}=\frac{\mathrm{cap}_1(\overline{B_{r_2}};B_R)}{|S_{r_1,r_2}|}=\frac{\mathcal{H}^{N-1}(\partial B_{r_2})}{|S_{r_1,r_2}|}.
\]
In particular, we have that $E=B_{r_2}$ is an optimal shape, for this Cheeger--type constant. 
\end{remark}

\section{Proof of the Main Theorem: lower bound}
\label{sec:4}

We start with the following easy fact: this is a particular case of \cite[Chapter 13, Proposition 1, page 658]{Maz}. We report the proof for the reader's convenience. 
\begin{proposition} 
	Let $1 \leq p < \infty$ and let $\Sigma \subseteq B_r(x_0)$ be a compact set. Then, for every $R>r$ we have 
	\begin{equation} 
	\label{comparison_cap}
		\mathrm{cap}_p(\Sigma; B_R(x_0)) \leq \mathrm{cap}_p(\Sigma; B_r(x_0)) \leq \left(\frac{1}{\lambda_p(B_1)^{\frac{1}{p}}}\, \frac{R}{d}+1 \right)^{p}\, \mathrm{cap}_p(\Sigma; B_R(x_0)), 
	\end{equation}
	where $d:=\mathrm{dist}(\Sigma,\partial B_r(x_0))>0$.
	\end{proposition}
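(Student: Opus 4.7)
The left inequality is immediate: every admissible test function $\varphi\in C^\infty_0(B_r(x_0))$ with $\varphi\ge 1$ on $\Sigma$ can be extended by zero to a function in $C^\infty_0(B_R(x_0))$ with the same gradient $L^p$--norm, so $\mathrm{cap}_p(\Sigma;B_R(x_0))\le \mathrm{cap}_p(\Sigma;B_r(x_0))$.

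For the right inequality, the plan is a cut-off argument followed by Poincar\'e on the larger ball. Given $\varphi\in C^\infty_0(B_R(x_0))$ with $\varphi\ge 1$ on $\Sigma$, I would introduce the Lipschitz cut-off
\[
\eta(x):=\min\left\{1,\ \frac{\mathrm{dist}(x,\mathbb{R}^N\setminus B_r(x_0))}{d}\right\},
\]
which satisfies $0\le \eta\le 1$, $\eta\equiv 1$ on $\Sigma$ (since $\mathrm{dist}(\Sigma,\partial B_r(x_0))=d$), $\eta\equiv 0$ outside $B_r(x_0)$, and $\|\nabla \eta\|_{L^\infty}\le 1/d$. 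Then $\psi:=\eta\,\varphi\in \mathrm{Lip}_0(B_r(x_0))$ and $\psi\ge 1$ on $\Sigma$, so by Remark \ref{oss:altrefunzioni} it is an admissible competitor for $\mathrm{cap}_p(\Sigma;B_r(x_0))$.

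Applying the Leibniz rule and the triangle inequality in $L^p$, together with $0\le\eta\le 1$ and $\|\nabla\eta\|_\infty\le 1/d$, gives
\[
\left(\int_{B_r(x_0)}|\nabla \psi|^p\,dx\right)^{\!\frac{1}{p}}\le \left(\int_{B_R(x_0)}|\nabla \varphi|^p\,dx\right)^{\!\frac{1}{p}}+\frac{1}{d}\left(\int_{B_R(x_0)}|\varphi|^p\,dx\right)^{\!\frac{1}{p}}.
\]
Since $\varphi\in C^\infty_0(B_R(x_0))$, the Poincar\'e inequality on $B_R(x_0)$ combined with the scaling relation $\lambda_p(B_R(x_0))=R^{-p}\,\lambda_p(B_1)$ (which follows by a direct change of variable in the definition of $\lambda_p$) yields
\[
\left(\int_{B_R(x_0)}|\varphi|^p\,dx\right)^{\!\frac{1}{p}}\le \frac{R}{\lambda_p(B_1)^{\frac{1}{p}}}\,\left(\int_{B_R(x_0)}|\nabla \varphi|^p\,dx\right)^{\!\frac{1}{p}}.
\]
Combining the last two displays,
\[
\left(\int_{B_r(x_0)}|\nabla \psi|^p\,dx\right)^{\!\frac{1}{p}}\le \left(1+\frac{1}{\lambda_p(B_1)^{\frac{1}{p}}}\,\frac{R}{d}\right)\left(\int_{B_R(x_0)}|\nabla \varphi|^p\,dx\right)^{\!\frac{1}{p}}.
\]
Raising to the $p$-th power and then taking the infimum over admissible $\varphi$ on the right (using Remark \ref{oss:altrefunzioni} to justify the use of Lipschitz competitors on the left) produces the announced upper bound.

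I do not expect any real obstacle here; the only delicate point is to make sure the constructed $\psi$ is genuinely admissible in the definition of $\mathrm{cap}_p(\Sigma;B_r(x_0))$. This is where Remark \ref{oss:altrefunzioni} enters: even though $\eta$ (and hence $\psi$) is merely Lipschitz and not smooth, the characterization of the capacity via $\mathrm{Lip}_0$ functions makes $\psi$ a legitimate competitor, and one avoids a mollification argument that would otherwise only produce the same bound up to an arbitrarily small error.
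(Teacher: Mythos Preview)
Your argument is essentially the paper's own proof: a Lipschitz cut-off supported in $B_r(x_0)$ and equal to $1$ on $\Sigma$, followed by Minkowski's inequality and Poincar\'e's inequality on $B_R(x_0)$. There is one small technical slip: your $\eta$ is supported in $\overline{B_r(x_0)}$, not \emph{compactly} in $B_r(x_0)$, so $\psi=\eta\,\varphi$ is not literally in $\mathrm{Lip}_0(B_r(x_0))$ as defined in Remark~\ref{oss:altrefunzioni}. The paper circumvents this by taking the cut-off to vanish outside $B_{r-\varepsilon}(x_0)$, yielding $\|\nabla\eta\|_{L^\infty}\le 1/(d-\varepsilon)$, and then letting $\varepsilon\to 0$ at the end; your version is precisely the formal limit of this, and the fix is immediate.
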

\begin{proof}
	The leftmost inequality is straightforward, we thus focus on proving the rightmost one. Without loss of generality, we can assume that $x_0 = 0$. 
Let $u \in C^\infty_0(B_R)$ be such that $u\ge 1$ on $\Sigma$. For every $0<\varepsilon<d/2$ we take the Lipschitz cut-off function, compactly supported in $B_r$, given by
\[
\eta(x)=\min\left\{\left(\frac{(r-\varepsilon)-|x|}{(r-\varepsilon)-(r-d)}\right)_+,\,1\right\}.
\]
Observe that by construction the function $\psi=\eta\,u$ is a Lipschitz function, compactly supported in $B_r$ and such that $\psi\ge 1$ on $\Sigma$.
Thus, this is an admissible function to test the definition of relative $p-$capacity, thanks to Remark \ref{oss:altrefunzioni}. By using Minkowski's inequality and the properties of $\eta$, we get
\begin{equation}
 \begin{split}
 \label{tony}
		\Big(\mathrm{cap}_p(\Sigma; B_{r})\Big)^\frac{1}{p} &\leq \frac{1}{d-\varepsilon}\, \|u\|_{L^p(B_{r})} + \|\nabla u\|_{L^p(B_{r})} \\
		&\le \frac{1}{d-\varepsilon}\, \|u\|_{L^p(B_R)} + \|\nabla u\|_{L^p(B_R)}\\
	&\leq \left(\frac{1}{d-\varepsilon}\, \frac{R}{\lambda_p(B_1)^{\frac{1}{p}}}+1 \right) \|\nabla u \|_{L^p(B_R)}.
	\end{split}
\end{equation}
In the third inequality we also used Poincar\'e's inequality for the set $B_R$.
By taking the limit as $\varepsilon$ goes to $0$ and using the arbitrariness of $u$, we get the claimed estimate.
\end{proof}
We use the previous result in combination with a Maz'ya-Poincar\'e type inequality and a tiling of $\mathbb{R}^N$ made by cubes. This leads to the lower bound of the Main Theorem.
\begin{theorem} 
\label{teo:lower_bound}
	Let $1 \leq p \leq N$, there exists a constant $\sigma_{N,p} >0$ such that for every $\Omega \subseteq \mathbb{R}^N$ open set and every $0<\gamma<1$, we have
	\begin{equation}
		\lambda_p(\Omega) \geq  \sigma_{N,p}\,\gamma\,\left(\frac{1}{R_{p,\gamma}(\Omega)}\right)^p.
	\end{equation}
\end{theorem}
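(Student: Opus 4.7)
The plan is to combine the lower bound on the relative capacity of the hole $\overline{B_r(x_0)} \setminus \Omega$, which comes directly from the definition of $R_{p,\gamma}(\Omega)$, with a Maz'ya--Poincar\'e type inequality on cubes summed over a tiling of $\mathbb{R}^N$.

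First I fix any $r > R_{p,\gamma}(\Omega)$. By the definition of the capacitary inradius, for every $x_0 \in \mathbb{R}^N$ the compact set $\overline{B_r(x_0)} \setminus \Omega$ fails to be $(p,\gamma)$-negligible, which is exactly the inequality
\[
\mathrm{cap}_p\bigl(\overline{B_r(x_0)} \setminus \Omega;\, B_{2r}(x_0)\bigr) \geq \gamma\, \mathrm{cap}_p\bigl(\overline{B_r(x_0)};\, B_{2r}(x_0)\bigr).
\]
The explicit formulas \eqref{cappalla0}, \eqref{cappalla}, \eqref{cappallaconf} give the right-hand side as a dimensional constant times $r^{N-p}$ when $p<N$, and a pure dimensional constant when $p=N$; in either case, this delivers a quantitative, linear-in-$\gamma$ lower bound on the capacity of the hole, uniform in $x_0$.

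Next I tile $\mathbb{R}^N$ by translated cubes $\{Q_r(x_i)\}_i$ of half-side $r$ with pairwise disjoint interiors. Each cube contains its inscribed ball $B_r(x_i)$, so the set $\Sigma_i := \overline{B_r(x_i)} \setminus \Omega$ lies in $\overline{Q_r(x_i)}$. Given any $u \in C^\infty_0(\Omega)$, extended by zero, the function vanishes on every such $\Sigma_i$, and on each cube I apply the ``mixed'' Maz'ya--Poincar\'e inequality taken from \cite{BozBra}:
\[
\int_{Q_r(x_i)} |u|^p\,dx \;\leq\; C_{N,p}\,\frac{|Q_r(x_i)|}{\mathrm{cap}_p(\Sigma_i;\, B_{2r}(x_i))}\, \int_{Q_r(x_i)} |\nabla u|^p\,dx.
\]
The distinctive feature is that the penalization uses the capacity of $\Sigma_i$ relative to the concentric ball $B_{2r}(x_i)$, exactly as in the definition of $R_{p,\gamma}(\Omega)$, rather than relative to a dilated concentric cube as in the classical Maz'ya inequality. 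Plugging in the capacity bound above and using $|Q_r(x_i)| = (2r)^N$ against $\mathrm{cap}_p(\Sigma_i; B_{2r}(x_i)) \gtrsim \gamma\, r^{N-p}$, the coefficient collapses to $(C'_{N,p}/\gamma)\, r^p$ in all three cases $1\le p<N$, $p=N$ (and degenerately $p=1$ with the same book-keeping).

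Finally I sum over the tiling: since the cubes have disjoint interiors and $u$ vanishes outside $\Omega$, both sides add up to the full integrals, yielding
\[
\int_\Omega |u|^p\,dx \;\leq\; \frac{C'_{N,p}}{\gamma}\, r^p \int_\Omega |\nabla u|^p\,dx,
\]
with no multiplicity factor. Taking the infimum over $u$ gives $\lambda_p(\Omega) \geq \gamma/(C'_{N,p}\, r^p)$, and letting $r \searrow R_{p,\gamma}(\Omega)$ delivers the stated estimate with $\sigma_{N,p} := 1/C'_{N,p}$. The main obstacle is establishing the mixed Maz'ya--Poincar\'e inequality in which the integration on both sides is over a cube but the capacity term refers to the circumscribed concentric ball; this mismatch is precisely what allows the cube-tiling to replace the ball-covering argument of \cite{MS} and thereby eliminate the dimensional multiplicity factor, while one must also verify that the constants stay uniform in $r$ throughout $1\le p\le N$, in particular handling the scale-invariant conformal case $p=N$.
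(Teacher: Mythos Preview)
Your proposal is correct and follows the same strategy as the paper: lower-bound the capacity of $\overline{B_r(x_0)}\setminus\Omega$ via the definition of $R_{p,\gamma}$, apply the mixed cube/ball Maz'ya--Poincar\'e inequality of \cite{BozBra} on each cube of a tiling, and sum. One small point: the inequality of \cite[Theorem 2.5]{BozBra} as invoked in the paper produces the capacity relative to $B_{2\sqrt{N}\,r}(x_i)$ (a ball large enough to contain the cube $Q_r(x_i)$), not $B_{2r}(x_i)$ as you wrote; the paper then inserts the elementary comparison \eqref{comparison_cap} between capacities relative to concentric balls to pass back to $B_{2r}(x_i)$, a step you have tacitly folded into your constant $C_{N,p}$.
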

\begin{proof}
We first observe that if $R_{p,\gamma}(\Omega) = +\infty$, then there is nothing to prove. Thus, let us
 %\marginpar{l'upper bound ci dice questo,  e' forse meglio invertire lower e upper bound nell'esposizione?}
%	 $\lambda_p(\Omega) = 0$.
assume that $R_{p,\gamma}(\Omega) < +\infty$. Let $r > R_{p,\gamma}(\Omega)$ and let $u \in C^{\infty}_0(\Omega)$, extended by $0$ to the complement $\mathbb{R}^N\setminus\Omega$. For every $x_0 \in \mathbb{R}^N$, by definition of capacitary inradius  we have 
	 \begin{equation}
	 \label{diomadonna0}
	 \begin{split}
		\mathrm{cap}_p(\overline{B_r(x_0)} \setminus \Omega; B_{2r}(x_0)) &>\gamma\, \mathrm{cap}_p(\overline{B_r(x_0)}; B_{2r}(x_0))= \gamma\, \mathrm{cap}_p(\overline{B_1}; B_2)\, r^{N-p}.
	\end{split}
	\end{equation}
	The last identity simply follows from the scaling properties of the relative $p-$capacity (see Remark \ref{oss:scaling}).
We now consider the cube $Q_r(x_0)$ concentric with $B_r(x_0)$. We observe that $u$ is a $C^\infty$ function on $\overline{Q_r(x_0)}$, which vanishes on the compact subset $\overline{B_r(x_0)} \setminus \Omega\subseteq \overline{Q_r(x_0)}$. Thus, we can use the Maz'ya-Poincar\'e inequality of \cite[Theorem 14.1.2]{Maz} (more precisely, we use its slight variant of \cite[Theorem 2.5]{BozBra}) to infer that 
	\begin{equation*}
	\frac{\mathscr{C}}{r^{\frac{N}{p}}}\, \mathrm{cap}_p\left(\overline{B_r(x_0)} \setminus \Omega; B_{2\sqrt{N}r}(x_0)\right)^{\frac{1}{p}}\, \|u\|_{L^p(Q_{r}(x_0))} \leq \|\nabla u\|_{L^p(Q_{r}(x_0))},
	\end{equation*}
	where $\mathscr{C}$ is the same constant as in \cite[Theorem 2.5]{BozBra}. Furthermore, by applying \eqref{comparison_cap} we get 
	\[
	\frac{\mathscr{C}}{\left(\dfrac{2\,\sqrt{N}}{\lambda_p(B_1)^{\frac{1}{p}}}+1 \right)^{\frac{1}{p}}}\, \frac{1}{r^{\frac{N}{p}}}\, \mathrm{cap}_p\left(\overline{B_r(x_0)} \setminus \Omega; B_{2r}(x_0)\right)^{\frac{1}{p}}\, \|u\|_{L^p(Q_{r}(x_0))} \leq \|\nabla u\|_{L^p(Q_{r}(x_0))}.
	\]
We can further apply \eqref{diomadonna0}, in order to estimate from below the left-hand side. By raising to the power $p$, this gives
	\begin{equation} 
	\label{diomadonna}
		\frac{\mathscr{C}^p\,\mathrm{cap}_p(\overline{B_1}; B_2)}{\dfrac{2\,\sqrt{N}}{\lambda_p(B_1)^{\frac{1}{p}}}+1}\, \frac{\gamma}{r^p}\,\|u\|_{L^p(Q_{r}(x_0))} \leq \|\nabla u\|_{L^p(Q_{r}(x_0))}.
	\end{equation}
By using this estimate for a family of disjoint cubes having radius $r$ and tiling the whole space, summing up we get	
\[	
\frac{\mathscr{C}^p\,\mathrm{cap}_p(\overline{B_1}; B_2)}{\dfrac{2\,\sqrt{N}}{\lambda_p(B_1)^{\frac{1}{p}}}+1}\, \frac{\gamma}{r^p}\,\|u\|^p_{L^p(\Omega)} \leq \|\nabla u \|^p_{L^p(\Omega)}.
\]
This concludes the proof by arbitrariness of $u$.
	\end{proof} 
\begin{remark}	
By inspecting the proof above, we see that we get the following constant
\[
\sigma_{N,p}=\frac{\mathscr{C}^p\,\mathrm{cap}_p(\overline{B_1}; B_2)}{\left(\dfrac{2\,\sqrt{N}}{\lambda_p(B_1)^{\frac{1}{p}}}+1 \right)}.
\]
A possible value for the constant $\mathscr{C}=\mathscr{C}(N,p)>0$ can be found in \cite[Remark 2.6]{BozBra}, by taking $q=p$ there and $D/d=2\,\sqrt{N}$.
\end{remark}

\section{Proof of the Main Theorem: upper bound}
\label{sec:5}

\begin{theorem} 
\label{teo:upper_bound}
Let $1 \leq p \leq N$, $0<\gamma<1$ and let $\Omega\subseteq\mathbb{R}^N$ be an open set such that $R_{p,\gamma}(\Omega)<+\infty$. Then we have 
\[
\lambda_p(\Omega)\le C_{N,p,\gamma}\,\left(\frac{1}{R_{p,\gamma}(\Omega)}\right)^p.
\]
On the other hand, if $R_{p,\gamma}(\Omega)=+\infty$, then we have $\lambda_p(\Omega)=0$.
\end{theorem}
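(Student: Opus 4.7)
If $R_{p,\gamma}(\Omega)=+\infty$, the conclusion $\lambda_p(\Omega)=0$ will follow from the finite-case construction by letting $r\to+\infty$, so I focus on the case $R_{p,\gamma}(\Omega)<+\infty$. By definition of the capacitary inradius, for every $r<R_{p,\gamma}(\Omega)$ there exists $x_0\in\mathbb{R}^N$ such that the bad set $\Sigma:=\overline{B_r(x_0)}\setminus\Omega$ is $(p,\gamma)$-negligible, i.e.
\[
\mathrm{cap}_p(\Sigma;B_{2r}(x_0))\le \gamma\,\mathrm{cap}_p(\overline{B_r(x_0)};B_{2r}(x_0)).
\]
After letting $r\nearrow R_{p,\gamma}(\Omega)$, the task reduces to exhibiting $\varphi_r\in C^\infty_0(\Omega)$ supported in $B_r(x_0)$ with Rayleigh quotient bounded by $C_{N,p,\gamma}/r^p$.

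\textbf{Test function.} Using \eqref{capalt} and a standard approximation, fix an almost-minimizer $\psi\in C^\infty_0(B_{2r}(x_0))$ for the relative capacity, with $0\le \psi\le 1$, $\psi\equiv 1$ on an open neighborhood of $\Sigma$, and
\[
\int|\nabla\psi|^p\,dx\le (1+\varepsilon)\,\mathrm{cap}_p(\Sigma;B_{2r}(x_0))\le (1+\varepsilon)\,\gamma\,\mathrm{cap}_p(\overline{B_r(x_0)};B_{2r}(x_0)).
\]
Pick a radial cut-off $\eta\in C^\infty_0(B_r(x_0))$ with $\eta\equiv 1$ on $B_{r/2}(x_0)$ and $|\nabla\eta|\le 3/r$, and set
\[
\varphi_r:=\eta\,(1-\psi).
\]
Since $\psi\equiv 1$ on an open neighborhood of $\Sigma$, the support of $(1-\psi)$ is disjoint from $\Sigma$, so $\mathrm{supp}\,\varphi_r$ is a compact subset of $B_r(x_0)\cap\Omega$ and $\varphi_r\in C^\infty_0(\Omega)$.

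\textbf{Energy and mass estimates.} Expanding $|\nabla\varphi_r|^p\le 2^{p-1}\bigl(|\nabla\eta|^p(1-\psi)^p+\eta^p|\nabla\psi|^p\bigr)$ and using the scaling \eqref{cappalla}--\eqref{cappallaconf} of $\mathrm{cap}_p(\overline{B_r};B_{2r})$ together with the $(p,\gamma)$-negligibility bound, we get $\int|\nabla\varphi_r|^p\,dx\le C^{(1)}_{N,p}\,r^{N-p}$ (an $r$-independent constant when $p=N$, since $r^{N-p}=1$ and both terms are bounded by \eqref{cappallaconf}). The crux is the lower bound
\[
\int|\varphi_r|^p\,dx\ge\int_{B_{r/2}(x_0)}(1-\psi)^p\,dx\ge c_{N,p,\gamma}\,r^N,
\]
with $c_{N,p,\gamma}>0$ for every $\gamma<1$ and allowed to vanish only as $\gamma\nearrow 1$. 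This is precisely where the sharp $L^1$--$L^p$ Poincar\'e-type inequality of Proposition~\ref{coro:sharp1p} enters, in its handy form \eqref{torsiontype3}: applied to $\psi\in W^{1,p}_0(B_{2r}(x_0))$ on shells $S_{r_1,r_2}\subset B_{r/2}(x_0)$, and combined with the monotonicity of the relative capacity of concentric balls in Lemma~\ref{lm:cazzatella}, it yields a quantitative upper bound on $\fint_{S_{r_1,r_2}}\psi$ of the form $(\gamma\,K(N,p))^{1/p}(1+\varepsilon)^{1/p}$. Integrating this shell information through a suitable family of shells filling $B_{r/2}(x_0)$, and coupling with the convexity inequality $(1-t)^p\ge 1-p\,t$ (valid for $t\in[0,1]$, $p\ge1$), produces the required lower bound, with a coefficient $c_{N,p,\gamma}$ degenerating in a controlled way as $\gamma\nearrow 1$.

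\textbf{Main obstacle.} The delicate point is exactly this lower bound on the mass when $\gamma$ is close to $1$: a crude approach via the Poincar\'e inequality on $B_{2r}(x_0)$ and Markov's inequality only covers $\gamma$ below an explicit $N$-dependent threshold $\gamma_{N,p}<1$, which is exactly the restriction appearing in the older \cite[Theorem 11.4.1]{Maz85} discussed after the Main Theorem. To handle the full range $0<\gamma<1$, one must work with the \emph{sharp} constant of Proposition~\ref{coro:sharp1p}, whose fine structure is captured by formula \eqref{secret} and made tractable via the Gr\"otzsch-type Lemma~\ref{lm:cazzatella}; this yields a finite $C_{N,p,\gamma}$ for every $\gamma<1$, diverging in the controlled manner asserted in the Main Theorem as $\gamma\nearrow 1$.
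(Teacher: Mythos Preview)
Your construction has the right overall shape, but the key quantitative choice is wrong and this is exactly what prevents you from covering the full range $0<\gamma<1$. By fixing the cut-off $\eta$ to be $\equiv 1$ on $B_{r/2}(x_0)$, you force your shells $S_{r_1,r_2}$ to sit inside $B_{r/2}(x_0)$, so $r_1\le r/2$. Applying \eqref{torsiontype3} then gives
\[
\Big(\fint_{S_{r_1,r_2}}\psi\,dx\Big)^p\le \frac{(1+\varepsilon)\,\gamma\,\mathrm{cap}_p(\overline{B_r};B_{2r})}{\mathrm{cap}_p(\overline{B_{r_1}};B_{2r})}\,,
\]
and for $r_1\le r/2$ the capacity ratio on the right is bounded below by a \emph{fixed} constant $K(N,p)>1$ (e.g.\ $K=3$ for $N=3$, $p=2$). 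Hence the bound $\fint\psi\le(\gamma K)^{1/p}$ is vacuous as soon as $\gamma\ge 1/K$: it says nothing beyond $0\le\psi\le1$, and no amount of ``integrating over a family of shells filling $B_{r/2}$'' or invoking $(1-t)^p\ge 1-pt$ can repair this, because the same obstruction applies to every shell inside $B_{r/2}$. This is precisely the $\gamma\le\gamma_{N,p}$ threshold you yourself flag as the obstacle; your argument does not actually get past it.

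The paper's proof avoids this by making the cut-off scale \emph{$\gamma$-dependent}: one takes $\eta\equiv1$ on $B_{(1-\varepsilon)r}$ with $\varepsilon=\varepsilon(\gamma)$ small, and works on the single thin shell $S_{(1-2\varepsilon)r,\,(1-\varepsilon)r}$ near $\partial B_r$. There $r_1=(1-2\varepsilon)r$ is close to $r$, so the ratio $\mathrm{cap}_p(\overline{B_r};B_{2r})/\mathrm{cap}_p(\overline{B_{r_1}};B_{2r})$ is close to $1$; for any $\gamma<1$ one can choose $\varepsilon$ so that the product with $\gamma$ stays below $1$ (the paper enforces \eqref{scelta_eps_subconformal}). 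Jensen's inequality on that shell, not the pointwise bound $(1-t)^p\ge1-pt$, then gives the lower bound on the mass. The price is that $|S_{r_1,r_2}|$ and the gradient of $\eta$ degenerate with $\varepsilon$, which is exactly what produces the divergence of $C_{N,p,\gamma}$ as $\gamma\nearrow1$. So the missing idea in your sketch is: the shell (and hence the cut-off) must be pushed to the boundary of $B_r$, at a $\gamma$-dependent distance, rather than sitting at a fixed scale $r/2$.
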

\begin{proof}
Let $\gamma\in(0,1)$ be fixed, we take a ball $B_r(x_0)$ such that 
\begin{equation}
\label{ipotesiF}
\mathrm{cap}_p\left(\overline{B_r(x_0)}\setminus\Omega;B_{2\,r}(x_0)\right)\le \gamma\,\mathrm{cap}_p\left(\overline{B_r(x_0)};B_{2\,r}(x_0)\right).
\end{equation}
We will show that for every such $r$, we can bound
\begin{equation}
\label{volio}
\lambda_p(\Omega)\le \frac{C_{N,p,\gamma}}{r^p}.
\end{equation}
By taking the supremum over the admissible $r$, we will eventually get the result. In particular, if $R_{p,\gamma}(\Omega)=+\infty$ the previous upper bound will prove that $\lambda_p(\Omega)=0$.
\vskip.2cm\noindent
We set for simplicity $F=\overline{B_r(x_0)}\setminus\Omega$. For every $\delta>0$, we take $\varphi_\delta\in \mathrm{Lip}_0(B_{2\,r}(x_0))$ such that 
\begin{equation}
\label{quasiottima}
0\le \varphi_\delta\le 1,\qquad \varphi_\delta = 1\ \text{on}\ F\qquad \text{ and }\qquad \int_{B_{2\,r}(x_0)} |\nabla \varphi_\delta|^p\,dx\le \delta+\mathrm{cap}_p\left(F;B_{2\,r}(x_0)\right).
\end{equation}
Such a function exists, by recalling \eqref{capalt}. Without loss of generality, we can suppose that $x_0$ coincides with the origin.
\par
We observe at first that by density we have
\[
\lambda_p(\Omega)=\inf_{\varphi\in C^\infty_0(\Omega)} \left\{\int_\Omega |\nabla \varphi|^p\,dx\, :\, \|\varphi\|_{L^p(\Omega)}=1\right\}=\inf_{\varphi\in W^{1,p}_0(\Omega)} \left\{\int_\Omega |\nabla \varphi|^p\,dx\, :\, \|\varphi\|_{L^p(\Omega)}=1\right\}.
\]
We fix $0<\varepsilon<1/2$ and take a Lipschitz cut-off function $\eta$ defined on $\overline{B_r}$, such that 
\[
0\le \eta\le 1,\quad \eta\equiv 1 \text{ on } B_{(1-\varepsilon)\,r},\quad \eta\equiv 0\ \text{on}\ \partial B_r,\quad \|\nabla \eta\|_{L^\infty}= \frac{1}{\varepsilon\,r}.
\]
We use the test function $\psi=(1-\varphi_\delta)\,\eta/\|(1-\varphi_\delta)\,\eta\|_{L^p(\Omega)}$ in the definition of $\lambda_p(\Omega)$. Observe that this is a feasible test function: indeed, by construction we have that $\psi$ is a Lipschitz function on the whole $\mathbb{R}^N$. Moreover, we have that 
\[
\psi\equiv 0\qquad \text{ on } \partial(B_r\cap \Omega)\subseteq (\partial B_r\cap \overline\Omega)\cup(\partial \Omega\cap \overline{B_r}). 
\]
More precisely, we have 
\[
(1-\varphi_\delta)\equiv 0\qquad \text{ on } F= \overline{B_r}\setminus\Omega\supseteq \partial \Omega\cap \overline{B_r},
\]
and
\[
\eta\equiv 0\qquad \text{ on } \partial B_r\supseteq \partial B_r\cap \overline\Omega.
\]
Thus, by \cite[Theorem 9.17 \& Remark 19]{Bre} we get that 
\[
\psi=(1-\varphi_\delta)\,\eta\in W^{1,p}_0(B_r\cap \Omega)\subseteq W^{1,p}_0(\Omega).
\]
This gives 
\[
\begin{split}
\lambda_p(\Omega)&\le \frac{\displaystyle \int_\Omega |(1-\varphi_\delta)\,\nabla \eta-\eta\,\nabla \varphi_\delta|^p\,dx}{\displaystyle \int_\Omega (1-\varphi_\delta)^p\,\eta^p\,dx}\le 2^{p-1}\,\frac{\displaystyle \int_\Omega (1-\varphi_\delta)^p\,|\nabla \eta|^p\,dx+\int_\Omega \eta^p\,|\nabla \varphi_\delta|^p\,dx}{\displaystyle \int_\Omega (1-\varphi_\delta)^p\,\eta^p\,dx}.
\end{split}
\]
By using the properties of $\eta$, we get
\[
\lambda_p(\Omega)\,\int_{B_{(1-\varepsilon)\,r}} (1-\varphi_\delta)^p\,dx\le 2^{p-1}\,\left[\frac{1}{\varepsilon^p\,r^p}\,\int_{B_r\setminus B_{(1-\varepsilon)\,r}} (1-\varphi_\delta)^p\,dx+\int_{B_r} |\nabla \varphi_\delta|^p\,dx\right].
\]
We also use \eqref{quasiottima} and \eqref{ipotesiF} on the right-hand side: this leads to 
\[
\lambda_p(\Omega)\,\int_{B_{(1-\varepsilon)\,r}} (1-\varphi_\delta)^p\,dx\le 2^{p-1}\,\left[\frac{\omega_N\,r^N}{\varepsilon^p\,r^p}\,(1-(1-\varepsilon)^N)+\delta +\gamma\,\mathrm{cap}_p\Big(\overline{B_r};B_{2\,r}\Big)\right].
\]
We also observe that by convexity of the map $\tau\mapsto \tau^N$ we have 
\[
t^N\ge 1+N\,(t-1),\qquad \text{ for } t\ge 0,
\]
and thus 
\[
(1-\varepsilon)^N\ge 1-N\,\varepsilon\qquad \text{ that is }\qquad 1-(1-\varepsilon)^N\le N\,\varepsilon.
\]
This leads us to 
\begin{equation}
\label{passo0}
%\[
\lambda_p(\Omega)\,\int_{B_{(1-\varepsilon)\,r}} (1-\varphi_\delta)^p\,dx\le \frac{2^{p-1}}{r^p}\,r^{N}\,\left[\frac{N\,\omega_N}{\varepsilon^{p-1}}+\delta +\gamma\,\mathrm{cap}_p\Big(\overline{B_1};B_{2}\Big)\right].
%\]
\end{equation}
Observe that we also used the scaling properties of the $p-$capacity.
We now wish to give a lower bound for the leftmost integral. To this aim, we set
\[
r_1=(1-\ell)\,r,\qquad r_2=\left(1-\varepsilon\right)\,r,
\]
with $1>\ell>\varepsilon>0$ and $0<\varepsilon<1/2$.
By defining the spherical shell
\[
S_{r_1,r_2}:=B_{r_2}\setminus \overline{B_{r_1}}=\left\{x\in\mathbb{R}^N\, :\, r_1<|x|<r_2\right\},
\]
from the estimate above we obviously get
\begin{equation}
\label{passo1}
\lambda_p(\Omega)\,\fint_{S_{r_1,r_2}} (1-\varphi_\delta)^p\,dx\le \frac{2^{p-1}}{r^p}\,\frac{r^N}{|S_{r_1,r_2}|}\,\left[\frac{N\,\omega_N}{\varepsilon^{p-1}}+\delta +\gamma\,\mathrm{cap}_p\Big(\overline{B_1};B_{2}\Big)\right].
\end{equation}
By Jensen's inequality we have\footnote{For $p=1$, this is not needed, of course.} 
\[
\fint_{S_{r_1,r_2}} (1-\varphi_\delta)^p\,dx\ge \left(\fint_{S_{r_1,r_2}}(1-\varphi_\delta)\,dx \right)^p.
\] 
By inserting this estimate in \eqref{passo1}, we get
\begin{equation}
\label{dai!}
\lambda_p(\Omega)\,\left(1-\fint_{S_{r_1,r_2}}\varphi_\delta\,dx \right)^p\le \frac{2^{p-1}}{r^p}\,\frac{r^N}{|S_{r_1,r_2}|}\,\left[\frac{N\,\omega_N}{\varepsilon^{p-1}}+\delta+\gamma\,\mathrm{cap}_p\Big(\overline{B_1};B_{2}\Big)\right].
\end{equation}
In order to conclude, it would be sufficient to show that 
\[
1-\fint_{S_{r_1,r_2}}\varphi_\delta\,dx\ge \frac{1}{C},
\]
for some positive constant $C$, depending only on $N,p$ and $\gamma$. This is the key point, where the results of Section \ref{sec:3} will be crucial.
We now distinguish the case $1<p\le N$ and the case $p=1$.
\vskip.2cm\noindent
{\bf 1. Case $1 < p \leq N$.} 
To this aim, we can use the Poincar\'e--type inequality of \eqref{torsiontype3}, with $R=2\,r$. This yields
\[
\begin{split}
\left(\fint_{S_{r_1,r_2}} \varphi_\delta\,dx\right)^p&\le \frac{1}{\mathrm{cap}_p(\overline{B_{r_1}};B_{2\,r})}\,\int_{B_{2\,r}}|\nabla \varphi_\delta|^p\,dx.
\end{split}
\]
By using again \eqref{quasiottima} and \eqref{ipotesiF} in order to estimate the rightmost integral, we then obtain
\[
\fint_{S_{r_1,r_2}} \varphi_\delta\,dx\le\left(\frac{\delta+\gamma\,\mathrm{cap}_p\left(\overline{B_r};B_{2\,r}\right)}{\mathrm{cap}_p\Big(\overline{B_{r_1}};B_{2\,r}\Big)}\right)^\frac{1}{p}.
\]
By using this estimate in \eqref{dai!}, we obtain
\[
\lambda_p(\Omega)\,\left(1-\left(\frac{\delta+\gamma\,\mathrm{cap}_p\left(\overline{B_r};B_{2\,r}\right)}{\mathrm{cap}_p\Big(\overline{B_{r_1}};B_{2\,r}\Big)}\right)^\frac{1}{p}\right)^p\le \frac{2^{p-1}}{r^p}\,\frac{r^N}{|S_{r_1,r_2}|}\,\left[\frac{N\,\omega_N}{\varepsilon^{p-1}}+\delta+ \gamma\,\mathrm{cap}_p\Big(\overline{B_1};B_{2}\Big)\right].
\]
This is valid for every $\delta>0$, thus we can eliminate it by taking the limit as $\delta$ goes to $0$. We thus obtain
\begin{equation}
\label{quasifatto}
\lambda_p(\Omega)\,\left(1-\left(\frac{\mathrm{cap}_p\left(\overline{B_r};B_{2\,r}\right)}{\mathrm{cap}_p\Big(\overline{B_{r_1}};B_{2\,r}\Big)}\right)^\frac{1}{p}\,\gamma^\frac{1}{p}\right)^p\le \frac{2^{p-1}}{r^p}\,\frac{r^N}{|S_{r_1,r_2}|}\,\left[\frac{N\,\omega_N}{\varepsilon^{p-1}}+\gamma\,\mathrm{cap}_p\Big(\overline{B_1};B_{2}\Big)\right].
\end{equation}
Now, we make the choice $\ell=2\,\varepsilon$, so that 
\[
r_1=(1-2\,\varepsilon)\,r,\qquad r_2=\left(1-\varepsilon\right)\,r,
\]
and observe that
\[
\left(\frac{\mathrm{cap}_p\left(\overline{B_r};B_{2\,r}\right)}{\mathrm{cap}_p\Big(\overline{B_{r_1}};B_{2\,r}\Big)}\right)^\frac{1}{p}>1\qquad \text{ and }\qquad \lim_{\varepsilon\to 0}\left(\frac{\mathrm{cap}_p\left(\overline{B_r};B_{2\,r}\right)}{\mathrm{cap}_p\Big(\overline{B_{r_1}};B_{2\,r}\Big)}\right)^\frac{1}{p}=1.
\]
Thanks to the presence of the factor $\gamma<1$, this implies that up to choosing $\varepsilon>0$ small enough (depending on $\gamma$), we can uniformly bound from below the factor  multiplying $\lambda_p(\Omega)$ in \eqref{quasifatto}. Of course, the smaller we will choose $\varepsilon$, the larger the right-hand side of \eqref{quasifatto} will be (because of the factor $|S_{r_1,r_2}|$).
\par
In particular, we want to choose $0<\varepsilon<1/2$ so that 
\begin{equation} 
\label{scelta_eps_subconformal}
	1-\left(\frac{\mathrm{cap}_p\left(\overline{B_r};B_{2\,r}\right)}{\mathrm{cap}_p\Big(\overline{B_{r_1}};B_{2\,r}\Big)}\right)^\frac{1}{p}\,\gamma^\frac{1}{p}\ge \frac{1-\gamma^\frac{1}{p}}{2}.
\end{equation}
This is the same as
\begin{equation} 
\label{scelta_eps_subconformal2}
\left(\frac{1+\gamma^\frac{1}{p}}{2\,\gamma^\frac{1}{p}}\right)^\frac{p}{p-1}\ge \frac{1}{(1-2\,\varepsilon)^\frac{N-p}{p-1}}\, \left(\frac{\mathrm{cap}_p\left(\overline{B_1};B_{2}\right)}{\mathrm{cap}_p\Big(\overline{B_1};B_{2/(1-2\,\varepsilon)}\Big)}\right)^\frac{1}{p-1},
\end{equation}
where we also used Remark \ref{oss:scaling}.
We now need to further distinguish the case $p<N$ and $p=N$.
\vskip.2cm\noindent
{\it 1.A Case $1 < p <N$.}
By recalling \eqref{cappalla}, the condition \eqref{scelta_eps_subconformal2} is equivalent to
\[
\frac{1}{(1-2\,\varepsilon)^\frac{N-p}{p-1}}\,\frac{1}{1-\left(\dfrac{1}{2}\right)^\frac{N-p}{p-1}}\,\left(1-\left(\dfrac{1-2\,\varepsilon}{2}\right)^\frac{N-p}{p-1}\right)\le \left(\frac{1+\gamma^\frac{1}{p}}{2\,\gamma^\frac{1}{p}}\right)^\frac{p}{p-1}.
\]
By simplifying a bit the expression, this is equivalent to
\[
\frac{2^\frac{N-p}{p-1}}{2^\frac{N-p}{p-1}-1}\,\frac{1}{(1-2\,\varepsilon)^\frac{N-p}{p-1}}-\frac{1}{2^\frac{N-p}{p-1}-1}\le \left(\frac{1+\gamma^\frac{1}{p}}{2\,\gamma^\frac{1}{p}}\right)^\frac{p}{p-1}.
\]
In turn, this can be recast as follows
\[
(1-2\,\varepsilon)^\frac{N-p}{p-1}\,\left[\left(\frac{1+\gamma^\frac{1}{p}}{2\,\gamma^\frac{1}{p}}\right)^\frac{p}{p-1}+\frac{1}{2^\frac{N-p}{p-1}-1}\right]\ge \frac{2^\frac{N-p}{p-1}}{2^\frac{N-p}{p-1}-1}.
\]
After some simple (yet tedious) computations, we get that it is sufficient to choose
\begin{equation}
\label{epsilon0}
\varepsilon\le \varepsilon_0:=\min\left\{\frac{1}{4\,(N-1)},\, \frac{1}{2}-\left[\left(2^\frac{N-p}{p-1}-1\right)\,\left(\frac{1+\gamma^\frac{1}{p}}{2\,\gamma^\frac{1}{p}}\right)^\frac{p}{p-1}+1\right]^{-\frac{p-1}{N-p}}\right\}.
\end{equation}
We observe that $\varepsilon_0\le 1/4$, in particular.
Finally, by making this choice for $\varepsilon$, we get from \eqref{quasifatto} and \eqref{scelta_eps_subconformal}
\[
\lambda_p(\Omega)\le \frac{1}{r^p}\,\left(\frac{2}{1-\gamma^\frac{1}{p}}\right)^p\,\frac{2^{p-1}}{\omega_N\,\Big((1-\varepsilon_0)^N-(1-2\,\varepsilon_0)^N\Big)}\,\left[\frac{N\,\omega_N}{\varepsilon_0^{p-1}}+\gamma\,\mathrm{cap}_p\Big(\overline{B_1};B_{2}\Big)\right].
\]
We eventually get the claimed estimate \eqref{volio} with
\[
C_{N,p,\gamma}=\left(\frac{2}{1-\gamma^\frac{1}{p}}\right)^p\,2^p \left[\frac{1}{\varepsilon_0^p}+\frac{\gamma}{\varepsilon_0}\,\frac{\mathrm{cap}_p\Big(\overline{B_1};B_{2}\Big)}{N\,\omega_N}\right],
\]
once observed that\footnote{For the first inequality, use that 
\[
(1-\varepsilon_0)^N-(1-2\,\varepsilon_0)^N=N\,\int_{1-2\,\varepsilon_0}^{1-\varepsilon_0} \tau^{N-1}\,d\tau\ge N\,(1-2\,\varepsilon_0)^{N-1}\,\varepsilon_0.
\]
In the second inequality, use Bernoulli's inequality and the fact that $\varepsilon_0\le 1/(4\,(N-1))$.} 
\[
(1-\varepsilon_0)^N- (1-2\,\varepsilon_0)^N\ge N\,(1-2\,\varepsilon_0)^{N-1}\,\varepsilon_0\ge \frac{N}{2}\,\varepsilon_0.
\]
\vskip.2cm\noindent
{\it 1.B Case $p=N$.}
By recalling \eqref{cappallaconf}, the first condition \eqref{scelta_eps_subconformal2} is equivalent to
\[
\log \dfrac{2}{1-\varepsilon}\le \left(\frac{1+\gamma^\frac{1}{N}}{2\,\gamma^\frac{1}{N}}\right)^\frac{N}{N-1}\,\log 2.
\]
This is equivalent to
\[
\log (1-\varepsilon)\ge \left[1-\left(\frac{1+\gamma^\frac{1}{N}}{2\,\gamma^\frac{1}{N}}\right)^\frac{N}{N-1}\right]\,\log 2.
\]
By exponentiating, we obtain
\[
1-\varepsilon\ge 2^{1-\alpha_{N,\gamma}},\qquad \text{with}\ \alpha_{N,\gamma}:=\left(\frac{1+\gamma^\frac{1}{N}}{2\,\gamma^\frac{1}{N}}\right)^\frac{N}{N-1}.
\] 
If we choose
\begin{equation}
\label{epsilon0N}
\varepsilon\le \varepsilon_0:=\min\left\{1-2^{1-\alpha_{N,\gamma}},\frac{1}{4\,(N-1)}\right\},
\end{equation}
we then obtain the desired property. The conclusion now follows as before.
\vskip.2cm\noindent
{\bf 2. Case $p=1$.} We go back to \eqref{dai!}. 
As done before, by combining \eqref{torsiontype21} with assumptions \eqref{quasiottima} and \eqref{ipotesiF}, we infer that 
\[
\begin{split}
	\fint_{S_{r_1,r_2}} \varphi_\delta \,dx &\leq \frac{1}{\mathrm{cap}_1(\overline{B_{r_2}}; B_{2r})}\, \int_{B_{2r}} |\nabla \varphi_\delta|\,dx \leq  \frac{\delta+\gamma\,\mathrm{cap}_1(\overline{B_{r}}; B_{2r})}{\mathrm{cap}_1(\overline{B_{r_2}}; B_{2r})}.
\end{split}
\]
We use this upper bound in the left-hand side of \eqref{dai!}. This yields
\[
\lambda_1(\Omega)\,\left(1-\frac{\delta+\gamma\,\mathrm{cap}_1(\overline{B_{r}}; B_{2r})}{\mathrm{cap}_1(\overline{B_{r_2}}; B_{2r})}\right)\le \frac{1}{r}\,\frac{r^N}{|S_{r_1,r_2}|}\,\left[N\,\omega_N+\delta +\gamma\,\mathrm{cap}_1\Big(\overline{B_1};B_{2}\Big)\right].
\]
We remove again the useless parameter $\delta$, by taking the limit as this goes to $0$. We then obtain 
\[
\lambda_1(\Omega)\,\left(1-\frac{\gamma\,\mathrm{cap}_1(\overline{B_{r}}; B_{2r})}{\mathrm{cap}_1(\overline{B_{r_2}}; B_{2r})}\right)\le \frac{1}{r}\,\frac{r^N}{|S_{r_1,r_2}|}\,\left[N\,\omega_N+\gamma\,\mathrm{cap}_1\Big(\overline{B_1};B_{2}\Big)\right]. 
\]
The choice of the parameter $\varepsilon$ and $\ell$ is now simpler: observe in particular that the role of parameter $\ell$ {\it is now immaterial}, thanks to the fact that the left-hand side in the previous estimate {\it only depends on $r_2$, and not on $r_1$}. We can thus take the limit as $\ell$ goes to $1$ (that is, $r_1$ goes to $0$) and obtain
\begin{equation}
\label{quasifatto1}
\lambda_1(\Omega)\,\left(1-\frac{\gamma\,\mathrm{cap}_1(\overline{B_{r}}; B_{2r})}{\mathrm{cap}_1(\overline{B_{r_2}}; B_{2r})}\right)\le \frac{1}{r}\,\frac{r^N}{|B_{r_2}|}\,\left[N\,\omega_N+\gamma\,\mathrm{cap}_1\Big(\overline{B_1};B_{2}\Big)\right].
\end{equation}
Finally, we choose $\varepsilon>0$ in such a way that
\[
1-\frac{\mathrm{cap}_1\left(\overline{B_r};B_{2\,r}\right)}{\mathrm{cap}_1\Big(\overline{B_{r_2}};B_{2\,r}\Big)}\,\gamma\ge \frac{1-\gamma}{2}.
\]
This is the same as
\[
\mathrm{cap}_1(\overline{B_{r_2}};B_{2\,r})\ge \mathrm{cap}_1(\overline{B_{r}};B_{2\,r})\,\frac{2\,\gamma}{1+\gamma}.
\]
By recalling the expression \eqref{cappalla0}, we want
\[
(1-\varepsilon)^{N-1}\ge \frac{2\,\gamma}{1+\gamma}.
\]
Thus, by taking
\[
\varepsilon_0=\min\left\{1-\left(\frac{2\,\gamma}{1+\gamma}\right)^\frac{1}{N-1},\frac{1}{2\,N}\right\},
\]
we get from \eqref{quasifatto1}
\[
\lambda_1(\Omega)\le 
\frac{1}{r}\,\frac{2}{1-\gamma}\,\frac{2}{\omega_N}\,\left[N\,\omega_N+\gamma\,\mathrm{cap}_1\Big(\overline{B_1};B_{2}\Big)\right].
\]
Observe that we also used that 
\[
|B_{r_2}|=\omega_N\,(1-\varepsilon_0)^N\,r^N\ge \omega_N\,(1-N\,\varepsilon_0)\,r^N\ge \frac{\omega_N}{2}\,r^N,
\]
thanks to the choice of $\varepsilon_0$. Thus, we get \eqref{volio}, as desired.
\end{proof}

\begin{remark}[Quality of the constant]
\label{rem:constant}
We discuss the asymptotic behaviour of the constant $C_{N,p,\gamma}$ obtained in the previous result, as $\gamma$ goes to $1$. We distinguish two cases.
\begin{itemize}
	\item {\it Case $p=1$}: this is easier, in this case we have obtained
	 \begin{equation*}
		C_{N, 1, \gamma} = 4\,N\,\frac{1+\gamma}{1-\gamma},
	\end{equation*}
	where we also used the explicit expression of the relative $1-$capacity of $\overline{B_1}$.
	Hence, we have the following asymptotic behaviour
	\[
	0<\lim_{\gamma\nearrow 1} (1-\gamma)\,C_{N,1,\gamma}<+\infty.
	\]
	% \begin{equation*}
	%	C_{N, 1, \gamma} \sim \frac{1}{1-\gamma}, \qquad \text{as}\ \gamma \nearrow 1.
	%\end{equation*}
\item {\it Case $1<p\le N$}: we first observe that as $\gamma$ goes to $1$, from \eqref{epsilon0} and \eqref{epsilon0N} we have
		 \begin{equation*}
			\varepsilon_0 = \left\{\begin{array}{ll}
				(1-\gamma)\,\dfrac{1}{N-p}\,\dfrac{2^\frac{N-p}{p-1}-1}{2\cdot 2^\frac{N-1}{p-1}}+o(1-\gamma), &\text{ if } 1 < p <N,\\ 
				&\\
				(1 - \gamma)\,\dfrac{1}{N-1}\,\dfrac{\log2}{2} + o(1-\gamma), &\text{ if } p=N.
			\end{array}
			\right. 
		\end{equation*}
Thus, by inspecting the proof above, we have 
\begin{equation}
0<\lim_{\gamma\nearrow 1} (1-\gamma)^{2\,p}\,C_{N,p,\gamma}<+\infty.
%C_{N, p, \gamma} \sim \frac{1}{(1-\gamma)^{2\,p}}, \qquad \text{as}\ \gamma \nearrow 1.
	\end{equation}	
\end{itemize}
We see in particular that the constant $C_{N,p,\gamma}$ blows-up as $\gamma$ goes to $1$. While not claiming that the behavior above is optimal, we point out that the upper bound of Theorem \ref{teo:upper_bound} can not be true with a constant which stays finite as $\gamma$ goes to $1$. We refer to Example \ref{exa:poliquin} for a counter-example.
\end{remark}

\section{Extension to Poincar\'e-Sobolev embedding constants}
\label{sec:6}

In this section, we briefly discuss how the Main Theorem can be extended to the more general case of the
 {\it sharp Poincar\'e-Sobolev embedding constants} associated to an open set $\Omega$. More precisely, given $1 \leq p \le N$ and a {\it subcritical} exponent $q \geq 1$, that is 
 \begin{equation} 
 \label{sottocritico}
	\begin{cases}
		q < p^{*}, \quad &\text{ if } 1 \leq p < N,\\
		q < \infty, \quad &\text{ if } p=N,\\
		%q \leq \infty, \quad &\text{ if } p > N,
	\end{cases}
\end{equation} 
where $p^{*}$ is the Sobolev conjugate exponent of $p$, we introduce the following quantity 
\begin{equation}
	\lambda_{p,q}(\Omega) = \inf_{\varphi\in C^\infty_0(\Omega)} \left\{\int_\Omega |\nabla u|^p\,dx\, :\, \|u\|^p_{L^q(\Omega)}=1\right\},
\end{equation}
sometimes referred to as {\it generalized principal frequency of the Dirichlet $p$-Laplacian}.
\par
With some minor modifications of the proofs of Theorems \ref{teo:lower_bound} and \ref{teo:upper_bound}, we have a two-sided estimate in terms of the capacitary inradius $R_{p,\gamma}(\Omega)$, for these quantities as well. We point out that for the lower bound {\it the additional restriction $q\ge p$ is mandatory} (see Remark \ref{oss:subbuteo} below).
\begin{theorem}
	Let  $1 \leq p \le N$ and let $q\ge 1$ satisfy \eqref{sottocritico}. Let  $0 < \gamma < 1$ and let $\Omega \subseteq \mathbb{R}^N$ be an open set. Then, we have 
	\begin{equation} 
	\label{lambda_pq_upper}
		\lambda_{p,q}(\Omega) \leq C_{N, \gamma, p, q} \left(\frac{1}{R_{p,\gamma}(\Omega)}\right)^{p-N+N\frac{p}{q}},
	\end{equation}
where it is intended that $\lambda_{p,q}(\Omega)=0$, whenever $R_{p,\gamma}(\Omega)=+\infty$.
\par	
	Furthermore, if $q \geq p$ we also have 
	\begin{equation} 
	\label{lambda_pq_lower}
		\gamma\,\sigma_{N, p, q} \left(\frac{1}{R_{p,\gamma}(\Omega)}\right)^{p-N+N\frac{p}{q}} \leq \lambda_{p,q}(\Omega).
	\end{equation} 
\end{theorem}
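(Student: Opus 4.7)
A quick scaling check confirms that the correct power of $1/R_{p,\gamma}(\Omega)$ for $\lambda_{p,q}$ is exactly $p-N+N\,p/q$: under dilations $\Omega\mapsto t\,\Omega$, both $\lambda_{p,q}$ and $R_{p,\gamma}(\Omega)^{-(p-N+N\,p/q)}$ transform with the same weight. The plan is then to adapt the arguments of Theorems \ref{teo:upper_bound} and \ref{teo:lower_bound}, by tracking how the new $L^q$ norm in the denominator of the Rayleigh quotient replaces the $L^p$ norm.

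\textbf{Upper bound.} I would reuse verbatim the family of test functions $\psi=(1-\varphi_\delta)\,\eta$ constructed in Theorem \ref{teo:upper_bound}, so that the numerator $\|\nabla\psi\|_{L^p(\Omega)}^p$ obeys the very same bound of order $r^{N-p}$ times an explicit function of $\varepsilon$, $\gamma$ and $\delta$. For the denominator, H\"older's inequality on the spherical shell $S_{r_1,r_2}$ yields
\[
\|\psi\|_{L^q(\Omega)}^p \ge |S_{r_1,r_2}|^\frac{p}{q}\,\left(\fint_{S_{r_1,r_2}} (1-\varphi_\delta)\,dx\right)^p,
\]
and the average in parentheses is bounded away from zero exactly as in the proof of Theorem \ref{teo:upper_bound}, thanks to Proposition \ref{coro:sharp1p} (or Corollary \ref{coro:sharp11} when $p=1$), upon a suitable choice of $\varepsilon=\varepsilon(N,p,\gamma)$. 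Collecting the powers of $r$ (namely $r^{N-p}$ in the numerator and $r^{N\,p/q}$ in the denominator) gives $\lambda_{p,q}(\Omega)\le C_{N,p,q,\gamma}\,r^{-(p-N+N\,p/q)}$, and the supremum over admissible $r>R_{p,\gamma}(\Omega)$ yields \eqref{lambda_pq_upper}, including the convention $\lambda_{p,q}(\Omega)=0$ when $R_{p,\gamma}(\Omega)=+\infty$.

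\textbf{Lower bound (assuming $q\ge p$).} For $r>R_{p,\gamma}(\Omega)$, I would tile $\mathbb{R}^N$ with disjoint cubes $Q_r(x_i)$ and apply on each cube a Maz'ya--Sobolev inequality for Lipschitz functions vanishing on $\overline{B_r(x_i)}\setminus\Omega$, of the form
\[
\mathrm{cap}_p\big(\overline{B_r(x_i)}\setminus\Omega;\,B_{2\sqrt{N}\,r}(x_i)\big)\,\|u\|_{L^q(Q_r(x_i))}^{p}\le \mathscr{C}_{N,p,q}\,r^{\frac{N\,p}{q}}\,\|\nabla u\|_{L^p(Q_r(x_i))}^{p},
\]
available as a $q\ne p$ variant of \cite[Theorem 2.5]{BozBra}, or as a particular case of \cite[Theorem 14.1.2]{Maz}; its $r$-scaling is forced by homogeneity, and the subcriticality condition \eqref{sottocritico} is precisely what ensures that such an inequality holds. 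Combining with \eqref{comparison_cap} to pass from $B_{2\sqrt{N}\,r}(x_i)$ to $B_{2r}(x_i)$, and then with the capacitary inradius assumption exactly as in Theorem \ref{teo:lower_bound}, one obtains on each cube
\[
C\,\gamma\,r^{-(p-N+\frac{N\,p}{q})}\,\|u\|_{L^q(Q_r(x_i))}^p\le \|\nabla u\|_{L^p(Q_r(x_i))}^p.
\]
Setting $a_i:=\|u\|_{L^q(Q_r(x_i))}^q$ and using the subadditivity of $t\mapsto t^{p/q}$, valid since $0<p/q\le 1$, gives
\[
\sum_i \|u\|_{L^q(Q_r(x_i))}^p=\sum_i a_i^{p/q}\ge \Big(\sum_i a_i\Big)^{p/q}=\|u\|_{L^q(\Omega)}^p,
\]
so that summing the cubewise estimate over $i$ delivers $\lambda_{p,q}(\Omega)\ge C\,\gamma\,r^{-(p-N+N\,p/q)}$; letting $r\searrow R_{p,\gamma}(\Omega)$ concludes the proof of \eqref{lambda_pq_lower}.

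\textbf{Main obstacle.} The delicate point is to have the Maz'ya--Sobolev inequality at hand with the correct $r$-scaling $r^{Np/q}$: this is where the subcriticality assumption \eqref{sottocritico} plays its role. The restriction $q\ge p$ in \eqref{lambda_pq_lower} is not a technical blemish but enters essentially in the summation step, since for $p>q$ the subadditivity of $t\mapsto t^{p/q}$ reverses and prevents passing from the cubewise to a global estimate; as the statement already suggests and as the Appendix will show, this limitation is genuinely sharp.
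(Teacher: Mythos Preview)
Your proposal is correct and follows essentially the same route as the paper: the upper bound reuses the test function $(1-\varphi_\delta)\,\eta$ and bounds the $L^q$ average below via Jensen/H\"older exactly as the paper does, while the lower bound invokes the $q$-version of the Maz'ya--Poincar\'e inequality from \cite[Theorem 2.5]{BozBra}, tiles with cubes, and exploits the subadditivity of $t\mapsto t^{p/q}$ for $q\ge p$. One small slip: in the upper bound you should take the supremum over admissible radii $r$ (those for which some $\overline{B_r(x_0)}\setminus\Omega$ is $(p,\gamma)$-negligible), not over $r>R_{p,\gamma}(\Omega)$; by definition the supremum of such $r$ is precisely $R_{p,\gamma}(\Omega)$.
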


\begin{proof}
We prove \eqref{lambda_pq_upper} and \eqref{lambda_pq_lower} separately.
	 \vskip.2cm\noindent
	{\bf Upper bound.} The proof goes along the same lines as before. In particular, by using the same notation as in the proof of Theorem \ref{teo:upper_bound}, we now get 
	\[
		\lambda_{p,q}(\Omega) \left(\fint_{S_{r_1, r_2}} (1 - \varphi_\delta)^q\,dx\right)^{\frac{p}{q}} \leq \frac{2^{p-1}}{r^{p-N+N\frac{p}{q}}}\,\left(\frac{r^N}{|S_{r_1, r_2}|}\right)^{\frac{p}{q}}\left[\frac{N \omega_N}{\varepsilon^{p-1}} + \delta + \gamma\, \mathrm{cap}_p(\overline{B}_r; B_{2r}) \right],
	\]
	in place of \eqref{passo1}.
	We use Jensen's inequality\footnote{For $q=1$ this is not needed.} to estimate from below the leftmost term. This gives 
	\begin{equation} \label{passo2_lambda_pq}
		\lambda_{p,q}(\Omega) \left(1 - \fint_{S_{r_1, r_2}} \varphi_\delta\, dx\right)^p \leq \frac{2^{p-1}}{r^{p-N+N\frac{p}{q}}}\,\left(\frac{r^N}{|S_{r_1, r_2}|}\right)^{\frac{p}{q}} \left[\frac{N \omega_N}{\varepsilon^{p-1}} + \delta + \gamma\, \mathrm{cap}_p(\overline{B}_r; B_{2r}) \right],
	\end{equation}
	in place of \eqref{dai!}. We distinguish again the case $1<p\le N$ and the case $p=1$.
	\vskip.2cm\noindent
	{\it A. Case $1< p \le N $.}  As done before, by applying \eqref{torsiontype3} with $R = 2r$ and then taking the limit as $\delta$ goes to $0$, we obtain 
	\begin{equation*}
		\lambda_{p,q}(\Omega) \left(1 - \gamma^\frac{1}{p}\,\left( \dfrac{\mathrm{cap}_p(\overline{B_r}; B_{2r})}{\mathrm{cap}_p(\overline{B_{r_1}}; B_{2r})}\right)^\frac{1}{p}\right)^p \leq \frac{2^{p-1}}{r^{p-N+N\frac{p}{q}}}\,\left(\frac{r^N}{|S_{r_1, r_2}|}\right)^{\frac{p}{q}} \left[\frac{N \omega_N}{\varepsilon^{p-1}} + \gamma\, \mathrm{cap}_p(\overline{B_1}; B_{2}) \right].
	\end{equation*}
Observe that this is the same as \eqref{quasifatto}, except for the presence of the correct scaling power on $r$ and the power $p/q$ on the term $r^N/|S_{r_1, r_2}|$, in the right-hand side. Then one concludes as in the case $p=q$ previously treated.	
	\vskip.2cm \noindent
	{\it B. Case $p=1$.} In \eqref{passo2_lambda_pq}, we use this time \eqref{torsiontype21}. By taking the limit as $\delta$ goes to $0$ again, we infer that 
\[
\lambda_{1,q}(\Omega)\,\left(1-\frac{\gamma\,\mathrm{cap}_1(\overline{B_{r}}; B_{2r})}{\mathrm{cap}_1(\overline{B_{r_2}}; B_{2r})}\right)\le \frac{1}{r^{1-N+\frac{N}{q}}}\,\left(\frac{r^N}{|S_{r_1,r_2}|}\right)^\frac{1}{q}\,\left[N\,\omega_N+\gamma\,\mathrm{cap}_1\Big(\overline{B_1};B_{2}\Big)\right]. 
\]
We can take again the limit as $r_1$ goes to $0$ and obtain
\[
\lambda_{1,q}(\Omega)\,\left(1-\frac{\gamma\,\mathrm{cap}_1(\overline{B_{r}}; B_{2r})}{\mathrm{cap}_1(\overline{B_{r_2}}; B_{2r})}\right)\le  \frac{1}{r^{1-N+\frac{N}{q}}}\,\left(\frac{r^N}{|S_{r_1,r_2}|}\right)^\frac{1}{q}\,\left[N\,\omega_N+\gamma\,\mathrm{cap}_1\Big(\overline{B_1};B_{2}\Big)\right],
\]
in place of \eqref{quasifatto1}. The conclusion then follows as in the case $q=p=1$.
	\vskip.2cm\noindent
	{\bf Lower bound.}
	We can assume $R_{p,\gamma}(\Omega) < +\infty$, otherwise there is nothing to prove.	
	 Let $r > R_{p,\gamma}(\Omega)$ and let $u \in C^\infty_0(\Omega)$. As in the proof of Theorem \ref{teo:lower_bound}, for $p<q$ satisfying \eqref{sottocritico}, we can still apply the Maz'ya-Poincar\'e inequality \cite[Theorem 2.5]{BozBra} and get this time 
	\begin{equation*}
		\frac{\mathscr{C}}{r^{\frac{N}{q}}}\, \mathrm{cap}_p\left(\overline{B_r(x_0)} \setminus \Omega; B_{2\sqrt{N}r}(x_0)\right)^{\frac{1}{p}}\, \|u\|_{L^q(Q_{r}(x_0))} \leq \|\nabla u\|_{L^p(Q_{r}(x_0))},
	\end{equation*}
	where $\mathscr{C}$ is the same constant as in \cite[Theorem 2.5]{BozBra}. Observe that now it depends on $q$, as well. The relative $p-$capacity on the left-hand side can be estimated from below as in the proof of Theorem \ref{teo:lower_bound}, so to get
	\[ 
		\frac{\mathscr{C}^p\,\mathrm{cap}_p(\overline{B_1}; B_2)}{\dfrac{2\,\sqrt{N}}{\lambda_p(B_1)^{\frac{1}{p}}}+1}\, \left(\frac{1}{r}\right)^{p-N+p\, \frac{N}{q}}\,\gamma\, \|u\|^p_{L^q(Q_{r}(x_0))} \leq \|\nabla u\|^p_{L^p(Q_{r}(x_0))},
	\]
in place of \eqref{diomadonna}. In order to conclude, we want to use again a tiling of $\mathbb{R}^N$, made of a countable family of disjoint cubes with radius $r$. A slight difference now arises, which explains the restriction on $q$: indeed,  
if $\{\mathcal{Q}_{\alpha}\}_{\alpha \in \mathbb{N}}$ is such a family of cubes, we have this time
\[
\sum_{\alpha\in\mathbb{N}}\|\nabla u\|^p_{L^p(\mathcal{Q}_{\alpha})}=\|\nabla u\|_{L^p(\Omega)}^p\qquad \text{but}\qquad \sum_{\alpha\in\mathbb{N}} \|u\|^p_{L^q(\mathcal{Q}_\alpha)}\not= \|u\|^p_{L^q(\Omega)}.
\]
However, the choice $q>p$ entails that the function $\delta \mapsto \delta^{p/q}$ is subadditive. Thus, in particular 
	\[
	 \sum_{\alpha\in\mathbb{N}} \|u\|^p_{L^q(\mathcal{Q}_\alpha)} \geq \left(\sum_{\alpha\in\mathbb{N}} \|u\|^q_{L^q(\mathcal{Q}_\alpha)}\right)^\frac{p}{q} = \|u\|^p_{L^q(\Omega)}.
	\]
	We can now get the desired conclusion, as in the case $q=p$.
\end{proof}

\begin{remark}
\label{oss:subbuteo}
The previous proof for the lower bound {\it does not} work if $1 \leq q < p$. This is not by chance:
in Example \ref{exa:app_lower_bound_fail} we construct a counter-example to the validity of the lower bound, in this case.
\end{remark}

\section{A comment on the case $p>N$}
\label{sec:7}
In the previous sections we excluded the range $p>N$, since in this case we already know that $\lambda_p(\Omega)$ admits a two-sided estimate in terms of the usual inradius $r_\Omega$, thanks to \eqref{1intro} and \eqref{2intro}. Actually, this remains true more generally for $\lambda_{p,q}(\Omega)$ and $p\le q\le \infty$, see \cite[Corollary 5.9]{BraPriZag2} (and also \cite[Corollary 4.7]{BozBra}).
\par
In this section, for completeness we compare the capacitary inradius and the usual notion of inradius when $p>N$. We will show that they coincide, at least for $\gamma$ smaller than a certain (optimal) threshold. This result is certainly not surprising, but it requires some work and some precise estimates on the capacity of points. 
\par
At this aim, we start by pointing out that for a compact set $\Sigma\Subset B_R$, the definition of relative $p-$capacity can be also written as
\[
\mathrm{cap}_p(\Sigma;B_R)=\inf_{\varphi\in W^{1,p}_0(B_R)}\left\{\int_{B_R} |\nabla \varphi|^p\,dx\, :\, \varphi \ge 1 \ \text{on}\ \Sigma\right\},
\]
for $p>N$. 
Observe that the pointwise requirement on the test functions make sense, in light of Morrey's inequality, i.e. $W^{1,p}_0(B_R)$ is embedded in a space of continuous functions on $\overline{B_R}$. Moreover, by a standard application of the Direct Method, the previous infimum is actually (uniquely) attained, by a function $u_\Sigma$ called $p-$capacitary potential. By minimality and uniqueness, it is not difficult to see that this is a $p-$harmonic function in $B_R\setminus \Sigma$, such that 
\[
0\le u_\Sigma\le 1\qquad \text{and}\qquad u_\Sigma=1 \text{ on } \Sigma.
\]
\begin{lemma}
\label{lm:points}
Let $p>N\ge 2$ and let $R>0$. We choose a set of distinct points $\{x_1,\dots,x_k\}\in B_R$ and set
\[
D:=\min\Big\{|x_i-x_j|,\, \mathrm{dist}(x_i;\partial B_R)\, :\, i,j\in\{1,\dots,k\},\, i\not=j\Big\}>0.
\]
There exists a constant $c_p>0$, depending on $p$ only, such that that for every $\delta<D$ we have
\[
\mathrm{cap}_p\big(\{x_1,\dots,x_{k-1}\};B_R\big)+c_p\,\int_{B_\delta(x_k)} |\nabla u-\nabla H_u|^p\,dx\le \mathrm{cap}_p\big(\{x_1,\dots,x_{k}\};B_R\big).
\]
Here $u$ is the $p-$capacitary potential of the set $\{x_1,\dots,x_k\}$ relative to $B_R$, while $H_u$ is the $p-$harmonic function in $B_\delta(x_k)$ such that $u-H_u\in W^{1,p}_0(B_\delta(x_k))$. In particular, we have
\[
\mathrm{cap}_p\big(\{x_1,\dots,x_{k-1}\};B_R\big)< \mathrm{cap}_p\big(\{x_1,\dots,x_{k}\};B_R\big).
\]
\end{lemma}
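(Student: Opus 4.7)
My plan is to build a competitor for $\mathrm{cap}_p(\{x_1,\dots,x_{k-1}\};B_R)$ starting from the $p$-capacitary potential $u$ of $\{x_1,\dots,x_k\}$, by surgically replacing $u$ inside $B_\delta(x_k)$ with its $p$-harmonic replacement $H_u$. Set
\[
v := \begin{cases} H_u & \text{in } B_\delta(x_k), \\ u & \text{in } B_R\setminus B_\delta(x_k). \end{cases}
\]
Since $u-H_u\in W^{1,p}_0(B_\delta(x_k))$ extends by zero to $B_R$, we have $v\in W^{1,p}_0(B_R)$. The condition $\delta<D$ ensures $x_1,\dots,x_{k-1}\notin\overline{B_\delta(x_k)}$, so that $v(x_i)=u(x_i)=1$ for $i<k$; pointwise evaluation is meaningful since $p>N$ puts $v$ in $C(\overline{B_R})$ by Morrey. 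Hence $v$ is admissible for the $(k-1)$-point capacity.

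The comparison of energies reads
\[
\mathrm{cap}_p(\{x_1,\dots,x_{k-1}\};B_R)\le \int_{B_R}|\nabla v|^p\,dx=\mathrm{cap}_p(\{x_1,\dots,x_k\};B_R)-\int_{B_\delta(x_k)}\bigl(|\nabla u|^p-|\nabla H_u|^p\bigr)\,dx,
\]
so the lemma reduces to the strong-convexity estimate
\[
\int_{B_\delta(x_k)}\bigl(|\nabla u|^p-|\nabla H_u|^p\bigr)\,dx\ge c_p\int_{B_\delta(x_k)}|\nabla u-\nabla H_u|^p\,dx.
\]
I would derive this from the classical pointwise inequality, valid for $p\ge 2$,
\[
|a|^p-|b|^p\ge p\,\langle |b|^{p-2}b,a-b\rangle+c_p\,|a-b|^p,\qquad a,b\in\mathbb{R}^N,
\]
applied with $a=\nabla u$ and $b=\nabla H_u$. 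After integration on $B_\delta(x_k)$, the cross term $p\int\langle|\nabla H_u|^{p-2}\nabla H_u,\nabla u-\nabla H_u\rangle\,dx$ vanishes: it is exactly the weak formulation of $-\Delta_p H_u=0$ tested against $u-H_u\in W^{1,p}_0(B_\delta(x_k))$. Because $p>N\ge 2$, we have $p\ge 2$ automatically, so the convexity inequality applies; only positivity of $c_p$ matters, and this is what I expect to be the main technical ingredient, to be quoted from a standard reference.

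For the strict inequality it suffices to rule out equality in the estimate just established. If $\int_{B_\delta(x_k)}|\nabla u-\nabla H_u|^p\,dx=0$, then $\nabla u=\nabla H_u$ a.e.\ in the connected ball $B_\delta(x_k)$, and since $u-H_u\in W^{1,p}_0(B_\delta(x_k))$, we would conclude $u\equiv H_u$ there. In particular $u$ would be $p$-harmonic on $B_\delta(x_k)$, hence on the connected open set $B_R\setminus\{x_1,\dots,x_{k-1}\}$. As $u$ is continuous on $\overline{B_R}$, satisfies $0\le u\le 1$, and attains the interior maximum $u(x_k)=1$, the strong maximum principle for the $p$-Laplacian would force $u\equiv 1$ on that set, contradicting $u|_{\partial B_R}=0$. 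This gives the strict inequality. The delicate point of the whole argument is really the strong-convexity inequality together with the justification that $u-H_u$ is a legitimate test function in the equation for $H_u$; everything else is bookkeeping with Morrey embedding and the maximum principle.
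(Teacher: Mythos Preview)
Your proof is correct and follows essentially the same approach as the paper: construct a competitor for the $(k-1)$-point capacity by $p$-harmonic replacement of the capacitary potential $u$ on $B_\delta(x_k)$, then compare energies via the pointwise strong-convexity inequality for $p\ge 2$ (with the cross term killed by the Euler--Lagrange equation for $H_u$), and conclude strictness via the strong maximum principle. The only cosmetic difference is that the paper localizes the maximum-principle contradiction to $B_\delta(x_k)$ itself, whereas you pass to the whole punctured ball $B_R\setminus\{x_1,\dots,x_{k-1}\}$; both versions yield the same conclusion.
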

\begin{proof}
We take $u\in W^{1,p}_0(B_R)$ to be an optimal function for $\mathrm{cap}_p\big(\{x_1,\dots,x_{k}\};B_R\big)$. This means that $0\le u\le 1$ and
\[
\int_{B_R} |\nabla u|^p\,dx=\mathrm{cap}_p\big(\{x_1,\dots,x_{k}\};B_R\big),\qquad u(x_i)=1,\ \text{for}\ i=1,\dots,k. 
\]
Observe that by minimality, the function $u$ is weakly $p-$harmonic in the open connected set $B_R\setminus\{x_1,\dots,x_k\}$. Thus, by the minimum and maximum principles, we get that
\[
0<u(x)<1\qquad \text{ in }B_R\setminus\{x_1,\dots,x_k\}.
\]
We will use a ``$p-$harmonic replacement trick'' in order to modify $u$ and produce a trial function, which is admissible for the $p-$capacity of $\{x_1,\dots,x_{k-1}\}$. Namely, we introduce the new function
\[
U(x)=\left\{\begin{array}{rl}
u(x), & \text{if}\ x\in B_R\setminus B_\delta(x_k),\\
H_u(x),& \text{if}\ x\in B_\delta(x_k),
\end{array}
\right.
\]
where $H_u\in W^{1,p}(B_\delta(x_k))$ is the unique minimizer of 
\[
\min_{\varphi\in W^{1,p}(B_\delta(x_k))} \left\{\int_{B_\delta(x_k)}|\nabla \varphi|^p\,dx\, :\, \varphi-u\in W^{1,p}_0(B_\delta(x_k))\right\}.
\]
Observe that by minimality, the function $H_u$ satisfies
\[
\int_{B_\delta(x_k)} \langle |\nabla H_u|^{p-2}\,\nabla H_u,\nabla \varphi\rangle\,dx=0,\qquad \text{for every}\ \varphi\in W^{1,p}_0(B_\delta(x_k)).
\]
Thus, in particular, we have 
\begin{equation}
\label{H1}
\int_{B_\delta(x_k)} \langle |\nabla H_u|^{p-2}\,\nabla H_u,\nabla u-\nabla H_u\rangle\,dx=0.
\end{equation}
It is not difficult to see that the function $U$ is admissible for the $p-$capacity of $\{x_1,\dots,x_{k-1}\}$. This gives
\[
\begin{split}
\mathrm{cap}_p\big(\{x_1,\dots,x_{k-1}\};B_R\big)&\le \int_{B_R} |\nabla U|^p\,dx\\
&=\int_{B_R\setminus B_\delta(x_k)} |\nabla u|^p\,dx+\int_{B_\delta(x_k)} |\nabla H_u|^p\,dx\\
&=\int_{B_R} |\nabla u|^p\,dx+\left(\int_{B_\delta(x_k)} |\nabla H_u|^p\,dx-\int_{B_\delta(x_k)} |\nabla u|^p\,dx\right)\\
&=\mathrm{cap}_p\big(\{x_1,\dots,x_{k}\};B_R\big)+\left(\int_{B_\delta(x_k)} |\nabla H_u|^p\,dx-\int_{B_\delta(x_k)} |\nabla u|^p\,dx\right).
\end{split}
\]
In order to conclude, we just need to estimate the rightmost term into parentheses. To this aim, we need to recall the following convexity inequality, which is valid for $p>2$ (see \cite[Lemma 4.2, equation (4.3)]{Lin}):
%\begin{equation}
%\label{peter1}
\[
|z|^p\ge |w|^p+p\,\langle |w|^{p-2}\,w,z-w\rangle+c_p\,|z-w|^p,\qquad \text{for every}\ z,w\in\mathbb{R}^N.
\]
%\end{equation}
%and
%\begin{equation}
%\label{peter2}
%\langle |z|^{p-2}\,z-|w|^{p-2}\,w,z-w\rangle\ge \widetilde{c}_p\,|z-w|^p,\qquad \text{for every}\ z,w\in\mathbb{R}^N.
%\end{equation}
From this inequality, we get
\[
\begin{split}
\int_{B_\delta(x_k)}|\nabla u|^p\,dx&\ge \int_{B_\delta(x_k)}|\nabla H_u|^p\,dx+\int_{B_\delta(x_k)} \langle |\nabla H_u|^{p-2}\,\nabla H_u,\nabla u-\nabla H_u\rangle\,dx\\
&+c_p\,\int_{B_\delta(x_k)} |\nabla u-\nabla H_u|^p\,dx\\
&=\int_{B_\delta(x_k)}|\nabla H_u|^p\,dx+c_p\,\int_{B_\delta(x_k)} |\nabla u-\nabla H_u|^p\,dx.
\end{split}
\]
In the last identity, we used \eqref{H1}. This implies that we have 
\[
\mathrm{cap}_p\big(\{x_1,\dots,x_{k-1}\};B_R\big)\le \mathrm{cap}_p\big(\{x_1,\dots,x_{k}\};B_R\big)-c_p\,\int_{B_\delta(x_k)} |\nabla u-\nabla H_u|^p\,dx.
\]
Finally, we observe that the last quantity can not vanish, otherwise $u$ would be weakly $p-$harmonic on $B_\delta(x_k)$ and would attain its maximum at the center of the ball, thus violating the maximum principle.
\end{proof}
\begin{lemma}
\label{lm:punto}
Let $p>N$, for every $x_0\in B_R(y_0)$, we have
\[
\mathrm{cap}_p\left(\{x_0\};B_{R}(y_0)\right)\ge \mathrm{cap}_p\left(\{y_0\};B_{R}(y_0)\right).
\]
\end{lemma}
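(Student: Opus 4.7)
The plan is to reduce the general case to the concentric one via symmetric decreasing rearrangement around $x_0$. Translation invariance and the explicit formula \eqref{cappunto} give
\[
\mathrm{cap}_p(\{y_0\};B_R(y_0))=\mathrm{cap}_p(\{x_0\};B_R(x_0)),
\]
so it suffices to prove $\mathrm{cap}_p(\{x_0\};B_R(y_0))\ge \mathrm{cap}_p(\{x_0\};B_R(x_0))$.

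To do this, I would take the $p$-capacitary potential $u\in W^{1,p}_0(B_R(y_0))$ of $\{x_0\}$ relative to $B_R(y_0)$, whose existence, uniqueness, and continuity (by Morrey's embedding, since $p>N$) are recalled in the preamble to Section \ref{sec:7}. So $0\le u\le 1$, $u(x_0)=1$, and $\int_{B_R(y_0)}|\nabla u|^p\,dx=\mathrm{cap}_p(\{x_0\};B_R(y_0))$. Extending $u$ by zero outside $B_R(y_0)$, I would then pass to the Schwarz symmetrization $u^\star$ centered at $x_0$. Three standard facts will be used: (i) the support of $u^\star$ is the closed ball $\overline{B_{R^\star}(x_0)}$ with $\omega_N R^{\star N}=|\{u>0\}|\le |B_R(y_0)|=\omega_N R^N$, hence $R^\star\le R$; (ii) the P\'olya-Szeg\"o inequality gives $\int|\nabla u^\star|^p\,dx\le \int|\nabla u|^p\,dx$; (iii) by the layer-cake definition of $u^\star$ together with the continuity of $u$, one has $u^\star(x_0)=\mathrm{ess\,sup}\,u=1$.

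Since $R^\star\le R$, $u^\star$ belongs to $W^{1,p}_0(B_R(x_0))$ and satisfies $u^\star(x_0)=1$. It is therefore an admissible competitor in the $W^{1,p}_0$-characterization of $\mathrm{cap}_p(\{x_0\};B_R(x_0))$ (valid for $p>N$, as explicitly recalled in the excerpt). Chaining the inequalities then yields
\[
\mathrm{cap}_p(\{x_0\};B_R(x_0))\le \int|\nabla u^\star|^p\,dx\le \int|\nabla u|^p\,dx=\mathrm{cap}_p(\{x_0\};B_R(y_0)),
\]
which concludes the argument.

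The main subtle point will be justifying $u^\star(x_0)=1$ in a genuinely pointwise (not just a.e.) sense, so that $u^\star$ is a legitimate admissible function for the capacity of $\{x_0\}$. This relies on the continuity of $u$ granted by Morrey and the choice of the upper semicontinuous representative of $u^\star$: concretely, for every $t<1$ the set $\{u>t\}$ has positive measure, so its spherical symmetrization is a ball of positive radius centered at $x_0$ and thus contains $x_0$, which forces $u^\star(x_0)\ge 1$ by layer-cake. Everything else is a routine application of symmetrization in $W^{1,p}$ together with the explicit formula \eqref{cappunto} and the fact that $R\mapsto R^{N-p}$ is decreasing for $p>N$.
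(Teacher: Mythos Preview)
Your argument is correct. The paper, however, takes a shorter path: it invokes directly the isocapacitary inequality \cite[(2.2.10)]{Maz}, which for a compact set $F\subseteq B_R$ gives
\[
\mathrm{cap}_p(F;B_R)\ge (N\,\omega_N)^{p/N}\,N^{(N-p)/N}\,\left(\frac{p-N}{p-1}\right)^{p-1}\,\left||B_R|^{\frac{p-N}{N(p-1)}}-|F|^{\frac{p-N}{N(p-1)}}\right|^{1-p},
\]
and then observes that with $F=\{x_0\}$ the right-hand side equals $\mathrm{cap}_p(\{0\};B_R)$ by \eqref{cappunto}. In substance the two proofs are the same idea---Maz'ya's inequality is itself a consequence of symmetrization---but you unpack the mechanism explicitly via P\'olya--Szeg\H{o} applied to the capacitary potential, whereas the paper cites the finished product. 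Your version is more self-contained and makes transparent why the center minimizes; the paper's version is a one-line appeal to a standard reference. Your care with the pointwise condition $u^\star(x_0)=1$ (using Morrey's embedding and the layer-cake representation) is well placed and correctly handled.
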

\begin{proof}
We can suppose that $y_0$ coincides with the origin.
It is sufficient to use \cite[(2.2.10)]{Maz} with $F=\{x_0\}$. This gives
\[
\mathrm{cap}_p\left(\{x_0\};B_{R}\right)\ge (N\,\omega_N)^\frac{p}{N}\,N^\frac{N-p}{N}\,\left(\frac{p-N}{p-1}\right)^{p-1}\,|B_R|^\frac{N-p}{N}.
\]
By recalling \eqref{cappunto}, we easily see that the right-hand side coincides with the capacity of the center of the ball. 
\end{proof}
In the next result we compare the usual inradius with the capacitary one, in the superconformal case $p>N$. 
We will show that for $\gamma$ smaller than a universal sharp constant, they actually coincide.
\begin{proposition}
Let $p>N$ and 
\begin{equation}
\label{gamma0}
\gamma_0:=\frac{\mathrm{cap}_p\left(\{0\};B_{2}\right)}{\mathrm{cap}_p\left(\overline{B_1};B_{2}\right)}=\frac{1}{2^{p-N}}\,\left(2^\frac{p-N}{p-1}-1\right)^{p-1}.
\end{equation}
For every open set $\Omega\subseteq\mathbb{R}^N$ we have 
\[
R_{p,\gamma}(\Omega)=r_\Omega,\qquad \text{for every}\ 0\le \gamma<\gamma_0.
\]
Moreover, for the punctured ball $\dot B_R:=B_R\setminus\{0\}$ we have 
\[
R_{p,\gamma}(\dot B_R)>r_{\dot B_R},\qquad \text{for every}\ 1>\gamma\ge \gamma_0.
\]
\end{proposition}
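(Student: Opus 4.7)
My plan is to establish both claims by reducing them to the single-point case, where Lemma \ref{lm:punto} provides the sharp lower bound on the relative $p$-capacity when $p>N$. The inequality $R_{p,\gamma}(\Omega) \geq r_\Omega$ is immediate from the definitions, so for the first assertion it suffices to prove the reverse inequality for every $0 \leq \gamma < \gamma_0$.

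I would argue by contradiction: assume $R_{p,\gamma}(\Omega) > r_\Omega$ and pick $r > r_\Omega$ together with a center $x_0 \in \mathbb{R}^N$ such that $\overline{B_r(x_0)} \setminus \Omega$ is $(p,\gamma)$-negligible. Since $r > r_\Omega$, no ball of radius $r$ fits inside $\Omega$, so the compact set $\overline{B_r(x_0)} \setminus \Omega$ is non-empty; pick any $y_0$ in it. Since $y_0 \in \overline{B_r(x_0)} \subset B_{2r}(x_0)$, I would combine the monotonicity of the relative $p$-capacity with respect to the compact set and Lemma \ref{lm:punto} applied in the outer ball $B_{2r}(x_0)$ (with $x_0$ playing the role of its center) to obtain
\[
\mathrm{cap}_p(\{x_0\}; B_{2r}(x_0)) \leq \mathrm{cap}_p(\{y_0\}; B_{2r}(x_0)) \leq \mathrm{cap}_p(\overline{B_r(x_0)} \setminus \Omega; B_{2r}(x_0)) \leq \gamma\, \mathrm{cap}_p(\overline{B_r(x_0)}; B_{2r}(x_0)).
\]
Both the leftmost and the rightmost capacities scale as $r^{N-p}$ by \eqref{cappunto} and Remark \ref{oss:scaling}, so the ratio is independent of $r$ and equals precisely $\gamma_0$; hence $\gamma \geq \gamma_0$, a contradiction. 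The explicit formula for $\gamma_0$ then follows from plugging \eqref{cappunto} (with $R=2$) and \eqref{cappalla} (with $r=1$, $R=2$) into the ratio, noting that $(1/2)^{(N-p)/(p-1)} = 2^{(p-N)/(p-1)} > 1$ when $p>N$, which simplifies the denominator to $(2^{(p-N)/(p-1)} - 1)^{p-1}$.

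For the second claim, I observe that $r_{\dot B_R} = R/2$, since the largest ball contained in $B_R$ and avoiding the origin has radius $R/2$. For any $R/2 < r < R$, taking $x_0 = 0$ gives $\overline{B_r(0)} \subset B_R$, and therefore $\overline{B_r(0)} \setminus \dot B_R = \{0\}$. By the same ratio computation as above, the $(p,\gamma)$-negligibility of $\{0\}$ relative to $B_{2r}(0)$ is equivalent to $\gamma \geq \gamma_0$. Hence every such $r$ is an admissible competitor in the definition of $R_{p,\gamma}(\dot B_R)$, and letting $r \nearrow R$ yields $R_{p,\gamma}(\dot B_R) \geq R > R/2 = r_{\dot B_R}$. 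The only delicate point in the whole argument is making sure Lemma \ref{lm:punto} is applied with $B_{2r}(x_0)$ as the outer ball and with $x_0$ (not $y_0$) as its center; once this is set up correctly, both parts collapse to the same elementary scaling identity that defines $\gamma_0$.
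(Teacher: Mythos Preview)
Your argument is correct and follows essentially the same route as the paper: both parts reduce to the scaling identity $\mathrm{cap}_p(\{x_0\};B_{2r}(x_0))=\gamma_0\,\mathrm{cap}_p(\overline{B_r(x_0)};B_{2r}(x_0))$ together with Lemma~\ref{lm:punto} and monotonicity of capacity. The only differences are cosmetic (you frame the first part by contradiction and swap the roles of the symbols $x_0,y_0$), and the paper additionally pins down the exact value $R_{p,\gamma_0}(\dot B_R)=R$ via Lemma~\ref{lm:points}, which is not needed for the statement as written.
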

\begin{proof}
We prove the two facts separately.
\vskip.2cm\noindent
We have already observed that 
\[
r_\Omega\le R_{p,\gamma}(\Omega).
\]
In particular, if $r_\Omega=+\infty$, then the conclusion trivially follows. Let us suppose that $r_\Omega<+\infty$. For every ball $B_r(y_0)$ with $r>r_\Omega$, we then must have 
\[
\overline{B_r(y_0)}\setminus \Omega\not=\emptyset.
\] 
In particular, there exists a point $x_0\in \overline{B_r(y_0)}\setminus \Omega$. By monotonicity of the $p-$capacity with respect to the set inclusion, we get
\[
\mathrm{cap}_p\left(\overline{B_r(y_0)}\setminus \Omega;B_{2\,r}(y_0)\right)\ge \mathrm{cap}_p\left(\{x_0\};B_{2\,r}(y_0)\right)\ge \mathrm{cap}_p\left(\{y_0\};B_{2\,r}(y_0)\right).
\]
In the second inequality, we used Lemma \ref{lm:punto}. In particular, by recalling the definition of $\gamma_0$, we get
\[
\mathrm{cap}_p\left(\overline{B_r(y_0)}\setminus \Omega;B_{2\,r}(y_0)\right)\ge \gamma_0\,r^{N-p}\,\mathrm{cap}_p\left(\overline{B_1};B_{2}\right)=\gamma_0\,\mathrm{cap}_p\left(\overline{B_r(y_0)};B_{2\,r}(y_0)\right).
\]
This implies that if $\gamma<\gamma_0$, then $\overline{B_r(y_0)}\setminus \Omega$ is not $(p,\gamma)-$negligible, for $r>r_\Omega$. This gives the desired conclusion, in light of the definition of $R_{p,\gamma}(\Omega)$.
\vskip.2cm\noindent
We now show the optimality of the previous result.
We consider the punctured ball $\dot B_R=B_R\setminus\{0\}$. We clearly have $r_{\dot B_R}=R/2$. On the other hand, it is not difficult to show that 
\[
R_{p,\gamma}(\dot B_R)\ge R,\qquad \text{for every}\ \gamma\ge\gamma_0,
\]
where $\gamma_0$ is still defined by \eqref{gamma0}. Indeed, we may notice that if $r<R$ we have
\[
\mathrm{cap}_p(\overline{B_r}\setminus \dot B_R;B_{2r})=\mathrm{cap}_p(\{0\};B_{2r})=r^{N-p}\,\gamma_0\,\mathrm{cap}_p(\overline{B_r};B_{2r}),
\]
which shows that $\overline{B_r}\setminus \dot B_R$ is $(p,\gamma_0)-$negligible. Thus, this already shows that for $\gamma\ge \gamma_0$
\[
R_{p,\gamma}(\dot B_R)\ge R_{p,\gamma_0}(\dot B_R)\ge r,\qquad \text{for every}\ r<R.
\]
Actually, we can show that $R_{p,\gamma_0}(\dot B_R)=R$. It is sufficient to observe that for every $r> R$ and every $x_0\in\mathbb{R}^N$, the set $\overline{B_r(x_0)}\setminus \dot B_R$ contains at least two distinct points. By Lemma \ref{lm:points}, this implies that 
\[
\begin{split}
\mathrm{cap}_p(\overline{B_r(x_0)}\setminus \dot B_R;B_{2r}(x_0))&>\mathrm{cap}_p(\{0\};B_{2r})\\
&=\gamma_0\,\mathrm{cap}_p(\overline{B_r};B_{2r})=\gamma_0\,\mathrm{cap}_p(\overline{B_r(x_0)};B_{2r}(x_0)),
\end{split}
\] 
that is any ball with radius $r> R$ is not $(p,\gamma_0)-$negligible. This gives $R_{p,\gamma_0}(\dot B_R)=R$, as claimed.
\end{proof}

\appendix 

\section{Some counter-examples}
\label{sec:A}

\begin{example}[Failure for $\gamma=0$]
\label{exa:gamma0}
For $1\le p\le N$ and an open set $\Omega\subseteq\mathbb{R}^N$, we introduce the quantity
\[
\mathfrak{R}_{\Omega}:=R_{p,0}(\Omega)=\sup\Big\{r>0\, :\, \exists x_0\in\mathbb{R}^N\ \text{such that}\ \mathrm{cap}_p\left(\overline{B_r(x_0)}\setminus\Omega;B_{2r}(x_0)\right)=0\Big\}.
\]
This may appear as the natural capacitary extension of the usual inradius. However, in this section, we will give an example showing that this notion is not strong enough to permit having the uniform lower bound
\[
\lambda_p(\Omega)\ge C\,\left(\frac{1}{\mathfrak{R}_{\Omega}}\right)^p,
\]
for every $\Omega\subseteq\mathbb{R}^N$ open set. Indeed, 
for every $0<\varepsilon<1/4$, we introduce the periodically perforated set
\[
\Omega_\varepsilon= \mathbb{R}^N \setminus \bigcup_{\mathbf{i}\in\mathbb{Z}^N} \overline{B_\varepsilon(\mathbf{i})}.
\] 
We claim that 
\begin{equation}
\label{gammazzero}
\lim_{\varepsilon\searrow 0} \lambda_p(\Omega_\varepsilon)=0\qquad \text{while}\qquad \mathfrak{R}_{\Omega_\varepsilon}\le \frac{\sqrt{N}}{2},\qquad \text{for every}\ 0<\varepsilon<\frac{1}{4}.
\end{equation}
We first observe that the usual inradius of $\Omega_\varepsilon$ is uniformly bounded, that is
\[
r_{\Omega_\varepsilon}\le \frac{\sqrt{N}}{2},\qquad \text{for every}\ 0<\varepsilon<\frac{1}{4}.
\]
In particular, for every ball $B_r(x_0)$ with $r>\sqrt{N}/2$ we have 
\[
B_r(x_0)\cap \left(\bigcup_{\mathbf{i}\in\mathbb{Z}^N} \overline{B_\varepsilon(\mathbf{i})}\right)\not=\emptyset.
\]
More precisely, let $\mathbf{i}_0\in\mathbb{Z}^N$ be such that 
\[
|x_0-\mathbf{i}_0|=\mathrm{dist}(x_0,\mathbb{Z}^N),
\]
this distance does not exceed $\sqrt{N}/2$. Consequently, we have $\mathbf{i}_0\in B_r(x_0)$ and thus
\[
|B_r(x_0)\setminus\Omega_\varepsilon|\ge |B_r(x_0)\cap \overline{B_\varepsilon(\mathbf{i}_0)}|>0.
\]
By the properties of capacity (see equations \cite[(2.2.10) \& (2.2.11) pag. 148]{Maz}), we can infer that 
\[
\mathrm{cap}_p\left(\overline{B_r(x_0)} \setminus \Omega_\varepsilon; B_{2r}(x_0)\right)>0.
\]
Thus, for every $r>\sqrt{N}/2$, we have that $\overline{B_r(x_0)} \setminus \Omega$ has positive relative $p-$capacity and, according to the definition, we obtain
\[
\mathfrak{R}_{\Omega_\varepsilon}\le \frac{\sqrt{N}}{2},\qquad \text{for every}\ 0<\varepsilon<\frac{1}{4},
\]
as well. In order to conclude, we need to prove the first property in \eqref{gammazzero}. It is not difficult to see that 
\[
\lambda_p(\Omega_\varepsilon)=\inf_{u\in \mathrm{Lip}(\overline{Q_{1/2}})} \left\{\int_{Q_{1/2}} |\nabla u|^p\,dx\, :\, \|u\|_{L^p(Q_{1/2})}=1,\, u=0\ \text{on}\ \overline{B_\varepsilon}\right\}.
\]
It is sufficient to proceed as in the proof of \cite[Lemma 4.4]{BozBra}, for example. In particular, we take $\varphi\in \mathrm{Lip}_0(B_{1/2})$ such that $\varphi=1$ on $\overline{B_{\varepsilon}}$ and $0\le \varphi\le 1$, extended by $0$ to $\overline{Q_{1/2}}\setminus B_{1/2}$. By using the test function $u=(1-\varphi)/\|1-\varphi\|_{L^q(Q_{1/2})}$, we get
\[
\lambda_p(\Omega_\varepsilon)\le \frac{\displaystyle\int_{B_{1/2}}|\nabla\varphi|^p\,dx}{\displaystyle \int_{Q_{1/2}}(1-\varphi)^p\,dx}\le \frac{\displaystyle\int_{B_{1/2}}|\nabla\varphi|^p\,dx}{|Q_{1/2}\setminus B_{1/2}|}.
\]
Thanks to the arbitrariness of $\varphi$ and recalling formula \eqref{capalt} from Remark \ref{oss:altrefunzioni}, we obtain
\[
\lambda_p(\Omega_\varepsilon)\le \frac{\mathrm{cap}_p(\overline{B_\varepsilon};B_{1/2})}{|Q_{1/2}\setminus B_{1/2}|}.
\]	
By using \eqref{cappalla0}, \eqref{cappalla} and \eqref{cappallaconf}, the previous estimate finally implies \eqref{gammazzero}.
	%contradicting the definition of $r_{\Omega, 0}$, and proving that \[
	%r_{\Omega} = r_{\Omega, 0},
	%\]
	%for this choice of $\Omega$
%Indeed, let $B_r(x)$ be a ball realizing $$r = r_{\Omega} = \frac{\sqrt{N}}{2}.$$ Then, by definition of inradius, any ball $B^{'}_{\rho}$ such that \[
%	B_r(x) \subseteq B^{'}_{\rho},
%	\]
%	contains at least one point $ x^* \in \varepsilon \mathbb{Z}^N$. Thus, we also have  \[
%	V \subseteq B^{'}_{\rho} \setminus \Omega,
%	\]
%	for some open neighoorhood $V$ of $x^*$, in particular \[
%	|B^{'}_{\rho} \setminus \Omega| > 0.
%	\]
%	By the properties of capacity (see \cite[(2.2.10)-(2.2.11) pag. 148]{Maz}) we conclude that \[
%	\mathrm{cap}_p(B^{'}_{\rho} \setminus \Omega; B^{'}_{2 \rho}) > 0,
%	\]
%	contradicting the definition of $r_{\Omega, 0}$, and proving that \[
%	r_{\Omega} = r_{\Omega, 0},
%	\]
%	for this choice of $\Omega$.
\end{example}

\begin{example}[Failure of the lower bound for $q<p$]
\label{exa:app_lower_bound_fail}
We exhibit an open set $\Omega \subseteq \mathbb{R}^N$ such that for $1\le q<p\le N$
\[
\lambda_{p,q}(\Omega)=0\qquad \text{and}\qquad R_{p,\gamma}(\Omega)<+\infty,\ \text{for every}\ 0<\gamma<1.
\] 
This implies that the lower bound \eqref{lambda_pq_lower} cannot be true in this case.
We stick for simplicity to the case $1< p < N$, the case $p=N$ can be treated with minor modifications.  We take the slab 
\[
\Omega=\mathbb{R}^{N-1}\times(-1,1),
\] 
for which we have $\lambda_{p,q}(\Omega)=0$ for every $1\le q<p$ (see for example \cite[Proposition 6.1]{BrPini}). We need to prove that its capacitary inradius is finite, for every $0<\gamma<1$. At this aim, 
we fix $0<\gamma<1$ and take a ball $B_r(x_0)$ such that $r>1$ and 
\[
\mathrm{cap}_p(\overline{B_r(x_0)}\setminus\Omega;B_{2r}(x_0))\le \gamma\,\mathrm{cap}_p(\overline{B_r(x_0)};B_{2r}(x_0)).
\]
Thanks to the invariance of $\Omega$ by translations in directions belonging to $\{x_N=0\}$, we can suppose without loss of generality that $x_0=t\,\mathbf{e}_N$, for some $t\in\mathbb{R}$. Thus, we have
\begin{equation}
	\label{1}
	\mathrm{cap}_p(\overline{B_r(t\,\mathbf{e}_N)}\setminus\Omega;B_{2r}(t\,\mathbf{e}_N))\le \gamma\,r^{N-p}\,\mathrm{cap}_p(\overline{B_1};B_{2}),
\end{equation}
where we also used Remark \ref{oss:scaling}.
By using \cite[(2.2.10)]{Maz}, we get
\[
\begin{split}
\mathrm{cap}_p(\overline{B_r(t\,\mathbf{e}_N)}\setminus\Omega;B_{2r}(t\,\mathbf{e}_N))&\ge (N\,\omega_N)^\frac{p}{N}\,N^\frac{N-p}{N}\,\left(\frac{N-p}{p-1}\right)^{p-1}\\
&\times\left||B_{2r}(t\,\mathbf{e}_N)|^\frac{p-N}{N\,(p-1)}-|B_r(t\,\mathbf{e}_N)\setminus\Omega|^\frac{p-N}{N\,(p-1)}\right|^{1-p}.
\end{split}
\]
The expression on the right-hand side can be simplified: indeed, if we introduce $r^*$ the radius such that
\[
|B_{r^*}(t\,\mathbf{e}_N)|=|B_r(t\,\mathbf{e}_N)\setminus\Omega|\qquad \text{that is}\qquad r^*=\left(\frac{|B_r(t\,\mathbf{e}_N)\setminus\Omega|}{\omega_N}\right)^\frac{1}{N}.
\]
and recall \eqref{cappalla}, one can see that it coincides with
\[
\mathrm{cap}_p(\overline{B_{r^*}(t\,\mathbf{e}_N)};B_{2r}(t\,\mathbf{e}_N)).
\]
Accordingly, we obtain
\begin{equation}
	\label{2}
	\mathrm{cap}_p(\overline{B_r(t\,\mathbf{e}_N)}\setminus\Omega;B_{2r}(t\,\mathbf{e}_N))\ge \mathrm{cap}_p(\overline{B_{r^*}(t\,\mathbf{e}_N)};B_{2r}(t\,\mathbf{e}_N)).
\end{equation}
The volume of $B_r(t\,\mathbf{e}_N)\setminus\Omega$ can be uniformly bounded from below. Indeed, observe at first that if we set 
\[
\Omega_m=\prod_{i=1}^{N-1}(-m,m)\times(-1,1),
\] 
then
\[
%\begin{equation} \label{slab_finite}
\overline{B_r(t\,\mathbf{e}_N)}\setminus\Omega=\overline{B_r(t\,\mathbf{e}_N)}\setminus\Omega_m,\qquad \text{for every}\ m>r. 
%\end{equation}
\]
Then, as a consequence of \cite[Lemma 3.13]{BraMag}, for every $m>r$ the function
\[
t\mapsto |B_r(t\,\mathbf{e}_N)\setminus\Omega_m|=|B_r(t\,\mathbf{e}_N)|-|B_r(t\,\mathbf{e}_N)\cap\Omega_m|,
\]
attains its minimum for $t=0$ (see Figure \ref{fig:pallastriscia}).
\begin{figure}
	\includegraphics[scale=.3]{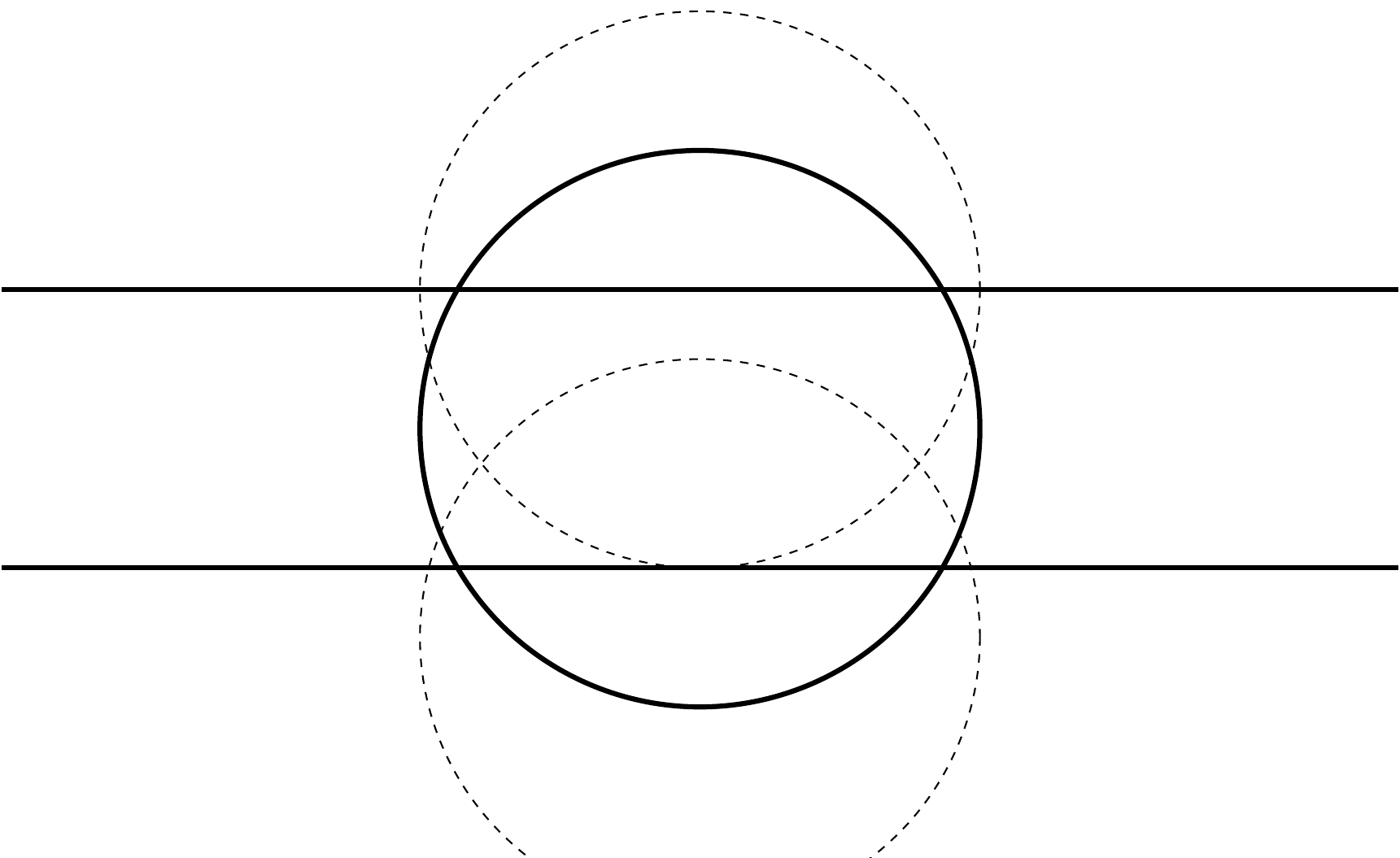}
	\caption{The ball in bold line maximizes the volume of the intersection with the slab $\Omega$.}
	\label{fig:pallastriscia}
\end{figure} 
In other words, this volume is minimal if the ball and $\Omega_m$ are concentric. We also observe that such a minimal value is given by
\[
\begin{split}
	|B_r\setminus\Omega_m|=|B_r\setminus \Omega|&=2\,\omega_{N-1}\,\int_{1}^r (r^2-z^2)^\frac{N-1}{2}\,dz\\
	&=2\,\omega_{N-1}\,r^{N}\,\int_{\arcsin\frac{1}{r}}^{\frac{\pi}{2}} \cos^N t\,dt:=r^N\,\varphi_N(r).
\end{split}
\]
In conclusion, we get that 
\begin{equation}\label{raggio-polya-szego}
%\[
r^*\ge r\,\left(\frac{\varphi_N(r)}{\omega_N}\right)^\frac{1}{N}=:r\,\Phi_N(r).
%\]
\end{equation}
From \eqref{1}, \eqref{2}, the monotonicity of the capacity with respect to the set inclusion, the lower bound on $r^*$ and again the scaling relations for the capacity, we get
\begin{equation}
\label{mipiacelaphi}
\big(\Phi_N(r)\big)^{N-p}\,\mathrm{cap}_p(\overline{B_1};B_{2/\Phi_N(r)})\le \gamma\,\mathrm{cap}_p(\overline{B_1};B_{2}).
\end{equation}
This relation must be satisfied by every radius $r>1$, such that $\overline{B_r(x_0)}\setminus\Omega$ is $(p,\gamma)-$negligible. 
\par
By recalling \eqref{cappalla}, the previous inequality is equivalent to
\[
\big(\Phi_N(r)\big)^{N-p}\,\left(1-\left(\dfrac{1}{2}\right)^\frac{N-p}{p-1}\right)^{p-1}\le \gamma\,\left(1-\left(\dfrac{\Phi_N(r)}{2}\right)^\frac{N-p}{p-1}\right)^{p-1}.
\]
With simple algebraic manipulations, we get that for every admissible radius $r$, we must have
%\[
%\big(\Phi_N(r)\big)^\frac{N-p}{p-1}\,\left(1-\left(\dfrac{1}{2}\right)^\frac{N-p}{p-1}\right)\le \gamma^\frac{1}{p-1}\,\left(1-\left(\dfrac{\Phi_N(r)}{2}\right)^\frac{N-p}{p-1}\right)
%\]
\[
\Phi_N(r)\le \left(\frac{2^\frac{N-p}{p-1}\,\gamma^\frac{1}{p-1}}{2^\frac{N-p}{p-1}-1+\gamma^\frac{1}{p-1}}\right)^\frac{p-1}{N-p}.
\]
Observe that the right-hand side is strictly smaller than $1$. Moreover, by construction the function $r\mapsto \Phi_N(r)$ is continuous monotone increasing, with 
\begin{equation}\label{limit_beahviour_Phi}
%\[
\lim_{r\searrow 1} \Phi_N(r)=0\qquad \text{and}\qquad \lim_{r\nearrow +\infty} \Phi_N(r)=1.
%\]
\end{equation}
This implies that there exists a finite radius $r_\gamma>1$ such that 
\[
\Phi_N(r_\gamma)=\left(\frac{2^\frac{N-p}{p-1}\,\gamma^\frac{1}{p-1}}{2^\frac{N-p}{p-1}-1+\gamma^\frac{1}{p-1}}\right)^\frac{p-1}{N-p},
\]
and that every ball with radius $r>r_\gamma$ violates the previous conditions, i.e. it {\it is not} $(p,\gamma)-$negligibile. This finally proves that 
\[
R_{p,\gamma}(\Omega)\le r_\gamma<+\infty,
\]
as claimed.
\end{example}

\begin{example}[Degeneration for $\gamma\nearrow 1$]
\label{exa:poliquin}
We maintain the same notation as in Example \ref{exa:app_lower_bound_fail} and take again $\Omega = \mathbb{R}^{N-1} \times (-1,1)$. Since this set is bounded in the direction $\mathbf{e}_N$, we have $\lambda_p(\Omega)>0$.
We claim that 
%\[
\begin{equation}
	\label{raggiovaffa}
	\lim_{\gamma\nearrow 1} R_{p,\gamma}(\Omega)=+\infty.
\end{equation}
%\]
This proves that an upper bound of the type
\[
\lambda_{p}(\Omega)\le \widetilde{C}_{N,p,\gamma}\,\left(\frac{1}{R_{p,\gamma}(\Omega)}\right)^p,
\] 
with $\widetilde{C}_{N,p,\gamma}$ staying bounded for $\gamma$ converging to $1$, {\it can not be true}. %differently from what claimed by Poliquin in \cite[Theorem 3.1]{Po2}. This shows that there is some crucial flaw\footnote{More precisely, the mistake is hidden in \cite[Lemma 5.3]{Po2}: the statement of point 2.\,of this lemma, which is claimed to be taken from \cite[Theorem 14.1.2]{Maz}, is actually wrong. The statement of \cite[Theorem 14.1.2]{Maz} has been probably misunderstood by the author of \cite{Po2} and incorrectly reported.} in the proof by Poliquin.
\par
In order to show \eqref{raggiovaffa}, we first recall that the function
\[
\gamma \mapsto R_{p,\gamma}(\Omega), \qquad \text{with}\ \gamma \in [0, 1),
\]
is monotone non-decreasing. %for every open set $\Omega \subseteq \mathbb{R}^N$. 
Thus, the limit in \eqref{raggiovaffa} exists. For every $r > 1$, we set 
\[
\gamma_r = \dfrac{\mathrm{cap}_p(\overline{B_r}\setminus \Omega; B_{2r})}{\mathrm{cap}_p(\overline{B_r};B_{2r})}<1,
\]
thus $\overline{B_r}\setminus \Omega$ is $\gamma_r-$negligible, obviously. Accordingly, we get from \eqref{mipiacelaphi}
\[
\big(\Phi_N(r)\big)^{N-p}\,\frac{\mathrm{cap}_p(\overline{B_1};B_{2/\Phi_N(r)})}{\mathrm{cap}_p(\overline{B_1};B_{2})}\le \gamma_r.
\]
By recalling \eqref{limit_beahviour_Phi}, from the previous inequality we get
\[
\lim_{r \nearrow +\infty} \gamma_r = 1.
\]
In particular, by monotonicity we have
\[
\lim_{\gamma\nearrow 1} R_{p,\gamma}(\Omega)=	\lim_{r \nearrow +\infty} R_{p, \gamma_r}(\Omega).
\]
On the other hand, since the set $\overline{B_{r}} \setminus \Omega$ is $\gamma_r-$negligible, we must have 
\[
R_{p,\gamma_r}(\Omega)\ge r.
\]	
By joining the last two facts, we finally obtain \eqref{raggiovaffa}.
\end{example}
\medskip

\end{document}